\newtheorem{thm}{Theorem}[section]
\newtheorem{lem}[thm]{Lemma}	
\newtheorem{pro}[thm]{Proposition}
\newtheorem{cor}[thm]{Corollary}
\newtheorem{dfn}[thm]{Definition}
\newtheorem{eg}[thm]{Example}
\newtheorem{result}[thm]{Result}
\newcommand{\bea}{\begin{eqnarray*}}
\newcommand{\eea}{\end{eqnarray*}}
\newcommand{\ben}{\begin{enumerate}}
\newcommand{\een}{\end{enumerate}}
\newcommand{\bi}{\begin{itemize}}
\newcommand{\ei}{\end{itemize}}
\newenvironment{proof}{\noindent \textbf{Proof.}\hspace{.7em}}
                   {\hfill $\Box$                    \vspace{10pt}}
\newcommand{\mc}{\mathcal}
\newcommand{\Z}{\mathbb Z}
\newcommand{\R}{\mathcal R}
\newcommand{\Q}{\mathcal Q}
\newcommand{\C}{\mathcal C}
\newcommand{\I}{\mathcal I}
\newcommand{\pr}{^{\prime}}
\newcommand{\Su}{\mc S}
\newcommand{\IS}{\mc IS}
\newcommand{\Pc}{\mc P}
\newcommand{\D}{\mc D}
\newcommand{\M}{\mc M}
\DeclareMathOperator{\Dom}{Dom}
\DeclareMathOperator{\Img}{Im}
\begin{document}

\title{Constellations with range and IS-categories}
\author{Victoria Gould and Tim Stokes}
\date{}

\maketitle

\begin{abstract}
Constellations are asymmetric generalisations of categories.  Although they are not required to possess a notion of range, many natural examples do. These include  commonly occurring constellations  related to concrete categories (since they model surjective morphisms), and also others arising from quite different sources, including from well-studied classes of semigroups. We show how constellations with a well-behaved range operation are nothing but ordered categories with restrictions.  We characterise abstractly those categories that are canonical extensions of constellations with range, as so-called IS-categories.  Such categories contain distinguished subcategories of insertions (which are monomorphisms) and surjections (in general different to the epimorphisms) such that each morphism admits a unique factorisation into a surjection followed by an insertion. Most familiar concrete categories are IS-categories, and we show how some of the well-known properties of these categories arise from the fact that they are IS-categories.  For appropriate choices of morphisms in each, the category of IS-categories is shown to be equivalent to the category of constellations with range.  
\end{abstract}

\noindent{\bf Keywords:} Constellation, category.
\medskip

\noindent{{\bf 2020 Mathematics Subject Classification:} Primary: 18A32, 18B99, Secondary: 20M75

\section{Introduction}\label{sec:intro}

Constellations are ``one-sided" versions of categories, in which there is a notion of domain but in general no notion of range.  They were first defined by Gould and Hollings in \cite{constell}, where the purpose was to obtain a variant of the so-called Ehresmann-Schein-Nambooripad (ESN) Theorem relating inverse semigroups to inductive groupoids (see \cite{Ehres} and \cite{scheinESN}) which would apply to left restriction semigroups.  

Constellations were subsequently studied further for their own sake in \cite{constgen}, where  the assumption that constellations were small (set-based) was dropped, something we do not require in the current work either.   It was shown that, given a constellation $\Q$, a category $C(\Q)$ can be obtained by means of a straightforward extension process that can be viewed as a one-sided variant of idempotent completion.  For example, the category of sets may be obtained via this extension process from the constellation of sets, the latter consisting of surjective functions with composition of two functions defined if and only if the image of the first is contained in the domain of the second.  

In fact, it is typical that a concrete category $\C$ can be viewed in this way as a canonical extension $C(\Q)$ of the constellation $\Q$ consisting of the surjective morphisms.  However, the surjective morphisms in these concrete categories generally form a subcategory (equal to, or at least contained in, the subcategory of epimorphisms), and hence come equipped with a notion of range.  When viewed as a constellation with such a range operation, a number of properties are typically satisfied.  In this paper, we consider such constellations with range, how they relate to categories in general, and how they may be used to enhance category theory itself by exploiting the canonical extension concept. 

Following some preliminaries in Section \ref{prelims}, we turn our attention to the notion of a constellation with range in Section \ref{secranconst}, and consider examples and non-examples in Section \ref{examples}.  In Section \ref{correspsec}, it is shown that constellations with range may  be equivalently viewed as ordered categories with restriction.  We then turn our attention for the remainder to canonical extensions of constellations with range.  The basic idea is reviewed in Section \ref{canonext}, and some consequences of assuming that $\Q$ has range are collated.  Then in Section \ref{IScat}, the concept of an IS-category is defined and developed as an abstraction of $C(\Q)$ where $\Q$ is a constellation with range; these have some formal similarity to categories equipped with a factorization system.  In Section \ref{apply}, some applications are given, showing how working in IS-categories can help explain the behaviour of many of the familiar concrete categories of mathematics, with the notions of epimorphisms, subobjects, equalisers and factorization systems all considered. In Section \ref{corresp2}, it is shown that IS-categories are up to isomorphism exactly canonical extensions of constellations with range, and a suitable categorical equivalence is obtained between constellations with range and IS-categories.  Finally, some open questions are considered in Section \ref{open}.

\section{Algebraic Preliminaries}  \label{prelims}

Throughout, we generally write functions on the right of their arguments rather than the left, so ``$xf$" rather than ``$f(x)$".  Correspondingly, we write function compositions left to right, so that ``$fg$" is ``first $f$, then $g$", rather than the other way around.  An exception to this is unary operation application; if $D$ is a unary operation on a set $S$, we write $D(s)$ for $s\in S$ rather than $sD$.

Let $\C$ be a class (usually a set) with a partial binary operation.  Recall that $e\in \C$ is a \emph{right identity} if it is such that, for all $x\in \C$, if $x\cdot e$ is defined then it equals $x$; left identities are defined dually.  An \emph{identity} is both a left and right identity.  (Note we are not assuming that $e\cdot e$ exists in any of these cases.)

Following \cite{constgen} and \cite{lawson1} where an object-free formulation was also used, recall that a \emph{category} $(\C,\circ)$, often just denoted $\C$, is a class $\C$ with a partial binary operation $\circ$ satisfying the following:
\ben[label=\textup{(C\arabic*)}]
\item $x\circ(y\circ z)$ exists if and only if $(x\circ y)\circ z$ exists, and then the two are equal;
\item if $x\circ y$ and $y\circ z$ exist then so does $x\circ(y\circ z)$;
\item for each $x\in \C$, there are identities $e,f$ such that $e\circ x$ and $x\circ f$ exist.
\een

The identities $e,f$ in (C3) are easily seen to be unique,  
%: for if $e,e\pr$ are identities with $e\circ x=x$ and $e\pr\circ x=x$, then $e\circ (e\pr\circ x)$ exists, hence so does $(e\circ e\pr)\circ x$, whence $e\circ e\pr$ does, and then $e=e\circ e\pr=e\pr$ since both are identities.  (Hence $e\circ e$ exists and equals $e$.)  Similarly for $f$.  
and  we write $D(x)=e$ and $R(x)=f$.  (Note that this is the opposite of the convention often used, and corresponds to the fact that we view a composition of functions $fg$ as ``first $f$, then $g$", discussed earlier.)

It also follows easily that for every identity $e$ we have $e\circ e=e$ and $D(e)=e=R(e)$.  Moreover, the collection of domain elements $D(x)$ (equivalently, range elements $R(x)$) is precisely the collection of identities in the category.  We will often view a category as a partial algebra $(\C,\circ,D,R)$.  If $x,y$ are elements of a category $\C$, then the product $x\circ y$ exists if and only if $R(x)=D(y)$, and if $x\circ y$ exists then $D(x\circ y)=D(x)$ and $R(x\circ y)=R(y)$.

 Let $\C$ be a category.  As usual, we say that $s\in \C$ is an {\em epimorphism} if for all $x,y\in \C$, if $s\circ x=s\circ y$ then $x=y$, and that it is a  {\em monomorphism} if  for all $x,y\in \C$, if $x\circ s=y\circ s$ then $x=y$.  (Recall that we are reading compositions left to right.)

Following an earlier equivalent definition given in \cite{constell}, we make use of a result in \cite{constgen} and define a \emph{constellation}  to be a class $\Q$ equipped with a partial binary operation $\cdot$ satisfying the following:
\ben[label=\textup{(Q\arabic*)}]
\item if $x\cdot(y\cdot z)$ exists then $(x\cdot y)\cdot z$ exists, and then the two are equal;
\item if $x\cdot y$ and $y\cdot z$ exist then so does $x\cdot(y\cdot z)$;
\item for each $x\in P$, there is a unique right identity $e$ such that $e\cdot x=x$.
\een
Since $e$ in (Q3) is unique given $x\in \Q$, we call it $D(x)$.  
It follows that $D(\Q)=\{D(s)\mid s\in \Q\}$ is the set of right identities of $\Q$, a set we call the {\em projections} of $\Q$.  We adopt the usual convention of referring to the constellation $(\Q,\cdot)$ simply as $\Q$ if there is no ambiguity.  However, because $D$ can be viewed as a unary operation, we also often view constellations as partial algebras $(\Q,\cdot\, ,D)$.  As shown in \cite{constgen}, every category becomes a constellation when the operation $R$ is ignored.  We say a constellation $(\Q,\cdot)$ is {\em small} if $\Q$ is a set.

The following are some useful basic facts and definitions about constellations, to be found in \cite{constell} or \cite{constgen}.

\begin{result}  \label{2p3}
For elements $s,t$ of the constellation $\Q$, $s\cdot t$ exists if and only if $s\cdot D(t)$ exists, and then $D(s\cdot t)=D(s)$.
\end{result}

A constellation is {\em categorial} if it arises from a category as a reduct (obtained by dropping $R$). 

\begin{result}  \label{catunique}
Let $\Q$ be a constellation.  Then $\Q$ is categorial if and only if for all $s\in \Q$ there is a unique $e\in D(\Q)$ such that $s\cdot e$ exists, and then $R(s)=e$ when $\Q$ is viewed as a category.
\end{result}

The constellation $\Q$ is {\em normal} if $\mbox{for all }e,f\in D(\Q)$, \mbox{ if $e\cdot f$ and $f\cdot e$ exist, then } $e=f$.  Our next result highlights the fact that constellations are intimately associated with orders.

\begin{result}  \label{parnormal}
If $\Q$ is a constellation, define the relation $\leq$ on $D(\Q)$ by $e\leq f$ if and only if $e\cdot f$ exists.  Then $\leq$ is a quasiorder we call the {\em standard quasiorder} on $D(\Q)$.  Defining $s\leq t$ for $s,t\in \Q$ whenever $s=e\cdot t$ for some $e\in D(\Q)$ (equivalently, $s=D(s)\cdot t$) makes $\leq$ a quasiorder on all of $\Q$ that agrees with the standard quasiorder on $D(\Q)$, called the {\em natural quasiorder} on $\Q$.  In both these cases, the quasiorder is a partial order if and only if $\Q$ is normal, and then we use ``order" rather than ``quasiorder".
\end{result}

An important example of a small normal constellation, introduced in \cite{constell}, is ${\mc C}_X$, consisting of partial functions on the set $X$, in which $s\cdot t$ is the usual composite of $s$ followed by $t$ provided $\Img(s)\subseteq \Dom(t)$, and undefined otherwise, and $D(s)$ is the restriction of the identity map on $X$ to $\Dom(s)$.  

A {\em subconstellation} $Q$ of a constellation $\Q$ is a subset of $\Q$ that is closed under the constellation product wherever it is defined, and closed under $D$; $Q$ is then a constellation in its own right as shown in \cite{constgen}, where it was also shown that every small normal constellation embeds as a subconstellation in the (normal) constellation $\C_X$ for some choice of $X$.

For constellations, the notion of morphism is as follows.  If $\Q_1,\Q_2$ are constellations, a function $\rho:\Q_1\rightarrow \Q_2$ is a {\em radiant} if for all $s,t\in \Q_1$ we have $D(s\rho)=D(s)\rho$, and if  $s\cdot t$ exists then so does $(s\rho)\cdot (t\rho)$ and indeed $(s\cdot t)\rho=(s\rho)\cdot (t\rho)$.  As observed in \cite{constell}, the class of constellations is a category in which the morphisms are radiants.  

As in \cite{constgen}, we say that a radiant $\rho\colon \mathcal{P}\rightarrow \Q$ is \emph{full} if for all $s,t\in \mathcal{P}$ for which $(s\rho)\cdot (t\rho)$ exists and is in the image of $\rho$, there are $s\pr,t\pr\in \mathcal{P}$ such that $s\rho=s\pr\rho$, $t\rho=t\pr\rho$ and $s\pr\cdot t\pr$ exists in $\mathcal{P}$.  

There is also a notion of congruence for constellations. 

\begin{dfn}  \label{congdef}
Let $\delta$ be an equivalence relation on the constellation $\Q$.  
As in \cite{constgen}, $\delta$ is a \emph{congruence} if, whenever $(s_1,s_2)\in \delta$ then $(D(s_1),D(s_2))\in \delta$ and if also $(t_1,t_2)\in \delta$, and both $s_1\cdot t_1$ and $s_2\cdot t_2$ are defined, then $(s_1\cdot t_1, s_2\cdot t_2)\in \delta$; it is a \emph{strong congruence} if it also has the property that $s_1\cdot t_1$ is defined if and only if $s_2\cdot t_2$ is defined, for all $(s_1,t_1),(s_2,t_2)\in \delta$. 
\end{dfn}

If $\Q_1,\Q_2$ are constellations, the kernel $\delta$ of the radiant $\rho\colon \Q_1\rightarrow \Q_2$ (defined to be an equivalence relation on $\Q_1$ in the usual way) is a congruence but need not be strong, as noted in \cite{constgen}.

The following comes from Propositions $2.27$ and $2.28$ in \cite{constgen}.

\begin{result}  \label{canonmap}
If $\C$ is a category with $\delta$ a constellation congruence on it such that $\C/\delta$ is a constellation, then the canonical map $\rho:\C\rightarrow \C/\delta$ is a full surjective radiant.  Conversely, if $\Q$ is a constellation and $\rho:\C\rightarrow \Q$ is a full surjective radiant, then $\C/\ker(\rho)\cong \Q$.  
\end{result}

So the congruences on a category $\C$ that give constellation quotients are precisely the kernels of full surjective radiants mapping from $\C$ to a constellation.

In \cite{constgen}, the notion of the canonical extension of a constellation was developed.  Given a constellation $\Q$, it is the category constructed as follows:
 $$C(\Q)=\{(s,e)\mid s\in \Q, e\in D(\Q),\exists s\cdot e\},$$
where we define  \[D((s,e))=(D(s),D(s))\mbox{ and }R((s,e))=(e,e),\]
and where $R((s,e))=D((t,f))$,  (that is, $e=D(t)$), we set \[(s,e)\circ (t,f)=(s\cdot t,f).\] That $(C(\Q),\circ\, ,D,R)$ is a  category is shown in \cite{constgen}.

A constellation $\Q$ is {\em composable} if for every $a\in \Q$ there exists $b\in \Q$ (equivalently, there exists $b\in D(\Q)$) such that $a\cdot b$ exists. 

The following is Proposition 3.2 of \cite{constgen}.

\begin{result} \label{surad}
If $\Q$ is a composable constellation, the mapping $\rho: C(\Q)\rightarrow \Q$ given by $(s,e)\rho=s$ is a full surjective radiant.
\end{result}

Associated to the canonical extension construction is the notion of a {\em canonical congruence} $\delta$ on a constellation $\Q$: it is defined in \cite{constgen} to be a congruence as in Definition \ref{congdef} additionally satisfying the following: 
\bi
\item $\delta$ separates projections: for all $e,f\in D(\Q)$, if $(e,f)\in\delta$ then $e=f$;
\item if $(a,b)\in \delta$ and $a\cdot e$ and $b\cdot e$ both exist for some $e\in D(\mathcal{P})$, then $a=b$.
\ei

It was shown in \cite[Corollary 3.7]{constgen}  that if $\Q$ is a category, then an equivalence relation $\delta$ on the constellation reduct of $\Q$ is a canonical congruence if and only if 
\bi
\item if $(a,b)\in \delta$ then $D(a)=D(b)$;
\item if $(a,b)\in \delta$ and $R(a)=R(b)$, then $a=b$;
\item if $(b,c)\in \delta$ and $a\cdot b$ (and hence $a\cdot c$) exists, then $(a\cdot b,a\cdot c)\in\delta$. 
\ei

The next result is a combination of Proposition $3.9$ and Theorem $3.10$ in \cite{constgen}.

\begin{result}  \label{factext}
If $\mathcal{K}$ is a category with $\delta$ a canonical congruence on it, then $\mathcal{K}/\delta$ is a constellation, and $\mathcal{K}\cong C(\mathcal{K}/\delta)$, via an isomorphism given by $s\mapsto ([s],[R(s)])$, where $[x]$ denotes the $\delta$-class containing $s\in \mathcal{K}$.
\end{result}

\section{Range in a constellation}  \label{secranconst}

In the examples of constellations considered in \cite{constgen}, most had a notion of range, which in all cases had the property that the range $R(s)$ of an element $s$ was an element of $D(\Q)$ with the property that $s\cdot R(s)$ exists, and $R(s)$ was the smallest $e\in D(\Q)$ with respect to the standard quasiorder on $D(\Q)$ for which $s\cdot e$ exists.  For example, for the case of ${\mc C}_X$, $R(s)$ is the identity map on $X$ restricted to the image of $s$. In such cases, $\Q$ is such that for all $s\in \Q$, the set 
$$s_D=\{e\in D(\Q)\mid s\cdot e\mbox{ exists, and for all }f\in D(\Q), s\cdot f\mbox{ exists implies } e\leq f\}$$ contains a single element.  (In a general constellation, $s_D$ could be empty, or even have more than one element if the standard quasiorder on $D(\Q)$ is not a partial order, depending on the choice of $s$.)

\begin{dfn}
A {\em constellation with pre-range} is a constellation $\Q$ in which for all $s\in \Q$ the set $s_D$ has a single element (which we call $R(s)$).
\end{dfn}  

Again, we often view a constellation with pre-range as a partial binary algebra having two unary operations, $D$ and $R$. 
The next lemma follows immediately from the very definition of $\leq$ in a constellation.

\begin{pro}  \label{Rlaws}
Suppose $\Q$ is a constellation with pre-range.  Then 
\ben[label=\textup{(R\arabic*)}]
\item $D(R(s))=R(s)$ for all $s\in \Q$;
\item $s\cdot R(s)$ exists for all $s\in \Q$;
\item if $s\cdot t$ exists then so does $R(s)\cdot t$, for all $s,t\in \Q$;
\item $\Q$ is normal.
\een
\end{pro}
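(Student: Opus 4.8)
The plan is to read (R1)--(R3) straight off the definition of the set $s_D$ together with Result~\ref{2p3}, and to derive normality (R4) from the requirement that $s_D$ be a \emph{singleton} for \emph{every} element, not just the ones under consideration. First I would record two standard facts about a projection $e\in D(\Q)$, namely $e\cdot e=e$ and $D(e)=e$: writing $e=D(s)$, the product $e\cdot s=D(s)\cdot s=s$ exists, so by Result~\ref{2p3} the product $e\cdot e=e\cdot D(s)$ exists and equals $e$ since $e$ is a right identity, and then $e$ itself witnesses that $D(e)=e$. Now (R1) is immediate, since $R(s)\in D(\Q)$. (R2) is immediate too, since $R(s)$ is by definition the unique member of $s_D$, and membership of $s_D$ includes the requirement that $s\cdot R(s)$ be defined. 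For (R3), suppose $s\cdot t$ is defined; then $s\cdot D(t)$ is defined by Result~\ref{2p3}, so $D(t)$ is one of the projections $f$ with $s\cdot f$ defined, and the minimality clause in the definition of $s_D$ forces $R(s)\leq D(t)$, that is, $R(s)\cdot D(t)$ is defined; a second application of Result~\ref{2p3}, this time with $R(s)$ playing the role of $s$, then gives that $R(s)\cdot t$ is defined.

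For (R4), let $e,f\in D(\Q)$ with both $e\cdot f$ and $f\cdot e$ defined, aiming to show $e=f$. On the one hand $e\in e_D$: the product $e\cdot e=e$ is defined, and the minimality clause holds vacuously, since for a projection ``$e\cdot g$ is defined'' is by definition ``$e\leq g$''. On the other hand I claim $f\in e_D$ as well. Indeed $e\cdot f$ is defined; and given any projection $g$ with $e\cdot g$ defined, we have $f\cdot e=f$ because $e$ is a right identity, so since $f\cdot e$ and $e\cdot g$ are both defined, (Q2) yields that $f\cdot(e\cdot g)$ is defined, and hence by (Q1) $(f\cdot e)\cdot g=f\cdot g$ is defined, i.e.\ $f\leq g$. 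Thus $e_D$ contains both $e$ and $f$; as it is a singleton, $e=f$, which is precisely the normality of $\Q$.

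The whole argument is short, and the only step with any content is the verification that $f\in e_D$ in the proof of (R4); the point to watch there is to exploit that the projection $e$ is a right identity, simplifying $f\cdot e$ to $f$, before invoking the associativity axioms (Q1) and (Q2). Everything else is routine bookkeeping against Result~\ref{2p3} and the definitions, so I do not expect any genuine obstacle.
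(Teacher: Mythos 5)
Your proof is correct and follows essentially the same route as the paper's: (R1) and (R2) read off the definition of $s_D$, (R3) via two applications of Result~\ref{2p3} sandwiching the minimality of $R(s)$, and (R4) by showing $e$ and $f$ both lie in the singleton $e_D$. The only difference is cosmetic — where the paper appeals to $e\leq f\leq e$ and transitivity of the standard quasiorder to place $f$ in $e_D$, you verify the same membership directly from (Q1) and (Q2).
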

\begin{proof} 
(R1) and (R2) are immediate.   If $s\cdot t$ exists then $s\cdot D(t)$ exists by Result \ref{2p3}, so $R(s)\leq D(t)$, and so $R(s)\cdot D(t)$ exists, so $R(s)\cdot t$ exists, again by Result \ref{2p3}, which proves (R3).  
%If $e\in D(\Q)$, and if $e\cdot f$ exists for some $f\in D(\Q)$, then $e\leq f$ by definition, so $e\in e_D$, and so $R(e)=e$ giving that (R4) holds.   
Finally, if $e,f\in D(\Q)$ with $e\cdot f$ and $f\cdot e$ both existing, then $e\leq f\leq e$ and $e,f\in  e_D$ so that $e=f$ and (R4) holds.
\end{proof}

Conversely, we have the following.

\begin{pro}  \label{Rmin}
Suppose $(\Q,\cdot\, ,D)$ is a constellation with additional unary operation $R$ satisfying the laws (R1)--(R4) in Proposition \ref{Rlaws}.  Then $(\Q,\cdot\, ,D,R)$ is a constellation with pre-range.
\end{pro}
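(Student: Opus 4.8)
The plan is to show directly that for every $s\in\Q$ the set $s_D$ is exactly $\{R(s)\}$, which is precisely what it means for $(\Q,\cdot\,,D,R)$ to be a constellation with pre-range. This splits into two parts: first that $R(s)\in s_D$, and second that $s_D$ contains nothing else. I would begin by recording that $R(s)\in D(\Q)$: by (R1), $D(R(s))=R(s)$, so $R(s)$ is of the form $D(x)$ and hence a projection. This is needed merely so that $R(s)$ is a legitimate candidate for membership in $s_D$.

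For the first part, $s\cdot R(s)$ exists by (R2). It then remains to check that $R(s)$ lies below every $f\in D(\Q)$ for which $s\cdot f$ exists, with respect to the standard quasiorder $\leq$ on $D(\Q)$ (where $e\leq f$ means $e\cdot f$ exists). But if $s\cdot f$ exists, then (R3) applied with $t=f$ gives that $R(s)\cdot f$ exists, that is, $R(s)\leq f$. Hence $R(s)\in s_D$, and in particular $s_D\neq\emptyset$.

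For uniqueness, suppose $e\in s_D$ as well. Unwinding the definition of $s_D$ twice: since $e\in s_D$ while $R(s)\in D(\Q)$ and $s\cdot R(s)$ exists, the ``smallest'' clause forces $e\leq R(s)$; symmetrically, since $R(s)\in s_D$ while $e\in D(\Q)$ and $s\cdot e$ exists, we get $R(s)\leq e$. Now (R4) says $\Q$ is normal, so by Result \ref{parnormal} the standard quasiorder on $D(\Q)$ is in fact a partial order, and antisymmetry yields $e=R(s)$. Thus $s_D=\{R(s)\}$, as required.

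I do not expect a genuine obstacle: the proposition is the bookkeeping converse to Proposition \ref{Rlaws}, and each step is a one-line appeal to one of (R1)--(R4) or to the relevant definitions. The only point needing a little care is the uniqueness argument, where one must correctly apply the two-sided ``least element'' condition built into the definition of $s_D$ once with $e$ tested against $f=R(s)$ and once with $R(s)$ tested against $f=e$, and then invoke normality to pass from a quasiorder to a partial order.
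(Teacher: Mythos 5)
Your proof is correct and follows essentially the same route as the paper's: use (R1) to see $R(s)\in D(\Q)$, (R2) and (R3) to show $R(s)$ is a minimum among the projections $f$ with $s\cdot f$ defined, and (R4) together with Result \ref{parnormal} to get antisymmetry of the standard order and hence uniqueness. The only difference is that you spell out the uniqueness step explicitly where the paper compresses it into the phrase ``the (unique) smallest''.
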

\begin{proof}
For all $s\in \Q$, $R(s)\in D(\Q)$ by (R1).  (R2) shows that $R(s)$ is one $e\in D(\Q)$ for which $s\cdot e$ exists, while if also $s\cdot f$ exists for some $f\in D(\Q)$, then $R(s)\cdot f$ exists by (R3), so $R(s)\leq f$ under the standard order on $D(\Q)$, which is a partial order by (R4) and Result \ref{parnormal}.  So $R(s)$ is the (unique) smallest $e\in D(S)$ for which $s\cdot e$ exists.
\end{proof}
 
There is a kind of dual version of Result \ref{2p3} applying to constellations with pre-range.

\begin{lem} \label{Rst}
Suppose $\Q$ is a constellation with pre-range.  Then for all $s,t\in \Q$, $s\cdot t$ exists if and only if $R(s)\cdot t$ exists.  Hence for all $s,t\in \Q$,  if $s\cdot t$ exists, then $R(s\cdot t)\leq R(t)$.
\end{lem}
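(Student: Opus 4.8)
The plan is to prove the two assertions in turn. For the equivalence ``$s\cdot t$ exists if and only if $R(s)\cdot t$ exists'', the forward implication is already available: it is exactly law (R3) of Proposition \ref{Rlaws}. So all the work is in the converse.

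For the converse, I would argue as follows. Suppose $R(s)\cdot t$ exists; by Result \ref{2p3} this is equivalent to $R(s)\cdot D(t)$ existing. Since $D(t)$ is a right identity, $R(s)\cdot D(t)=R(s)$; also $s\cdot R(s)$ exists by (R2), and $s\cdot R(s)=s$ since $R(s)\in D(\Q)$ is a right identity. Now apply (Q2) to the triple $(s,R(s),D(t))$: as $s\cdot R(s)$ and $R(s)\cdot D(t)$ both exist, $s\cdot(R(s)\cdot D(t))$ exists. Feeding this into (Q1) with the same triple shows $(s\cdot R(s))\cdot D(t)$ exists, i.e. $s\cdot D(t)$ exists; one more application of Result \ref{2p3} gives that $s\cdot t$ exists, as required.

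For the final clause, assume $s\cdot t$ exists. Since $t\cdot R(t)$ exists by (R2), law (Q2) applied to $(s,t,R(t))$ gives that $s\cdot(t\cdot R(t))$ exists, and then (Q1) gives that $(s\cdot t)\cdot R(t)$ exists. Now $R(s\cdot t)$ is by definition the least element of $D(\Q)$, with respect to the standard order (a genuine partial order by normality, Proposition \ref{Rlaws}(R4), and Result \ref{parnormal}), whose right-multiplication by $s\cdot t$ is defined; since $R(t)\in D(\Q)$ has exactly this property, $R(s\cdot t)\leq R(t)$ follows. (Alternatively, one could deduce ``$(s\cdot t)\cdot R(t)$ exists'' from the first part of the lemma applied to $s\cdot t$ and $R(t)$, but the direct reassociation above is shorter.)

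I do not expect a genuine obstacle here: the proof is a matter of chaining the constellation associativity laws (Q1) and (Q2) with Result \ref{2p3} and the right-identity property of projections. The one point needing a little care is the converse of the equivalence, where one must first rewrite $R(s)\cdot D(t)$ as $R(s)$ using the right-identity property, and then use (Q1) in the ``reassociating'' direction to peel $D(t)$ back off; keeping the bracketing straight there is the only subtlety.
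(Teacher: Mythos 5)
Your proof is correct and follows essentially the same route as the paper: (R3) for the forward direction, then (R2) together with (Q2) and (Q1) to reassociate $s\cdot(R(s)\cdot t)$ into $(s\cdot R(s))\cdot t=s\cdot t$ for the converse, and the same reassociation with $R(t)$ plus minimality of $R(s\cdot t)$ for the final clause. The only difference is your detour through $D(t)$ via Result \ref{2p3}, which the paper avoids by applying (Q2) directly to the triple $(s,R(s),t)$; both are fine.
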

\begin{proof}
(R3) asserts one direction.  Conversely, suppose that $R(s)\cdot t$ exists.  Now $s\cdot R(s)$ exists by (R2), so it follows from (Q2) that $s\cdot (R(s)\cdot t)$ exists and hence by (Q1) so does $(s\cdot R(s))\cdot t=s\cdot t$.  Hence if $s\cdot t$ exists, then it equals $s\cdot (t\cdot R(t))=(s\cdot t)\cdot R(t)$, giving that $R(s\cdot t)\leq R(t)$.
\end{proof}

Next we explain why the term ``pre-range"  rather than just ``range" has been used thus far. Note that $D(s\cdot D(t))=D(s\cdot t)$ ($=D(s)$) whenever $s\cdot t$ exists.  However, in a constellation with pre-range, the corresponding condition that $R(s\cdot t)=R(R(s)\cdot t)$ whenever $s\cdot t$ exists need not hold.  

\begin{dfn}
The constellation with pre-range $(\Q,\cdot\, ,D,R)$ is a {\em constellation with range} if it satisfies {\em the right congruence condition}: $R(R(s)\cdot t)=R(s\cdot t)$ whenever $s\cdot t$ exists (which is whenever $R(s)\cdot t$ exists by Lemma \ref{Rst}).  
\end{dfn}

The name here derives from the fact that in a constellation with pre-range $\Q$, the equivalence relation $\theta$ given by $s\, {\theta}\, t$ if and only if $R(s)=R(t)$ is a right congruence (suitably defined) if and only if $\Q$ is a constellation with range.  We omit the details.

It follows easily that in a constellation with range, $R(s\cdot t)\leq R(t)$, with equality if $R(s)=D(t)$, since in that case
$R(s\cdot t)=R(R(s)\cdot t)=R(D(t)\cdot t)=R(t)$.

\begin{pro}  \label{CX} The constellation
$\C_X$ is a constellation with range, with $R(s)$ the restriction of the identity function to the range of $s\in \C_X$.
\end{pro}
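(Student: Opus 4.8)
The plan is to invoke Proposition \ref{Rmin}: since $\C_X$ is already known to be a constellation $(\C_X, \cdot\,, D)$, it suffices to show that the proposed operation $R$, sending $s \in \C_X$ to the restriction $1_{\Img(s)}$ of the identity map on $X$ to $\Img(s)$, satisfies the laws (R1)--(R4), and then to verify the right congruence condition separately. Throughout I would write $1_A$ for the identity map on $X$ restricted to a subset $A \subseteq X$, so that $D(s) = 1_{\Dom(s)}$ and the candidate range is $R(s) = 1_{\Img(s)}$; I would use repeatedly that $\Dom(1_A) = A = \Img(1_A)$, and that $s \cdot t$ exists in $\C_X$ exactly when $\Img(s) \subseteq \Dom(t)$.

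For (R1)--(R4) everything reduces to unwinding these definitions. Law (R1), $D(R(s)) = R(s)$, holds because $\Dom(1_{\Img(s)}) = \Img(s)$. Law (R2), the existence of $s \cdot R(s)$, holds because $\Img(s) \subseteq \Img(s) = \Dom(R(s))$; in fact the composite is $s$ itself. For (R3), if $s \cdot t$ exists then $\Img(s) \subseteq \Dom(t)$, and since $\Img(R(s)) = \Img(s)$ this gives $\Img(R(s)) \subseteq \Dom(t)$, so $R(s) \cdot t$ exists. Finally (R4), normality, is immediate: if $1_A \cdot 1_B$ and $1_B \cdot 1_A$ both exist then $A \subseteq B$ and $B \subseteq A$, so $A = B$ and $1_A = 1_B$. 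By Proposition \ref{Rmin}, $\C_X$ is then a constellation with pre-range whose pre-range operation is $R(s) = 1_{\Img(s)}$.

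It then remains to check the right congruence condition $R(R(s) \cdot t) = R(s \cdot t)$ whenever $s \cdot t$ exists, and here I would simply compute images. When $s \cdot t$ exists, $\Dom(s \cdot t) = \Dom(s)$ and $x(s \cdot t) = (xs)t$ for each $x \in \Dom(s)$, so $\Img(s \cdot t) = (\Img(s))t$. On the other hand $R(s) \cdot t$ is the composite of $1_{\Img(s)}$ with $t$, namely the restriction of $t$ to $\Img(s)$, whose image is again $(\Img(s))t$. Hence $R(R(s) \cdot t) = 1_{(\Img(s))t} = R(s \cdot t)$, as required.

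I do not expect any genuine obstacle here: the proof is a routine verification. The only points requiring a little care are keeping the left-to-right composition convention straight, so that $\Dom$ and $\Img$ behave as stated under the product of $\C_X$, and noticing that $\Img(R(s)) = \Img(s)$, which is what makes (R3) and the final image computation go through cleanly.
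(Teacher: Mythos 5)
Your proof is correct and follows essentially the same route as the paper: both establish the pre-range structure and then check the right congruence condition $R(R(s)\cdot t)=R(s\cdot t)$. The only difference is that you verify everything explicitly (laws (R1)--(R4) together with Proposition \ref{Rmin}, and a direct computation of images), whereas the paper treats the pre-range part as already observed and cites the known law $R(st)=R(R(s)t)$ for $\mathcal{PT}_X$; your image computation is exactly the content of that cited law, so the substance is the same.
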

\begin{proof}
We already noted that defining $R$ as above clearly makes $\C_X$ a constellation with pre-range.

Suppose $s,t\in \C_X$.  If $s\cdot t$ exists in $\C_X$, then $R(s)\cdot t$ exists by (R3), and in this case, $R(s\cdot t)=R(st)=R(R(s)t)=R(R(s)\cdot t)$ as computed in the bi-unary semigroup of partial functions on $X$ equipped with domain and range, $(\mathcal{PT}_X,\times,D,R)$ (where we omit $\times$ in the products given).  This is because the law $R(st)=R(R(s)t)$ is known to hold for $\mathcal{PT}_X$  (see \cite{ss4} for example).  %Then for all $y\in X$, the following are equivalent: $y\in ran(R(s)\cdot t)$; there exists $x\in X$ for which $(x,x)\in R(s)$ and $(x,y)\in t$; there are $u,x\in X$ such that $(u,x)\in s$ and $(x,y)\in t$; $(u,y)\in s\cdot t$ for some $u\in X$; $y\in R(s\cdot t)$. s
\end{proof}

As shown in \cite{constgen}, every normal constellation embeds in $\C_X$ for suitable $X$, hence in a constellation with range. 

Before leaving the example of $\C_X$, we note that every {\em left restriction semigroup} $S$ embeds into some $\mathcal{PT}_X$, and moving from the semigroup $S$ to the corresponding sub-constellation $\mathcal{P}(S)$ of $\C_X$, together with the natural order, allows us to recover $S$ \cite{constell}. Regarding $S$ as a unary subsemigroup of  $\mathcal{PT}_X$ with unary operation of domain, in certain cases we may make $\mathcal{P}(S)$  into a constellation with pre-range by defining $R(s)$ to be the identity map restricted to the subset
\[\bigcap\{ \mbox{dom\,} t: \exists s\cdot t\}.\]
In the case $S$ is finite and composable when viewed as a constellation, or  $S=\mathcal{PT}_X$, or more generally $S$ contains all identity maps on subsets of $X$, we have that $\mathcal{P}(S)$ is a constellation with pre-range. In the latter two cases, $\mathcal{P}(S)$ is a constellation with range and $S$ is both left restriction and right Ehresmann; these semigroups (in fact, they are properly speaking bi-unary semigroups), are the topic of the very recent investigation \cite{margolis} of Margolis and Stein.

Clearly, every category $(\C,\circ,D,R)$ is a constellation with range.  Categories are easy to characterise within constellations with range, thanks to Result \ref{catunique}: a constellation with range is a category if and only if, for $s\in \Q$, if $s\cdot e$ exists then $e=R(s)$.  Indeed categories provide further motivation for including the congruence condition in the definition of constellations with range.  In the example $(\C_X,\cdot\, ,D,R)$ of a constellation with range, the usual category product of elements in $\C_X$ may be defined as follows: if $s,t\in \C_X$, define
\[
s\circ t:=\begin{cases} s\cdot t\mbox{ if }R(s)=D(t),\\
\mbox{undefined otherwise.}
\end{cases}
\]
This seems a natural way to seek to obtain a category from any constellation with pre-range $\Q$.  In fact this is possible if and only if $\Q$ is a constellation with range.

\begin{pro}  \label{catran}
If $(\Q,\cdot\, ,D,R)$ is a constellation with pre-range, then $(\Q,\circ\, ,D,R)$ is a category under the new partial operation $$s\circ t=s\cdot t\mbox{ if }R(s)=D(t),\mbox{ and undefined otherwise}$$ 
if and only if $(\Q,\cdot\, ,D,R)$ is a constellation with range.  
\end{pro}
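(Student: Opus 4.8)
The plan is to prove the two implications separately. For the ``if'' direction, assume $(\Q,\cdot\,,D,R)$ is a constellation with range and verify that $(\Q,\circ\,,D,R)$ satisfies (C1)--(C3), together with the fact that the given operations $D$ and $R$ are the domain and range maps induced by the category structure. I would begin by identifying the $\circ$-identities as exactly the projections $D(\Q)$: using that $D$ fixes projections, and that $R$ fixes them too (since $e\cdot e$ exists forces $R(e)\leq e$, while $e\cdot R(e)$ exists by (R2) forces $e\leq R(e)$, so $R(e)=e$ by (R4) and Result~\ref{parnormal}), one checks directly from the definition of $\circ$ that every $e\in D(\Q)$ is both a left and a right $\circ$-identity, and that $D(x)\circ x$ and $x\circ R(x)$ are always defined and equal $x$. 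Axiom (C3) is then immediate, and once (C1)--(C2) are in place the uniqueness of the identities in (C3) forces $D$ and $R$ to coincide with the induced domain and range maps.

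The heart of the ``if'' direction is (C1), with (C2) falling out along the way. I would unwind both sides: $x\circ(y\circ z)$ is defined exactly when $R(y)=D(z)$ and $R(x)=D(y\cdot z)$, where $D(y\cdot z)=D(y)$ by Result~\ref{2p3}; while $(x\circ y)\circ z$ is defined exactly when $R(x)=D(y)$ and $R(x\cdot y)=D(z)$. The crucial point is that when $R(x)=D(y)$ the right congruence condition gives $R(x\cdot y)=R(R(x)\cdot y)=R(D(y)\cdot y)=R(y)$, so \emph{both} defined-ness conditions collapse to ``$R(x)=D(y)$ and $R(y)=D(z)$''. Equality of the two triple products then follows from (Q1) and (Q2) of the constellation. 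This is precisely the step where the right congruence condition is indispensable: without it, $(x\circ y)\circ z$ could be defined while $x\circ(y\circ z)$ is not, breaking associativity.

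For the ``only if'' direction, suppose $(\Q,\circ\,,D,R)$ is a category and let $s\cdot t$ exist; I must derive $R(R(s)\cdot t)=R(s\cdot t)$. The key observation, and the step I expect to be the main obstacle, is that although $s\cdot t$ need not itself be a $\circ$-product, it can be realised as one: put $t'=R(s)\cdot t$, which exists by (R3); then $D(t')=D(R(s))=R(s)$ by Result~\ref{2p3} and (R1), so $s\circ t'$ is a defined $\circ$-product, and using $s\cdot R(s)=s$ together with (Q1) and (Q2) we get $s\circ t'=s\cdot(R(s)\cdot t)=s\cdot t$. Applying the category law $R(a\circ b)=R(b)$ now yields $R(s\cdot t)=R(s\circ t')=R(t')=R(R(s)\cdot t)$, which is exactly the right congruence condition. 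Since $\Q$ is assumed to be a constellation with pre-range, this upgrades it to a constellation with range, completing the argument.
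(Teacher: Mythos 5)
Your proposal is correct and follows essentially the same route as the paper's proof: for (C1)--(C2) you use the right congruence condition to show $R(x\circ y)=R(y)$ and collapse both definedness conditions to $R(x)=D(y)$ and $R(y)=D(z)$, and for the converse you realise $s\cdot t$ as the genuine $\circ$-product $s\circ(R(s)\cdot t)$ and apply $R(a\circ b)=R(b)$, exactly as in the paper. The only presentational difference is that you spell out $R(e)=e$ for projections, which the paper leaves implicit.
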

\begin{proof}
Suppose $(\Q,\cdot\, ,D,R)$ is a constellation with range, and define $s\circ t$ as above; this makes sense because if $R(s)=D(t)$ then $R(s)\cdot t=D(t)\cdot t$ exists, and so $s\cdot t$ exists by Lemma \ref{Rst}.  Also,  if $x\circ y$ exists then of course $D(x\circ y)=D(x)$, but also $R(x)=D(y)$ and so by the comment preceding Proposition~\ref{CX} we  have
$R(x\circ y)=R(y)$.

Hence for $x,y,z\in \Q$, the following are equivalent: $(x\circ y)\circ z$ exists; $R(x)=D(y)$ and $R(x\circ y)=D(z)$; $R(x)=D(y)$ and $R(y)=D(z)$; $x\circ y$ and $y\circ z$ exist; $R(y)=D(z)$ and $R(x)=D(y\circ z)$; $x\circ(y\circ z)$ exists.  When both are defined, we have
$$x\circ(y\circ z)=x\cdot(y\cdot z)=(x\cdot y)\cdot z=(x\circ y)\circ z.$$
All of this establishes (C1).  This argument also shows that if $x\circ y$ and $y\circ z$ are defined then $x\circ (y\circ z)$ exists, which establishes (C2).

Suppose $e\in D(\Q)$, with $s\in \Q$.  If $s\circ e$ exists, then it equals $s\cdot e=s$.  Also, $e\circ s$ exists if and only if $e=R(e)=D(s)$, and then $e\circ s=e\cdot s=D(s)\cdot s=s$, so $e$ is both a right and left identity under $\circ$, hence an identity, and of course $e\circ s$ exists for $e=D(s)$.  Moreover $s\circ R(s)$ exists since $R(s)=D(R(s))$.  So (C3) holds.

Conversely, suppose $(\Q,\cdot\, ,D,R)$ is a constellation with pre-range for which the above definition makes it a category.  Suppose $x\cdot y$ exists.  Then $z=R(x)\cdot y$ exists by (R3), and clearly $R(x)=D(z)$, so $x\circ z$ exists, and 
$$R(x\cdot y)=R((x\cdot R(x))\cdot y)=R(x\cdot (R(x)\cdot y))=R(x\cdot z)=R(x\circ z)=R(z)=R(R(x)\cdot y).$$ 
This shows that $(\Q,\cdot\, ,D,R)$ satisfies the congruence condition and hence is a constellation with range.
\end{proof}

\begin{dfn}  \label{derived}
We call the category $(\Q,\circ\, ,D,R)$ obtained from the constellation with range $(\Q,\cdot\, ,D,R)$ as in the previous result its {\em derived category}.  
\end{dfn}

\begin{dfn}
An element $a$ of a constellation with range $\Q$ is {\em left cancellative} (or an {\em epimorphism} if $\Q$ is a category) if whenever $a\cdot b=a\cdot c$, we have $R(a)\cdot b=R(a)\cdot c$.  Moreover $\Q$ is {\em left cancellative} if every element is left cancellative.
\end{dfn}

The above definition coincides with the usual definition of  ``epimorphism" in a category $\Q$.   We will also use the term ``monomorphism" for categories in the usual way.
Every constellation with range consisting of partial functions is left cancellative. 

\begin{pro}  \label{oneway}
A constellation with range is left cancellative if and only if its derived category is.
\end{pro}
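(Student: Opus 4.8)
The plan is to prove the pointwise version: for a fixed $a\in\Q$, $a$ is left cancellative in the constellation with range $(\Q,\cdot\,,D,R)$ if and only if $a$ is left cancellative — equivalently, an epimorphism — in the derived category $(\Q,\circ\,,D,R)$; the statement about $\Q$ itself then follows by quantifying over $a$. Throughout I would use three routine facts: $a\cdot R(a)=a$ (since $a\cdot R(a)$ exists by (R2) and $R(a)\in D(\Q)$ is a right identity), $D(b)\cdot b=b$ for all $b$, and — crucially — Lemma \ref{Rst}, that $R(a)\cdot b$ exists precisely when $a\cdot b$ does, together with the identity $a\cdot b=(a\cdot R(a))\cdot b=a\cdot(R(a)\cdot b)$ obtained from (Q1) and (Q2).

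For the forward implication, assume $a$ is left cancellative in the constellation and take $b,c$ with $a\circ b=a\circ c$ in the derived category. The definition of $\circ$ forces $R(a)=D(b)=D(c)$ and $a\cdot b=a\cdot c$; left cancellativity then yields $R(a)\cdot b=R(a)\cdot c$, and since $R(a)=D(b)=D(c)$ the two sides collapse to $b$ and to $c$ respectively, so $b=c$. Hence $a$ is an epimorphism in the derived category.

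For the converse, assume $a$ is an epimorphism in the derived category and suppose $a\cdot b=a\cdot c$ in the constellation. Set $b'=R(a)\cdot b$ and $c'=R(a)\cdot c$, which exist by Lemma \ref{Rst}; by Result \ref{2p3} and (R1) we get $D(b')=D(R(a))=R(a)=D(c')$, so $a\circ b'$ and $a\circ c'$ are both defined, with $a\circ b'=a\cdot b'=a\cdot(R(a)\cdot b)=(a\cdot R(a))\cdot b=a\cdot b$, and similarly $a\circ c'=a\cdot c$. Thus $a\circ b'=a\circ c'$, so $b'=c'$ by the epimorphism hypothesis, that is, $R(a)\cdot b=R(a)\cdot c$, as required.

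The only genuine obstacle is the converse direction, and it is the one to expect: a constellation product $a\cdot b$ need not be a composition in the derived category (we may well have $R(a)\neq D(b)$), so it cannot be fed directly into the category's cancellation law. The remedy is to replace $b$ and $c$ by $R(a)\cdot b$ and $R(a)\cdot c$; these have domain $R(a)$, hence do compose with $a$ under $\circ$, while the identity $a\cdot b=a\cdot(R(a)\cdot b)$ guarantees that nothing is lost in passing to them.
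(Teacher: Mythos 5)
Your proof is correct and follows essentially the same route as the paper's: the forward direction uses $y=D(y)\cdot y=R(x)\cdot y$ to collapse the cancellation, and the converse replaces $b,c$ by $R(a)\cdot b,R(a)\cdot c$ (which exist by (R3)/Lemma \ref{Rst} and have domain $R(a)$) so that the category's cancellation law applies. The pointwise phrasing is only a cosmetic difference.
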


\begin{proof}
If $(\Q,\cdot\, ,D,R)$ is left cancellative, and $x\circ y=x\circ z$, then $x\cdot y=x\cdot z$ and $R(x)=D(y)=D(z)$, so $y=D(y)\cdot y=R(x)\cdot y=R(x)\cdot z=D(z)\cdot z=z$.  So the derived category $(\Q,\circ\, ,D,R)$ is left cancellative.  Conversely, suppose the derived category is left cancellative.  If $x\cdot y=x\cdot z$ then $R(x)\cdot y$ and $R(x)\cdot z$ exist by (R3), and so $x\cdot(R(x)\cdot y)=(x\cdot R(x))\cdot y=x\cdot y=x\cdot z=(x\cdot R(x))\cdot z=x\cdot(R(x)\cdot z)$. Since  $D(R(x)\cdot y)=D(R(x)\cdot z)=D(R(x))=R(x)$, the products $x\circ (R(x)\cdot y)$ and $x\circ (R(x)\cdot z)$ exist are equal, giving $R(x)\cdot y=R(x)\cdot z$.  Hence the constellation $\Q$ is left cancellative.
\end{proof}

\section{Examples and non-examples of constellations with range}  \label{examples}

Aside from the first and last two examples to follow, all are ``concrete" in the sense that  the elements are functions between sets (perhaps with additional structure).  In each case, the standard order on $D(\Q)$ corresponds to the relation of being a substructure in the appropriate sense.  All except the last appeared in \cite{constgen} as examples of constellations.

\begin{eg} The constellation determined by a quasiordered set.   \label{eg:QU}  \end{eg}
Also introduced in \cite{constell} (see Example $2.4$ there), a constellation arises from any quasiordered set $(Q,\leq)$: simply define $e=e\cdot f$ whenever $e\leq f$ in $(Q,\leq)$, and let $D(e)=e$ for all $e\in Q$.  This constellation is easily seen to be normal if and only if the quasiorder is a partial order, and in that case the constellation $Q$ has range given by $R(e)=e$ for all $e\in Q$, and indeed is left cancellative.
 
\begin{eg} The constellation of sets. \label{eg:CSETR}
\end{eg}
Let $S$ be the class of sets.  There is a familiar category structure $\mathit{SET}$
associated with $S$, consisting of sets as the objects and maps between them as the arrows.  Taking the ``arrow only'' point of view, the category consists of all possible maps between all possible sets, with the  operations $D$ and $R$ given by specifying $D(f)$ to be the identity map on the domain of $f$ and $R(f)$ the identity map on its codomain, with the partial operation of category composition defined if and only if domains and codomains coincide.  Note that for $f,g\in \mathit{SET}$, it is possible for $D(f)=D(g)$ and $xf=xg$ for all $x\in \Dom(f)=\Dom(g)$ yet $R(f)\neq R(g)$.  

In \cite{constgen}, the constellation $\mathit{CSET}$ is defined from $\mathit{SET}$ by analogy with $\C_X$ as above, by taking the elements to be the \emph{surjective} functions, with $D$ defined as in the category, but with composition of functions $f\cdot g$ defined if and only if $\Img(f)\subseteq\Dom(g)$.  This normal constellation has range, with $R(f)$ the identity map on the image of $f$, easily seen to satisfy the congruence condition.  Note that $D(s\cdot t)=D(s)$, but $R(s\cdot t)$ is not simply $R(t)$, so there is no left/right symmetry as in the category.  This example is also left cancellative.  In contrast to $\mathit{SET}$, if $f,g\in \mathit{CSET}$ are such that $D(f)=D(g)$ and $xf=xg$ for all $x\in \Dom(f)=\Dom(g)$, then $f=g$ and so $R(f)=R(g)$.  

\begin{eg} The constellation of groups. \label{eg:CGRPR} \end{eg}

Let $G$ be the class of groups.  In the constellation $CGRP$, the elements are the surjective homomorphisms between groups, and composition and $D$ are as in the constellation of sets $\mathit{CSET}$.  The constellation $CGRP$ has range that satisfies the congruence condition, as in $CSET$, and is also left cancellative.  The example generalises widely: any class of algebras of the same type, such as rings, modules, semigroups and so on, will give rise to a generally left cancellative constellation with range in a similar way.  The left cancellative law will hold if morphisms are functions and $R$ is as in $\mathit{CSET}$, but does not hold if morphisms are binary relations, for example.

\begin{eg}The constellation of rings with additive homomorphisms. \label{eg:CRING+R} \end{eg}

Let $RING^+$ be the category of rings but with arrows being additive abelian group homomorphisms.  (That this is a category is easily checked.)  Again there is an associated constellation $CRING^+$, consisting of mappings of the form $f:R\rightarrow S$ where $S$ is ``as small as possible", which here means generated as a ring by Im$(f)$.  This example has pre-range: for each element $f:R\rightarrow S$, $R(f)=1_S$, the identity map on $S$. 
However, it does not satisfy the congruence condition and so $RING^+$ is not a constellation with range. To see this, consider the maps $f: \Z[x]\rightarrow \Z[x]$ given by $f(a_0+a_1x+\cdots+a_nx^n)=a_0+a_1x$ and $g: \Z[x]\rightarrow \Z[x^2]$ given by $g(a_0+a_1x+\cdots+a_nx^n)=a_0+a_2x^2$, where $\Z[x^2]$ is the ring of integer polynomials in which only even powers of $x$ appear.  Both $f,g$ are additive homomorphisms, Im$(f)$ generates $\Z[x]$ and Im$(g)$ generates $\Z[x^2]$, so both $f,g\in CRING^+$.  Now $fg$, the composite $f$ followed by $g$, maps $a_0+a_1x+\cdots+a_nx^n$ to $a_0$, and Im$(f)\subseteq \mbox{Dom}(g)$, so $f\cdot g:\Z[x]\rightarrow \Z$ is an element of $CRING^+$ and $R(f\cdot g)$ is the identity map on $\Z$.  But $R(f)$ is the identity map on $\Z[x]$, so $R(R(f)\cdot g)=R(g)$, which is the identity map on $\Z[x^2]$.
  
\begin{eg} The constellation of partial maps with infinite domain. \label{eg:CXinfR}\end{eg}  

Suppose $X$ is an infinite set, and denote by $\C^{\infty}_X$ the set of all partial maps in $\C_X$ that have infinite domains, a subconstellation of $\C_X$.  It was noted in \cite{constgen} that there is no analogous category, and indeed the normal constellation $\C_X^{\infty}$ does not have range: for if $s\in \C_X$ has finite image, then there is no smallest $e\in D(\C_X^{\infty})$ such that $s\cdot e$ exists.

\begin{eg}\label{eg:paconstTS} Constellations with range from actions of monoids on posets.  \end{eg}

This example is of quite a different flavour and comes from the connection between globalisation of partial actions of monoids,
categories, groupoids and, indeed, constellations with range,  and so-called maximum enlargement theorems for star-injective functors and radiants. {Here a radiant $\pi:\Q\rightarrow \mathcal{P}$ is {\em star-injective} if for any $s,t\in \Q$ with $s\pi=t\pi$ and $D(s)=D(t)$ we have that $s=t$.} The connections are well established for partial actions of {\em groups} \cite{kellendonk,lawsoninverse} and may be used to develop structure theorems for classes of monoids.   Here we give a taste of the theory as applied to {\em total} actions of {\em monoids} on partially ordered sets.

Suppose $X$ is a poset, and that the monoid $M$ acts monotonically on the right of $X$, so that for all $x,y\in X$ and $m,n\in M$,
\[ x\cdot 1=x, \,\,\, x\cdot(mn)=(x\cdot m)\cdot n,\mbox{ and } x\leq y \Rightarrow x\cdot m\leq y\cdot m.\]
Let $C(X,M)=X\times M$ equipped with the unary operation $D$ and the partial binary operation $\cdot\,$, given by
$$D((x,m))=(x,1)\mbox{ and } (x,m)\cdot (y,n)=(x,mn) \mbox{ providing $x\cdot m\leq y$}.$$
It is routine to check that $\Q=(C(X,M),\cdot\, ,D)$ is a (normal) constellation with range, and $R((x,m))=(x\cdot m,1)$ for all $(x,m)\in C(X,M)$.  

Now define $\theta: C(X,M)\rightarrow M$ by setting $(x,m)\theta=m$.  This is easily seen to be a star-injective radiant $\Q\rightarrow M$ (where we view $M$ as a single-object category, and hence as a constellation in which all products exist and $D(m)=1$ for all $m\in M$.  Moreover it satisfies the following condition: 
$$(m\in M\ \wedge\ e\in D(\Q)) \Rightarrow \exists m'\in \Q: (m'\theta=m \wedge D(m')=e). \hspace{2cm} (\alpha)$$  
In the language of categories (recalling that $M$ is a single-object category), this says that $\theta$ is a covering functor.

Conversely, if $\Q$ is a constellation with range, $M$ is a monoid and $\psi:\Q\rightarrow M$ is a star-injective radiant satisfying condition ($\alpha$), then it can be shown that $\Q\cong C(X,M)$, where $X=D(\Q)$ and the action of $M$ on $X$ is given by $e\circ m=R(m')$ where $m'\in \Q$ is such that $m'\psi=m$ and $D(m')=e$.  So, the action of a monoid $M$ on a poset $X$ corresponds to a constellation with range together with a suitable star-injective radiant.  

It is natural to ask what happens condition when ($\alpha$) is omitted in the constraints for $\psi$ above.  In this case, we must replace the action by a suitable partial action, and consider only pairs $(x,m)$ for which $x\cdot m$ exists. Again, we obtain a constellation with range.  Further details will appear in a later work.

\begin{eg}\label{eg:Ehr} Constellations with range from Ehresmann semigroups.  \end{eg}

Let S be an Ehresmann semigroup, so that S is a bi-unary semigroup with unary operations $D$ and $R$ (more frequently, $^+$ and $^*$ are used, as in \cite{lawson1} for example).  Defining a restricted product $\cdot$ on $S$ by the rule that $\exists a\cdot b$ if and only if $R(a)\leq D(b)$, in which case $a\cdot b=ab$, it is routine to check that $(S,\cdot,D,R)$ is a constellation with range.  In fact this process can be applied to any DR-semigroup in the sense of \cite{Dsemi} in which the domain and range operations satisfy the left and right congruence conditions, greatly increasing the supply of examples, including many in which the domain elements $D(s)$ need not form a semilattice nor even be closed under multiplication.

\section{Constellations with range are ordered categories with restriction}  \label{correspsec}

A strategy for many types of semigroup with domain-like and range-like operations is to try to represent them very economically, using a partially defined multiplication in which as few products as possible are retained, typically giving a small category, and using natural orders to recover the `lost' information.  As briefly discussed above, this was the approach of the ESN theorem for inverse semigroups, which led to the work of Lawson, who established an equivalence between Ehresmann semigroups and inductive categories in \cite{lawson1}.  It was also the approach used by Gould and Hollings in \cite{constell}, where they showed that left restriction semigroups correspond to what they called inductive constellations.  

We adopt a similar approach to constellations with range, using the derived category notion.  For the derived category to remain capable of capturing the information present in the original constellation with range, it is sufficient that it retain information about constellation products of the form $e\cdot s$, where $e\in D(\Q), s\in \Q$ and $e\leq D(s)$, since $s\cdot t=s\circ(R(s)\cdot t)$.  This is because if $s\cdot t$ exists (or equivalently by Lemma \ref{Rst}), $R(s)\cdot t$ exists, then $R(s)=D(R(s))=D(R(s)\cdot t)$, so $s\circ(R(s)\cdot t)$ exists and equals $s\cdot(R(s)\cdot t)=(s\cdot R(s))\cdot t=s\cdot t$.  To achieve this we equip the category with both a partial order and a (left) restriction operation.  We then show it is possible to reverse the process by beginning with such an ordered category with restriction and constructing from it a constellation with range.  The constructions are shown to be inverse, and a category isomorphism is established.

\begin{dfn}\label{defn:orderedconst}
We say $(\Q,\cdot\, ,D,\leq)$ is an {\em ordered constellation} if $(\Q,\cdot\, ,D)$ is a constellation and $\leq$ is a  partial order on $\Q$ (not necessarily the natural quasiorder) satisfying, for all $a,b,c,d\in \Q$:
\ben[label=\textup{(O\arabic*)}]
\item if $a\leq b$ and $c\leq d$ with both $a\cdot c$ and $b\cdot d$ existing, then $a\cdot c\leq b\cdot d$;
\item if $a\leq b$ then $D(a)\leq D(b)$;
\een
If the constellation with range $(\Q,\cdot\, ,D,R)$ is equipped with a partial order $\leq$ such that $(\Q,\cdot\, ,D,\leq)$ is an ordered constellation and for all $a,b\in \Q$, 
\ben[label=\textup{(O\arabic*')}]
\item[{\em (O$2^{\prime}$)}] if $a\leq b$ then $R(a)\leq R(b)$,
\een
then we say $(\Q,\cdot\, ,D,R,\leq)$ is an {\em ordered constellation with range}.  
\end{dfn}

Note that an ordered constellation is defined in \cite{constell} using rather stronger properties than those used here, insisting upon the existence of elements called {\em restrictions} and {\em corestrictions}, following  similar conventions for ordered groupoids. It is convenient here for us to disentangle the order from the existence of such elements, hence our Definition~\ref{defn:orderedconst}.   When the constellation happens to be a category, we recover the definition of an $\Omega$-structured category as in \cite{lawson1}, which is weaker than the definition of ordered category given there which also requires the following law:
\ben[label=\textup{(OO)}]
\item if $a\leq b$, $D(a)=D(b)$, and $R(a)=R(b)$ then $a=b$.
\een
Law (OO) holds in some but not all situations to follow, and in any case our definition of ordered constellation is also closer to the usual definition of ordered semigroups.

\begin{dfn}
Suppose $(\Q,\cdot\, ,D,\leq)$ is an ordered constellation.  We say it {\em has restrictions} if it satisfies the following:
\ben[label=\textup{(O3)}]
\item if $s\in \Q$ and $e\in D(\Q)$ with $e\leq D(s)$, then there exists $e|s\in \Q$, the unique $x\in \Q$ for which $x\leq s$ and $D(x)=e$.
\een
\end{dfn}

(O3) above is (OC8) (i) in Lemma $2.6$ appearing in \cite{lawson1}.

\begin{pro}  \label{conrest}
Let $\Q=(\Q,\cdot\, ,D)$ be a normal constellation, with $\leq$ the natural order on it.  Then $(\Q,\cdot\, ,D,\leq)$ is ordered and has restrictions given by $e|s = e \cdot s$ whenever $e \leq D(s)$.  Further, if $(\Q,\cdot\, ,D,R)$ is a constellation with range, then $(\Q,\cdot\, ,D,R,\leq)$ is an ordered constellation with range.
\end{pro}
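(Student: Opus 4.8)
The plan is to check the defining conditions one at a time, relying only on the one-directional associativity (Q1) of constellations. Since $\Q$ is normal, Result~\ref{parnormal} guarantees the natural order $\leq$ really is a partial order, so it is an admissible choice of order. Condition (O2) is immediate: if $a\leq b$ then $a=D(a)\cdot b$ exists, so $D(a)\cdot D(b)$ exists by Result~\ref{2p3}, which is exactly $D(a)\leq D(b)$. For the restriction condition (O3), given $s\in\Q$ and $e\in D(\Q)$ with $e\leq D(s)$, the product $e\cdot D(s)$ exists by definition of $\leq$ on $D(\Q)$, hence $e\cdot s$ exists by Result~\ref{2p3}, with $D(e\cdot s)=D(e)=e$; since $e\cdot s=D(e\cdot s)\cdot s$ we get $e\cdot s\leq s$, and if $x\leq s$ with $D(x)=e$ then $x=D(x)\cdot s=e\cdot s$, so $e\cdot s$ is the unique such element. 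Thus restrictions exist and are given by $e|s=e\cdot s$.

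The one step needing care is (O1). Suppose $a\leq b$, $c\leq d$, and both $a\cdot c$ and $b\cdot d$ exist; I would prove $a\cdot c\leq b\cdot d$ by first establishing $a\cdot c=a\cdot d$ and then $a\cdot d\leq b\cdot d$. For the first: since $c=D(c)\cdot d$, the product $a\cdot(D(c)\cdot d)=a\cdot c$ exists, so (Q1) gives that $(a\cdot D(c))\cdot d$ exists and equals $a\cdot c$; but $D(c)$ is a projection, hence a right identity, so $a\cdot D(c)=a$, whence $a\cdot c=a\cdot d$ and in particular $a\cdot d$ exists. For the second: writing $a=D(a)\cdot b$ and using $D(a)\leq D(b)=D(b\cdot d)$ (from (O2) and Result~\ref{2p3}), the product $D(a)\cdot(b\cdot d)$ exists by Result~\ref{2p3}, so by (Q1) so does $(D(a)\cdot b)\cdot d=a\cdot d$, with $a\cdot d=D(a)\cdot(b\cdot d)=D(a\cdot d)\cdot(b\cdot d)$, which is exactly $a\cdot d\leq b\cdot d$. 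Combining the two gives $a\cdot c=a\cdot d\leq b\cdot d$, establishing (O1), and hence that $(\Q,\cdot\,,D,\leq)$ is an ordered constellation with restrictions.

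For the final assertion, suppose $(\Q,\cdot\,,D,R)$ is a constellation with range. Then $\Q$ is normal by (R4) of Proposition~\ref{Rlaws}, so everything above applies and $(\Q,\cdot\,,D,\leq)$ is an ordered constellation. It remains to verify (O2$'$): if $a\leq b$ then $a=D(a)\cdot b$ exists, and the second assertion of Lemma~\ref{Rst}, applied with $s=D(a)$ and $t=b$, yields $R(a)=R(D(a)\cdot b)\leq R(b)$. Hence $(\Q,\cdot\,,D,R,\leq)$ is an ordered constellation with range.

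I expect the main obstacle to be arranging the argument for (O1) so that every appeal to (Q1) starts from a triple product $x\cdot(y\cdot z)$ already known to exist, since constellation associativity runs only in that direction; splitting the claim into the two intermediate statements $a\cdot c=a\cdot d$ and $a\cdot d\leq b\cdot d$ is precisely what makes this possible, and the observation that projections are right identities is what collapses $a\cdot D(c)$ to $a$.
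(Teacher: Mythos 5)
Your proof is correct, and it is more self-contained than the paper's. The paper disposes of the first assertion (that the natural order makes $\Q$ an ordered constellation with restrictions $e|s=e\cdot s$) by citing Proposition 3.3 of the earlier paper \emph{D-semigroups and constellations}, and only verifies (O2$'$) by hand: from $a=D(a)\cdot b$ and the existence of $b\cdot R(b)$ it deduces via (Q1)/(Q2) that $a\cdot R(b)$ exists, hence $R(a)\cdot R(b)$ exists and $R(a)\leq R(b)$. You instead verify (O1)--(O3) directly, and your handling of (O1) --- splitting $a\cdot c\leq b\cdot d$ into the two steps $a\cdot c=a\cdot d$ and $a\cdot d\leq b\cdot d$, each arranged so that (Q1) is applied to a triple product already known to exist --- is exactly the kind of care the one-sided associativity demands; this is a genuine service to a reader who does not have the cited reference to hand. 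For (O2$'$) you appeal to the second assertion of Lemma \ref{Rst} with $s=D(a)$, $t=b$, which packages the same computation the paper carries out explicitly; either route is fine, and yours is marginally shorter given that the lemma is already available. No gaps.
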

\begin{proof}
By Proposition $3.3$ in \cite{Dsemiconst}, $(\Q,\cdot\, ,D,\leq)$ is quasiordered in which $e|s = e \cdot s$ whenever $e \leq D(s)$, and as $\Q$ is normal, $\leq$ is a partial order.   If $\Q$ has range then $\Q$ is normal by (R4) in Proposition \ref{Rlaws}, and if $a,b\in \Q$ are such that $a\leq b$ under the natural order, then $a=D(a)\cdot b$, so because $b\cdot R(b)$ exists, so does $(D(a)\cdot b)\cdot R(b)=a\cdot R(b)$.  Hence $R(a)\cdot R(b)$ exists and $R(a)\leq R(b)$.
\end{proof}

The case of ordered constellations of most interest to us here is that of categories.

\begin{dfn}\label{defn:fullycanc}
We say that $(Q,\circ\, ,D,R,\leq)$ is an {\em ordered category with restrictions} if $(Q,\circ\, ,D,R)$ is a category that is ordered and has restrictions as a constellation with range.
\end{dfn}

 Recall that in the category case, laws (O1), (O2) and (O$2^{\prime}$) define $\Omega$-structured categories, considered in Section 2 of \cite{lawson1} following their introduction by Ehresmann (see \cite{Ehres}), while (O3) corresponds to (OC8) (i) in \cite{lawson1}.  So it follows from what was observed by Lawson in Lemma $2.6$ of \cite{lawson1} that ordered categories with restrictions as defined here are indeed ordered categories with restrictions in the sense of \cite{lawson1}, since they are easily seen to satisfy the implication $$D(x)=D(y),\ x\leq y\ \Rightarrow\ x=y$$ 
(see the proof of Proposition $2.8$ in \cite{lawson1}), and hence satisfy law (OO).
 
The partial order on an ordered category with restrictions $(Q,\circ\, ,D,R,\leq)$ can be expressed purely in terms of restriction
and the partial order on $D(Q)$.  This is noted in \cite{lawson1} for the similar cases considered there, but we include the easy proof here for completeness.

\begin{pro}  \label{qorder}
Let $(Q,\circ\, ,D,R,\leq)$ be an ordered category with restrictions.
For $s,t\in Q$, $s\leq t$ if and only if $D(s)\leq D(t)$ and $s=D(s)|t$.
\end{pro}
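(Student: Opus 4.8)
The statement is essentially a direct unwinding of the definition of restriction, so the plan is to treat the two implications separately, in each case reading off what (O3) and (O2) give.

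For the forward implication, suppose $s\leq t$. First I would apply (O2) (which holds since $(Q,\circ\, ,D,R,\leq)$ is in particular an ordered constellation) to conclude $D(s)\leq D(t)$. This makes the restriction $D(s)|t$ well-defined via (O3), and by definition $D(s)|t$ is the \emph{unique} $x\in Q$ with $x\leq t$ and $D(x)=D(s)$. But $s$ itself satisfies $s\leq t$ and $D(s)=D(s)$, so by that uniqueness clause $s=D(s)|t$. This gives both required conclusions.

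For the converse, suppose $D(s)\leq D(t)$ and $s=D(s)|t$. The hypothesis $D(s)\leq D(t)$ is exactly what is needed for $D(s)|t$ to be defined by (O3), and the defining property of $D(s)|t$ includes $D(s)|t\leq t$. Hence $s=D(s)|t\leq t$, as required.

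The only point needing care is bookkeeping rather than mathematics: one must note that the expression $D(s)|t$ appearing in the statement is only meaningful under the side condition $D(s)\leq D(t)$, which is precisely why that inequality is bundled into the ``if'' clause, and why in the forward direction one must derive $D(s)\leq D(t)$ (from (O2)) before invoking the uniqueness of the restriction. There is no genuine obstacle; the proof is a two-line appeal to (O2) and (O3).
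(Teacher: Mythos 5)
Your proof is correct and follows exactly the same route as the paper: derive $D(s)\leq D(t)$ from (O2), then invoke the uniqueness clause in (O3) to identify $s$ with $D(s)|t$, with the converse being immediate from $D(s)|t\leq t$. No issues.
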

\begin{proof}
If $s\leq t$ then $D(s)\leq D(t)$, so $D(s)|t$ exists and is the unique $x\leq t$ with $D(x)=D(s)$; thus $x=s$.  The converse is immediate.
\end{proof}

\begin{lem}  \label{catrest}
Let $(Q,\circ\, ,D,R,\leq)$ be an ordered category with restrictions.  Then for $e\in D(Q)$ and $s,t\in Q$ for which $e\leq D(s)$ and $R(s)\leq D(t)$:
\ben
\item[(1)]  $R(e|s)|t=R(e|s)|(R(s)|t)$ (and all exist), and
\item[(2)]  if $s\circ t$ exists then $e|(s\circ t)=(e|s)\circ R(e|s)|t$.
\een
\end{lem}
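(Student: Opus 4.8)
We prove the two identities by exploiting the close relationship between the order, the restriction operation, and the derived constellation product, together with the constellation-with-range laws (R1)--(R4) and Lemma \ref{Rst}. Throughout, recall that in an ordered category with restrictions $e|s$ is characterised by the two conditions $e|s \leq s$ and $D(e|s) = e$; recall also (Proposition \ref{qorder}) that $x \leq y$ iff $D(x)\leq D(y)$ and $x = D(x)|y$. The key preliminary observation is that $R(e|s) \leq R(s)$: indeed $e|s \leq s$, so by (O$2'$) we get $R(e|s) \leq R(s)$, and since $R(s)\leq D(t)$ by hypothesis this gives $R(e|s)\leq D(t)$, so all the restriction terms $R(e|s)|t$, $R(s)|t$, and $R(e|s)|(R(s)|t)$ are defined. (For the last, note $R(R(s)|t) = R(t)$ since $D(R(s)|t)=R(s)=D(R(s))$, but in fact we only need $D(R(s)|t) = R(s) \geq R(e|s)$, which holds since $D(R(s)|t)=R(s)$.)

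\textbf{Proof of (1).} Both sides are restrictions of $t$ (or of $R(s)|t$, which is itself $\leq t$) with domain $R(e|s)$. Set $f = R(e|s)$. Then $f|t$ has domain $f$ and satisfies $f|t \leq t$. On the other side, $R(s)|t \leq t$, and $f \leq R(s) = D(R(s)|t)$, so $f|(R(s)|t)$ is defined, has domain $f$, and satisfies $f|(R(s)|t) \leq R(s)|t \leq t$. Thus $f|(R(s)|t)$ is an element $\leq t$ with domain $f$; by the uniqueness clause in (O3) it must equal $f|t$. This establishes (1). (Alternatively, one can run the same transitivity-of-restriction argument that underlies Lemma $2.6$ of \cite{lawson1}; the point is purely that restriction composes, i.e. $g \leq h \leq k$ with $D(g)=g$ forces $g|k = g|(h|k)$.)

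\textbf{Proof of (2).} Assume $s\circ t$ exists, i.e. $R(s) = D(t)$. We must show $e|(s\circ t) = (e|s)\circ (R(e|s)|t)$. First, the right-hand side is legitimate: $e|s$ has range $f = R(e|s)$, and $f|t$ has domain $f = R(e|s)$, so the two composites have matching range and domain and $(e|s)\circ(f|t)$ exists. Now I would identify the right-hand side via Proposition \ref{qorder}: it suffices to check that $(e|s)\circ(f|t) \leq s\circ t$ and that its domain is $e$. The domain is $D(e|s) = e$, as required. For the order relation, use (O1): $e|s \leq s$ and $f|t \leq t$, and both $(e|s)\circ(f|t) = (e|s)\cdot(f|t)$ and $s\circ t = s\cdot t$ exist (the composite products being the restricted-category products, which agree with the constellation products when defined), so (O1) gives $(e|s)\cdot(f|t) \leq s\cdot t$, i.e. $(e|s)\circ(f|t) \leq s\circ t$. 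Since $e|(s\circ t)$ is by definition the unique element $\leq s\circ t$ with domain $D(e|(s\circ t)) = e$ (note $e \leq D(s) = D(s\circ t)$, so the restriction is defined), and $(e|s)\circ(f|t)$ has both properties, uniqueness in (O3) forces them to be equal.

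\textbf{Expected main obstacle.} The genuinely substantive points are bookkeeping ones: verifying that every restriction and every composite written down is actually defined (which is why the preliminary inequality $R(e|s)\leq R(s)\leq D(t)$ and the identity $D(R(s)|t)=R(s)$ must be recorded first), and being careful that $\circ$ and $\cdot$ agree on the products in question so that (O1)---an axiom about the constellation product $\cdot$---can legitimately be applied to the category product $\circ$. Once the well-definedness is in place, both parts reduce to the single uniqueness principle in (O3): an element of $Q$ that lies below a given element and has a prescribed domain is unique. I expect no hard estimate or clever construction to be needed; the proof is a disciplined application of that uniqueness together with (O1), (O$2'$), and Lemma \ref{Rst}.
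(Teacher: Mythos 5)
Your proof is correct and follows essentially the same route as the paper's: part (1) is the observation that $R(e|s)\leq R(s)\leq D(t)$ via (O$2'$) followed by the uniqueness clause of (O3) applied to two elements below $t$ with domain $R(e|s)$, and part (2) combines (O1) with that same uniqueness. The only cosmetic difference is your parenthetical care about $\circ$ versus $\cdot$, which is unnecessary here since (O1) is being applied directly to the category product.
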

\begin{proof} Since $e|s\leq s$, $R(e|s)\leq R(s)\leq D(t)$.  Also, $R(s)|t$ exists and has domain $R(s)\geq R(e|s)$. Hence $R(e|s)|t$ and $R(e|s)|(R(s)|t)$ both exist.  Since both $R(e|s)|t$ and $R(e|s)|(R(s)|t)$ have domain $R(e|s)$ and because of the facts that $R(e|s)|(R(s)|t)\leq R(s)|t\leq t$ and $R(e|s)|t\leq t$, by uniqueness we have $R(e|s)|(R(s)|t)=R(e|s)|t$.

Suppose $s\circ t$ exists, so that $R(s)=D(t)$.  Now $R(e|s)|t$ exists and $R(e|s)|t\leq t$, so by (O1), $(e|s)\circ R(e|s)|t\leq s\circ t$.  Moreover, $D((e|s)\circ R(e|s)|t)=D(e|s)=e$, so by (O3), $e|(s\circ t)=(e|s)\circ R(e|s)|t$.
\end{proof}

We now obtain a characterisation of those categories that arise from constellations with range as in Proposition \ref{catran}.

\begin{thm}  \label{corresp}
If $(\Q,\cdot\, ,D,R)$ is a (left cancellative) constellation with range, then the derived category $(\Q,\circ\, ,D,R)$ is an ordered (left cancellative) category with restrictions in which $\leq$ is the natural order on $\Q$ as a normal constellation, and for $e\leq D(s)$, $e|s=e\cdot s$.

Conversely, if $(\C,\circ\, ,D,R,\leq)$ is a (left cancellative) ordered category with restrictions, then setting $s\cdot t$ equal to $s\circ (R(s)|t)$ whenever $R(s)\leq D(t)$ makes $(\C,\cdot\, ,D,R)$ into a (left cancellative) constellation with range, and the given partial order is nothing but the natural order on the constellation.  
\end{thm}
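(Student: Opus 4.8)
The plan is to handle the two implications separately, relying on Propositions \ref{catran}, \ref{conrest} and \ref{oneway} for most of the structural claims and reserving the real work for the verification of the constellation axioms in the converse direction.

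For the forward implication, suppose $(\Q,\cdot\,,D,R)$ is a constellation with range. By Proposition \ref{catran} the derived operation $\circ$ makes $(\Q,\circ\,,D,R)$ a category, and since $\Q$ is normal by (R4), Proposition \ref{conrest} tells us that the natural order $\leq$ on the constellation $(\Q,\cdot\,,D)$ turns $(\Q,\cdot\,,D,R,\leq)$ into an ordered constellation with range that has restrictions, with $e|s=e\cdot s$ whenever $e\leq D(s)$. It then remains to observe that the very same order, together with the unchanged $D$, $R$ and restriction operation, witnesses that $(\Q,\circ\,,D,R,\leq)$ is an ordered category with restrictions: laws (O2), (O2$'$) and (O3) refer only to $D$, $R$ and $\leq$, which are unaffected by replacing $\cdot$ with $\circ$; and (O1) for $\circ$ follows from (O1) for $\cdot$ because $\circ$ is merely a restriction of $\cdot$, agreeing with it wherever defined. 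The parenthetical left cancellative statement is then immediate from Proposition \ref{oneway}.

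For the converse, start with an ordered category with restrictions $(\C,\circ\,,D,R,\leq)$ and set $s\cdot t=s\circ(R(s)|t)$ whenever $R(s)\leq D(t)$; since $R(s)\in D(\C)$ and $D(R(s)|t)=R(s)$, this composite is always defined under that hypothesis. First I would check (Q3): a short computation, using that restricting a morphism to its own domain is trivial (so $D(x)|x=x$), shows $D(x)\cdot x=x$, and uniqueness of the right identity with this property follows by evaluating $D(x)\cdot e$ both as $D(x)$ (because $e$ is a right identity) and as $D(x)|e=e$ (because $D(e)=D(x)$), so the constellation's domain operation coincides with the categorical one and its projections are the categorical identities. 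Axiom (Q2) is trivial since $D(y\cdot z)=D(y)$. The substantive point is (Q1): expanding $x\cdot(y\cdot z)$ by the definition of $\cdot$ twice, I would use Lemma \ref{catrest}(2) to rewrite $R(x)|\big(y\circ(R(y)|z)\big)$ as $(R(x)|y)\circ\big(R(R(x)|y)|(R(y)|z)\big)$, then Lemma \ref{catrest}(1) to replace $R(R(x)|y)|(R(y)|z)$ by $R(R(x)|y)|z$, reassociate using categorical associativity, and finally recognise the result as $(x\cdot y)\cdot z$ via $R(x\cdot y)=R(R(x)|y)$, checking along the way that $R(x\cdot y)\leq R(y)\leq D(z)$ so that the latter product is defined. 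I expect this computation to be the main obstacle; everything else is bookkeeping.

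To complete the converse, I would verify (R1)--(R4) for $R$: (R1) and normality (R4) follow from the category laws and antisymmetry of $\leq$ (for identities $e,f$, the product $e\cdot f$ is defined exactly when $e\leq f$), (R2) holds because $s\cdot R(s)=s\circ R(s)=s$, and (R3) because $R(R(s))=R(s)$; Proposition \ref{Rmin} then yields that $(\C,\cdot\,,D,R)$ is a constellation with pre-range. The right congruence condition drops out of $R(s\cdot t)=R(R(s)|t)=R(R(s)\cdot t)$, using $R(s)\cdot t=R(s)|t$, so $(\C,\cdot\,,D,R)$ is a constellation with range. For the order, I would note that $D(s)\cdot t=D(s)|t$ whenever $D(s)\leq D(t)$, so ``$s=D(s)\cdot t$'' says exactly ``$D(s)\leq D(t)$ and $s=D(s)|t$'', which by Proposition \ref{qorder} is $s\leq t$; hence the natural order of the constellation is the given order. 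Lastly the left cancellative clause follows from Proposition \ref{oneway}, once one observes that the derived category of $(\C,\cdot\,)$ is exactly $(\C,\circ\,)$, since $R(s)|t=t$ when $R(s)=D(t)$.
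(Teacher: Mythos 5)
Your proposal is correct and follows essentially the same route as the paper's proof: the forward direction via Propositions \ref{catran}, \ref{conrest} and \ref{oneway}, and the converse by verifying (Q1)--(Q3) with the key associativity computation resting on Lemma \ref{catrest}, then (R1)--(R4), the congruence condition, and the identification of the orders via Proposition \ref{qorder}. The only (harmless) deviation is that you obtain left cancellativity in the converse by observing that $(\C,\circ)$ is the derived category of $(\C,\cdot)$ and invoking Proposition \ref{oneway}, where the paper instead cancels $s$ directly in $s\circ(R(s)|t)=s\circ(R(s)|u)$.
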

\begin{proof} 
Suppose $(\Q,\cdot\, ,D,R)$ is a constellation with range and define $s\circ t$ as before Proposition~\ref{catran}.  Proposition \ref{oneway} shows that $(\Q,\circ\, ,D,R)$ is a category, moreover one that is left cancellative as a category if $(\Q,\cdot\, ,D,R)$ is left cancellative.  By Proposition \ref{conrest}, $(\Q,\cdot\, ,D,R,\leq)$ is an
ordered constellation with range if $\leq$ is the natural order, moreover, $e|s$ exists as described.  Because $\circ$ is just a restricted form of $\cdot$, it follows immediately that $(\Q,\circ,D,R,\leq)$ is an ordered category with restrictions.  
%Again from Proposition~\ref{oneway}, if $(\Q,\cdot,D,R)$ is left cancellative then so is $(\Q,\circ,D,R)$.

Conversely, suppose $(\C,\circ,D,R,\leq)$ is an ordered category with restrictions, so that (C1)--(C3) hold (for $\circ$), $D(\C)=\{D(s)\mid s\in \C\}$ is the set of identities (equivalently right identities) of $(\C,\circ)$, and all of (O1), (O2), (O2$^{\prime}$) and (O3) hold for $\circ$.  Define the new product $s\cdot t$ on $C$ as in the theorem statement; this is well-defined since $D(R(s)|t)=R(s)$. Notice that where $x\cdot y$ exists then $D(x)=D(x\cdot y)$. Suppose $x,y,z\in \C$ are such that $x\cdot (y\cdot z)$ exists.  Then $R(y)\leq D(z)$, $y\cdot z=y\circ (R(y)|z)$, $R(x)\leq D(y\cdot z)=D(y)$ and $x\cdot(y\cdot z)=x\circ (R(x)|(y\circ(R(y)|z)))$.  Since $R(x)\leq D(y)$, $x\cdot y$ exists, and equals $x\circ (R(x)|y)$.  Also, $R(x\cdot y)=R(R(x)|y)\leq R(y)\leq D(z)$, so $(x\cdot y)\cdot z$ exists and $R(y)|z$ exists.  Hence
\bea
(x\cdot y)\cdot z&=&(x\circ (R(x)|y))\circ (R(R(x)|y)|z)\\
&=&x\circ ((R(x)|y)\circ R(R(x)|y)|z))\\
&=&x\circ ((R(x)|y)\circ R(R(x)|y)|(R(y)|z))\mbox{ by Lemma \ref{catrest} (1)}\\
&=&x\circ (R(x)|(y\circ (R(y)|z))\mbox{ by Lemma \ref{catrest} (2)}\\
&=&x\cdot(y\cdot z).
\eea  
So (Q1) holds.

Now suppose $x\cdot y$ and $y\cdot z$ exist.  Then $R(x)\leq D(y)=D(y\cdot z)$, so $x\cdot(y\cdot z)$ exists, establishing (Q2).

Pick $x\in \C$.  Then $R(D(x))=D(x)$, so $D(x)\cdot x=D(x)\circ (D(x)|x)=D(x)\circ x=x$.  If also $e\cdot x=x$ for some $e\in D(\C)$, then $D(x)=D(e\cdot x)=D(e)=e$, so (Q3) holds. Hence $(\C,\cdot\, ,D)$ is a constellation.

Supppose $e,f\in D(\C)$, and $e\cdot f,f\cdot e$ both exist.  Then $e=R(e)\leq f$ under the given partial order on $\C$ and similarly $f\leq e$, so $e=f$.  Hence $(\C,\cdot\, ,D)$ is normal.

Turning to range, Propositions \ref{Rlaws} and \ref{Rmin} between them show that establishing (R1)--(R4) for $R$ as given suffices to establish that $(\C,\cdot\, ,D,R)$ is a constellation with pre-range.  Law (R1) is immediate.  For all $s\in \C$,  $R(s)=D(R(s))$, so $s\cdot R(s)$ exists, establishing (R2). Also, if $s\cdot t$ exists then $R(s)\leq D(t)$, and so $R(R(s))\leq D(t)$, so $R(s)\cdot t$ exists, which establishes (R3).   (R4) is normality, shown above.  So $(\C,\cdot\, ,D,R)$ is a constellation with pre-range.  Finally, if $s\cdot t$ exists then $R(s\cdot t)=R(s\circ (R(s)|t))=R(R(s)|t)=R(R(s)\circ (R(s)|t))=R(R(s)\cdot t)$, so $(\C,\cdot\, ,D,R)$ satisfies the congruence condition.  

If $s\leq t$ under the given partial order, then $s=D(s)|t$ by Proposition \ref{qorder}, so $D(s)\cdot t=D(s)\circ(D(s)|t)=D(s)\circ s=s$.  Conversely, if $s=e\cdot t$ for some $e\in D(\Q)$, then $s=e\circ(e|t)=e|t\leq t$.  So the given partial order is the natural order on $(\C,\cdot\, ,D,R)$.

Suppose $(\C,\circ,D,R)$ is left cancellative.  If $s\cdot t=s\cdot u$ then $s\circ (R(s)|t)=s\circ (R(s)|u)$, so $R(s)|t=R(s)|u$, 
since $(\C,\circ,D,R)$ is left cancellative, and it follows that $R(s)\cdot t=R(s)\cdot u$.  So $(\C,\cdot\, ,D,R)$ is left cancellative.
\end{proof}

The constructions in the previous theorem are mutually inverse, as we now show.  If $(\Q,\cdot\, ,D,R)$ is a constellation with range and 
$(\C,\circ ,D,R,\leq)$ is an ordered category with restrictions, denote by ${\bf C}(\Q)$ the ordered category with restrictions $(\Q,\circ,D,R,\leq)$ obtained as in Theorem \ref{corresp} from $\Q$, and denote by ${\bf Q}(\C)$ the constellation with range $(\C,\circ,D,R)$ obtained as in that result from the ordered category with restrictions ${\mc C}$.  

\begin{pro}  \label{1to1}
Let ${\mc C}$ be an ordered category with restrictions and ${\mc Q}$ a constellation with range.  Then 
${\bf C}({\bf Q}({\mc C}))={\mc C}$ and ${\bf Q}({\bf C}({\mc Q}))={\mc Q}$.
\end{pro}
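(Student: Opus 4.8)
The plan is to notice at the outset that neither $\mathbf{C}$ nor $\mathbf{Q}$ touches the underlying class or the unary operations $D$ and $R$: in both round trips these are carried along unchanged. So the whole proposition reduces to checking that the partial binary operations (and, in the ordered-category case, the partial order and the restriction operation) are reproduced. Moreover, since the natural order on a constellation is completely determined by its product, and since $\mathbf{C}$ always equips its output category with the natural order of the input constellation, once the products are shown to agree the orders agree automatically; thus for $\mathbf{Q}(\mathbf{C}(\mathcal{Q}))=\mathcal{Q}$ it is enough to match the constellation products, and for $\mathbf{C}(\mathbf{Q}(\mathcal{C}))=\mathcal{C}$ it is enough to match the category product, the order, and the restriction.

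For $\mathbf{Q}(\mathbf{C}(\mathcal{Q}))$, I would write $\circ$ for the derived-category product of $\mathbf{C}(\mathcal{Q})$ (so $s\circ t=s\cdot t$ whenever $R(s)=D(t)$) and recall that the restriction in $\mathbf{C}(\mathcal{Q})$ is $e|s=e\cdot s$. Then the product $s\star t$ of $\mathbf{Q}(\mathbf{C}(\mathcal{Q}))$ is $s\circ(R(s)|t)=s\circ(R(s)\cdot t)$, defined exactly when $R(s)\le D(t)$. By Result~\ref{parnormal} together with Result~\ref{2p3} this holds precisely when $R(s)\cdot t$ exists, which by Lemma~\ref{Rst} is precisely when $s\cdot t$ exists; so $\star$ and $\cdot$ have the same domain. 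For the value: since $D(R(s)\cdot t)=R(s)$ we get $s\circ(R(s)\cdot t)=s\cdot(R(s)\cdot t)$; this product exists by (Q2) because $s\cdot R(s)$ exists by (R2); and (Q1) rewrites it as $(s\cdot R(s))\cdot t$. It then remains to see $s\cdot R(s)=s$, and the clean way is via Proposition~\ref{catran}: there $(\mathcal{Q},\circ,D,R)$ is a genuine category, $R(s)$ is an identity (being $D(R(s))$ by (R1)), so $s\circ R(s)$ is defined and equals $s$ because $R(s)$ is a right identity, whence $s\cdot R(s)=s\circ R(s)=s$. Thus $s\star t=s\cdot t$ and $\mathbf{Q}(\mathbf{C}(\mathcal{Q}))=\mathcal{Q}$.

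For $\mathbf{C}(\mathbf{Q}(\mathcal{C}))$, by Theorem~\ref{corresp} the constellation $\mathbf{Q}(\mathcal{C})$ has product $s\cdot t=s\circ(R(s)|t)$ (for $R(s)\le D(t)$) and natural order equal to the given order $\le$ on $\mathcal{C}$. Applying $\mathbf{C}$: its category product is $s\cdot t$ when $R(s)=D(t)$, and since $D(t)|t=t$ (by uniqueness in (O3), as $t\le t$ and $D(t)=D(t)$) this is $s\circ(D(t)|t)=s\circ t$; when $R(s)\ne D(t)$ both are undefined, so the category products agree. Its order is the natural order on $\mathbf{Q}(\mathcal{C})$, i.e. $\le$. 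Its restriction, for $e\le D(s)$, is $e\cdot s=e\circ(R(e)|s)=e\circ(e|s)$ since $R(e)=e$; and because $e$ is an identity with $D(e|s)=e$, this equals $e|s$. So the category product, order and restriction all coincide with those of $\mathcal{C}$, giving $\mathbf{C}(\mathbf{Q}(\mathcal{C}))=\mathcal{C}$.

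The only point requiring a little care is the identity $s\cdot R(s)=s$, which is not literally among (R1)--(R4); extracting it from Proposition~\ref{catran} as above is cleaner than attempting a direct derivation from the constellation axioms. Everything else is just keeping track of when the various partial products are defined, for which the domain matching via Result~\ref{2p3} and Lemma~\ref{Rst} suffices.
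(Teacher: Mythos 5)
Your proposal is correct and follows essentially the same route as the paper: both note that $D$, $R$ and the partial order are carried along unchanged and then match the partial products, using $D(t)|t=t$ in one direction and $s\circ(R(s)|t)=s\cdot(R(s)\cdot t)=(s\cdot R(s))\cdot t=s\cdot t$ in the other. The one superfluous step is your detour through Proposition~\ref{catran} to justify $s\cdot R(s)=s$: since $R(s)=D(R(s))\in D(\mathcal{Q})$ is a right identity of the constellation and $s\cdot R(s)$ exists by (R2), this identity is immediate.
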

\begin{proof}
We begin by noting that  $D$ and $R$ are unchanged in the passage from $\C$ to ${\bf Q}({\mc \C})$ to ${\bf C}({\bf Q}({\mc \C}))$, and from $\Q$ to ${\bf C}({\mc \Q})$ to ${\bf Q}({\bf C}({\mc \Q}))$.  Similarly, the partial order $\leq$ is unchanged.

The constellation operation on ${\bf Q}({\mc C})$ is given by $x\cdot y=x\circ (R(x)|y)$ providing $R(x)\leq D(y)$, so the category operation on ${\bf C}({\bf Q}(\C))$ is given by 
$x\odot y=x\circ (R(x)|y)$ providing $R(x)=D(y)$.  But $D(y)|y=y$, so $x\odot y=x\circ y$ when each is defined, which is exactly when $R(x)=D(y)$.   % Now $x\leq y$ under the given partial order of $\C$ if and only if $D(x)\leq D(y)$ and $x=D(x)\cdot y$ in ${\bf Q}({\mc C})$ by Proposition \ref{qorder}; this is nothing but the partial order on ${\bf C}({\bf Q}({\mc C}))$.  
Restriction is of course determined by $\leq$ and $D$.

Conversely, the category operation on ${\bf C}({\mc Q})$ is $x\circ y=x\cdot y$ providing $R(x)=D(y)$.  Then the constellation operation on ${\bf Q}({\bf C}({\mc Q}))$ is $x:y=x\circ (R(x)|y)$ and is defined exactly when $R(x)\leq D(y)$, or, equivalently, if $x\cdot y$ is defined. In this case, $x:y=x\circ (R(x)| y)=x\cdot (R(x)\cdot y)=(x\cdot R(x))\cdot y=x\cdot y$.
\end{proof}

It follows that every ordered category with restrictions may equivalently be viewed as a constellation with range and vice versa.  Moreover, the relevant notion of morphism does not depend on which viewpoint is taken, as we show next.

\begin{dfn}\label{defn:rangeradiant}
A {\em range radiant} is a radiant $\rho:\Q_1\rightarrow \Q_2$ between constellations with range, additionally satisfying $(R(s))\rho=R(s\rho)$ for all $s\in \Q_1$.
\end{dfn}

The class of constellations with range is a category if arrows are taken to be range radiants, as is easily seen.
Likewise, the class of ordered categories with restrictions is itself a category in which the arrows are functors $\rho$ that are order-preserving, meaning that $s\leq t$ implies $s\rho\leq t\rho$.  Here, we do have something to check.

\begin{thm}  \label{equiv}
The category of ordered categories with restrictions is isomorphic to the category of constellations with range.
\end{thm}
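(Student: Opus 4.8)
The plan is to upgrade the object-level bijections ${\bf C}$ and ${\bf Q}$ of Proposition~\ref{1to1} to mutually inverse functors, taking the action on arrows to be ``do nothing to the underlying function''. So first I would check that if $\rho\colon\Q_1\to\Q_2$ is a range radiant then the same set map is an order-preserving functor ${\bf C}(\Q_1)\to{\bf C}(\Q_2)$, and conversely that if $F\colon\C_1\to\C_2$ is an order-preserving functor between ordered categories with restrictions then the same set map is a range radiant ${\bf Q}(\C_1)\to{\bf Q}(\C_2)$. Both make sense because the underlying class is unchanged by ${\bf C}$ and ${\bf Q}$, and in each construction $D$, $R$ and $\leq$ are carried over verbatim.

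For the forward direction: a range radiant preserves $D$ (radiant axiom) and $R$ (the extra ``range'' axiom), hence sends identities to identities, so $\rho$ will be a functor once we check it preserves $\circ$. Recall from Theorem~\ref{corresp} that the category product $\circ$ on ${\bf C}(\Q_i)$ is just the constellation product $\cdot$ restricted to pairs with $R(s)=D(t)$. So whenever $x\circ y$ is defined in ${\bf C}(\Q_1)$ we get $R(x\rho)=(R(x))\rho=(D(y))\rho=D(y\rho)$, forcing $x\rho\circ y\rho$ to be defined, and $(x\circ y)\rho=(x\cdot y)\rho=(x\rho)\cdot(y\rho)=(x\rho)\circ(y\rho)$. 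Order-preservation is then immediate: the order on ${\bf C}(\Q_i)$ is the natural order (Theorem~\ref{corresp}), so $s\leq t$ means $s=D(s)\cdot t$, whence $s\rho=(D(s))\rho\cdot t\rho=D(s\rho)\cdot t\rho\leq t\rho$ by the radiant axioms alone.

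For the converse direction: since a functor automatically preserves $D$ and $R$, once $F$ (as a set map) is shown to be a radiant ${\bf Q}(\C_1)\to{\bf Q}(\C_2)$ it is automatically a \emph{range} radiant, so the only real work is the radiant condition on products. The key preliminary observation is that an order-preserving functor must preserve restrictions: for $e\leq D(s)$ in $\C_1$ we have $(e|s)F\leq sF$ (order-preservation) and $D((e|s)F)=(D(e|s))F=eF\leq D(sF)$, so by the uniqueness built into (O3), $(e|s)F=(eF)|(sF)$. Granting that, if $s\cdot t$ is defined in ${\bf Q}(\C_1)$, i.e.\ $R(s)\leq D(t)$, then $R(sF)=(R(s))F\leq(D(t))F=D(tF)$ by order-preservation, so $sF\cdot tF$ is defined; and using $s\cdot t=s\circ(R(s)|t)$ together with restriction-preservation gives $(s\cdot t)F=(sF)\circ(R(sF)|tF)=sF\cdot tF$. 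Hence $F$ is a range radiant.

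Finally I would note that, since the assignment leaves underlying functions untouched in both directions, identity morphisms go to identity morphisms and composition is preserved (it is just composition of set maps), so we genuinely obtain functors both ways; and the two composite functors are identities, since on objects this is Proposition~\ref{1to1} and on arrows there is literally nothing to change. The only substantive content is the two compatibility checks, and the one mildly delicate point is the restriction-preservation of an order-preserving functor, which is exactly where the uniqueness clause of (O3) is used.
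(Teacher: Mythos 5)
Your proposal is correct and follows essentially the same route as the paper: both directions rest on the observation that the underlying class, $D$, $R$ and $\leq$ are untouched by the constructions of Theorem~\ref{corresp}, with the forward direction being near-immediate from the radiant axioms and the converse hinging on the fact that an order-preserving functor preserves restrictions via the uniqueness clause of (O3). You in fact spell out the forward direction and the mutual-inverse check in more detail than the paper, which dismisses them as "almost immediate"; the substantive converse computation $(s\cdot t)F=(sF)\circ(R(sF)|tF)=sF\cdot tF$ is identical.
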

\begin{proof}
Let ${\mc Q}_1$ and ${\mc Q}_2$ be constellations with range, which may also be viewed from a different perspective, as in Theorem \ref{corresp}, as ordered categories with restrictions ${\mc C}_1$ and ${\mc C}_2$.

Suppose $\rho:\Q_1\rightarrow \Q_2$ is a range radiant.  The fact that $\rho$ is a functor when $\Q_1$ and $\Q_2$ are viewed as the categories ${\mc C}_1$ and ${\mc C}_2$ is almost immediate, and it is easy to see that any radiant preserves the natural order on constellations.  %Furthermore, for $s,t\in \Q_1$ for which $s\leq t$, we have $s=D(s)\cdot t$ and so $s\rho=(D(s)\cdot t)\rho=D(s)\rho\cdot t\rho=D(s\rho)\cdot t\rho$ and so $s\rho\leq t\rho$.  So $\rho$ may also be viewed as an order-preserving functor ${\mc C}_1\rightarrow {\mc C}_2$.

Conversely, let $\rho:\C_1\rightarrow \C_2$ be an order-preserving functor.  Then $D(s)\rho=D(s\rho)$ and $R(s)\rho=R(s\rho)$ for all $s\in \C_1$.  For  $e\in D(\C_1)$ we  have $e\rho=D(e)\rho=D(e\rho)\in D(\C_2)$.  If $e\leq D(s)$ then $e\rho\leq D(s)\rho=D(s\rho)$, so $e\rho|s\rho$ exists.  Moreover since $e|s\leq s$, we have $(e|s)\rho\leq s\rho$, and so because $D((e|s)\rho)=(D(e|s))\rho=e\rho$, by uniqueness we must have $(e|s)\rho=e\rho|s\rho$. If $s,t\in \C_1$ with $s\cdot t$ existing, then $R(s)\leq D(t)$, so $R(s)\rho\leq D(t)\rho$, and so $R(s\rho)\leq D(t\rho)$, giving  $s\rho\cdot t\rho$ exists, and 
\[(s\cdot t)\rho=(s\circ (R(s)|t))\rho=s\rho\circ (R(s)|t)\rho=s\rho\circ R(s)\rho|t\rho=s\rho\circ R(s\rho)|t\rho=s\rho\cdot t\rho.\]  
So $\rho$ gives a range radiant ${\mc Q}_1\rightarrow {\mc Q}_2$.  
\end{proof}

%We say the category $C$ with set? of identities $E$ is a {\em category with restrictions} if there is a partial operation $|:E\times C\rightarrow C$ such that for all $e,f\in E$ and $s,t\in C$,
%\ben
%\item $E$ is quasiordered by the relation $\leq$
%\item $e|s$ is defined if and only if $e\leq D(s)$
%\item $D(s)|s=s$
%\item $D(e|s)=e$
%\item $R(e|s)\leq R(s)$
%\item if $R(s)\leq D(t)$, then $R(s\cdot(R(s)|t)=R(R(s)|t)$

We saw earlier that every normal constellation embeds in a constellation with range.  It follows from the above results that every normal constellation arises as a subreduct of an ordered category with restrictions.

\section{Canonical extensions of constellations with range}  \label{canonext}

We first relate canonical extensions and congruences to radiants.  The following definition was not given in \cite{constgen} but is useful here.

\begin{dfn}
Let $\C$ be a category with $\Q$ a constellation.  We say the radiant $\rho:\C\rightarrow \Q$ is {\em canonical} if it is full, surjective and for all $a,b\in \C$,
\bi
\item if $a\rho=b\rho$ then $D(a)=D(b)$, and
\item if $a\rho=b\rho$ and $R(a)=R(b)$ then $a=b$.
\ei
\end{dfn}

Recall that a constellation is composable if for all $a\in \Q$, there is $e\in D(\Q)$ for which $a\cdot e$ exists.   Trivially, every constellation with range is composable.

\begin{pro}  \label{canonrad}
Let $\Q$ be a composable constellation.  Then $\rho: C(\Q)\rightarrow \Q$ given by $(s,e)\rho=s$ is a canonical radiant.  Conversely, if $\C$ is a category and $\rho:\C\rightarrow \Q$ is a  canonical radiant, then $ker(\rho)$ is a canonical congruence, $\C/ker(\rho)\cong \Q$, and $\C\cong C(\Q)$ under the isomorphism in which $s\mapsto (s\rho,R(s)\rho)$ for all $s\in \C$.
\end{pro}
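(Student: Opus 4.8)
The plan is to treat the two directions separately and then glue them together.

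\emph{Forward direction.} Since $\Q$ is composable, Result~\ref{surad} already tells us that $\rho\colon C(\Q)\to\Q$, $(s,e)\mapsto s$, is a full surjective radiant, so it only remains to check the two extra clauses in the definition of canonical radiant. Both are immediate from the formulas $D((s,e))=(D(s),D(s))$ and $R((s,e))=(e,e)$: if $(s,e)\rho=(t,f)\rho$ then $s=t$, whence $D((s,e))=D((t,f))$; and if moreover $R((s,e))=R((t,f))$ then $e=f$, so $(s,e)=(t,f)$.

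\emph{Converse, first that $\ker(\rho)$ is a canonical congruence.} From the remark following Definition~\ref{congdef} we already know the kernel of a radiant is a congruence, so I only need the two additional properties, and these I would verify against the characterisation quoted from \cite[Corollary 3.7]{constgen} (applicable since $\C$ is a category). Its first two conditions for $\ker(\rho)$ are verbatim the two defining clauses of a canonical radiant. For the third --- $(b,c)\in\ker(\rho)$ and $a\circ b$ defined imply $(a\circ b,a\circ c)\in\ker(\rho)$ --- note that the first condition gives $D(b)=D(c)$, so $a\circ b$ defined (i.e.\ $R(a)=D(b)$) forces $a\circ c$ defined; the congruence property of $\ker(\rho)$ then yields $(a\circ b,a\circ c)\in\ker(\rho)$. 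Hence $\ker(\rho)$ is a canonical congruence, and $\C/\ker(\rho)\cong\Q$ is exactly the second half of Result~\ref{canonmap} (canonical radiants being full and surjective), via the expected isomorphism $\psi\colon[s]\mapsto s\rho$.

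\emph{Converse, then that $\C\cong C(\Q)$ via the stated map.} Since $\ker(\rho)$ is a canonical congruence on the category $\C$, Result~\ref{factext} gives $\C\cong C(\C/\ker(\rho))$ via $s\mapsto([s],[R(s)])$. I would then observe that any isomorphism $\psi$ of constellations lifts coordinatewise to an isomorphism $C(\C/\ker(\rho))\to C(\Q)$, $(x,e)\mapsto(x\psi,e\psi)$ --- a routine check that this is a bijection preserving and reflecting $D$, $R$ and $\circ$, inherited from the corresponding properties of $\psi$. Composing the two isomorphisms sends $s\mapsto([s],[R(s)])\mapsto(s\rho,R(s)\rho)$, which is the asserted map. (Alternatively one can dispense with Result~\ref{factext} and check directly that $\Phi\colon s\mapsto(s\rho,R(s)\rho)$ lands in $C(\Q)$, is a functor, is injective by the two canonical-radiant clauses, and is surjective --- the last point being the only place fullness of $\rho$ really does work, replacing a chosen preimage of $u\cdot e$ by one whose range is sent to $e$.)

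The step I expect to require the most care is pinning down that the composite isomorphism is precisely $s\mapsto(s\rho,R(s)\rho)$ rather than some superficially different map, and --- in the direct approach --- the surjectivity argument, which is the unique place where the fullness hypothesis is genuinely used; everything else is bookkeeping with the defining laws.
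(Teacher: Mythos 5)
Your proposal is correct and follows essentially the same route as the paper: Result~\ref{surad} for the forward direction, Corollary~3.7 of \cite{constgen} to see $\ker(\rho)$ is a canonical congruence, Result~\ref{canonmap} for $\C/\ker(\rho)\cong\Q$, and Result~\ref{factext} composed with the induced isomorphism to identify $\C$ with $C(\Q)$. The paper's proof is simply terser, leaving as "obvious" or "easily seen" exactly the verifications you spell out.
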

\begin{proof} By Result \ref{surad}, $\rho: C(\Q)\rightarrow \Q$ given by $(s,e)\rho=s$ is a full surjective radiant, and obviously satisfies the conditions of Definition \ref{canonrad} and hence is canonical.  For the converse, if $\C$ is a category and $\rho:\C\rightarrow \Q$ is a canonical radiant then $\C/\ker(\rho)\cong \Q$ by Result \ref{canonmap}, since it is full and surjective.  Moreover, using  \cite[Corollary 3.7]{constgen} $\ker(\rho)$ is easily seen to be a canonical congruence, and so again by Result \ref{canonmap}, $\C/\ker(\rho)\cong \Q$ under the claimed isomorphism, the rest following from Result \ref{factext}.  
\end{proof}

Although we are here mainly interested in canonical extensions of constellations with range, we next observe some useful properties of $C(\Q)$ where $\Q$ is simply a constellation.  

\begin{dfn}
If $\Q$ is a constellation, define 
\[C_I(\Q)=\{(e,f)\mid e,f\in D(\Q), e\cdot f\mbox{ exists}\}.\]
\end{dfn}

The following is immediate. 

\begin{pro}
Let $\Q$ be a constellation.  Then $C_I(\Q)$ is a subcategory of $C(\Q)$ containing $D(C(\Q))$.
\end{pro}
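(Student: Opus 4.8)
The plan is to check directly that $C_I(\Q)$ satisfies the three closure conditions that make a subclass of a category into a subcategory — closure under $\circ$, under $D$ and under $R$ — and that it contains $D(C(\Q))$. Everything rests on two elementary facts about projections. First, for $e\in D(\Q)$ we have $e\cdot e=e$, since $e=D(e)$ and $D(e)\cdot e=e$ by (Q3). Second, for $e,f\in D(\Q)$, if $e\cdot f$ exists then $e\cdot f=e$: indeed $e\cdot f$ existing is precisely $e\leq f$ in the standard quasiorder on $D(\Q)$, which by Result \ref{parnormal} agrees with the restriction of the natural quasiorder, and the latter description gives $e=D(e)\cdot f=e\cdot f$.

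The inclusion $C_I(\Q)\subseteq C(\Q)$ is immediate from the definitions: if $e,f\in D(\Q)$ with $e\cdot f$ existing, then $(e,f)$ is of the required form $(s,e')$ with $s=e\in\Q$, $e'=f\in D(\Q)$ and $s\cdot e'$ existing. For closure under $D$ and $R$, compute $D((e,f))=(D(e),D(e))=(e,e)$ and $R((e,f))=(f,f)$; both lie in $C_I(\Q)$ because $e\cdot e$ and $f\cdot f$ exist by the first fact above. The same computation identifies $D(C(\Q))$ as $\{(e,e)\mid e\in D(\Q)\}$ (every $e\in D(\Q)$ occurs as $D(e)$ with $(e,e)\in C(\Q)$), and this set is contained in $C_I(\Q)$.

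The only step requiring a little care — and the place I expect the real content to sit — is closure under $\circ$. Suppose $(e,f),(g,h)\in C_I(\Q)$ and $(e,f)\circ(g,h)$ is defined in $C(\Q)$; then $R((e,f))=D((g,h))$ forces $f=D(g)$, and the product equals $(e\cdot g,h)$. Since $e\cdot f=e\cdot D(g)$ exists, $e\cdot g$ exists by Result \ref{2p3}, and hence $e\cdot g=e\in D(\Q)$ by the second fact above; so the first coordinate of the product is again a projection. As $e\cdot g$ and $g\cdot h$ both exist, (Q2) gives that $e\cdot(g\cdot h)$ exists, and then (Q1) gives that $(e\cdot g)\cdot h$ exists; thus $(e\cdot g)\cdot h=e\cdot h$ exists, which is exactly the condition for $(e\cdot g,h)\in C_I(\Q)$. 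With closure under $\circ$, $D$, $R$ in hand, $C_I(\Q)$ inherits (C1) and (C2) from $C(\Q)$, and (C3) holds for $C_I(\Q)$ because $D(x)$ and $R(x)$ lie in $C_I(\Q)$ for every $x\in C_I(\Q)$; hence $C_I(\Q)$ is a subcategory of $C(\Q)$ containing $D(C(\Q))$.
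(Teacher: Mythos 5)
Your proof is correct: the paper states this proposition as immediate and gives no argument, and your direct verification (the two facts about projections, closure of $C_I(\Q)$ under $\circ$, $D$ and $R$, and the identification of $D(C(\Q))$ as $\{(e,e)\mid e\in D(\Q)\}$) is exactly the routine check being left to the reader. The one step with any content — that $e\cdot g=e$ and that $(e\cdot g)\cdot h$ exists via (Q2) and (Q1) — is handled correctly.
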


It is well-known how to convert a quasiordered set into a thin category (one in which there is at most one arrow between any two objects).  For notational convenience we introduce the following terminology for this process.

\begin{dfn}  \label{pocat}
Let $(Q,\leq)$ be a quasiordered set.  Let ${\mathcal C}_Q$ denote the category with objects the elements of $Q$ and such for all $x,y\in Q$, there is an arrow from $x$ to $y$ if and only if $x\leq y$, the {\em thin category determined by $Q$}.
\end{dfn}

When the constellation $(\Q,\cdot,D)$ arises from a quasiordered set $(\Q,\leq)$ as in Example \ref{eg:QU}, $C_I(\Q)=C(\Q)$ is nothing but the thin category associated with $(\Q,\leq)$.  But more importantly here, when $\Q$ is chosen to be {\em CSET} or most of the other examples, $(e,f)$ in $C_I(\Q)$ is interpreted as the identity map on the domain of $e$ but with co-domain the larger set $\Dom(f)$.  Hence the elements of $C_I(\Q)$ correspond to {\em insertions} (or inclusion morphisms): there is precisely one for each pair of objects for which the source is a subobject of the target.  

For any constellation $\Q$, the category $C(\Q)$ has $\Q$ as a canonical quotient.  However, if $\Q$ is a constellation with range, a natural copy of $\Q$ (with restricted multiplication) also arises as a {\em subcategory} of $C(\Q)$. 

\begin{dfn}
Let $\Q$ be a constellation with pre-range.  Define \[C_S(\Q)=\{(s,R(s))\mid s\in \Q\}.\]
\end{dfn}

When the concrete examples of most interest to us are represented as $C(\Q)$ with $\Q$ the constellation with range of surjective morphisms, the elements of $C_S(\Q)$ correspond to the subcategory of $C(\Q)$ consisting of surjective morphisms.  Indeed we have the following, which also provides further justification for the definition of constellations with range.

\begin{pro} \label{constIScat}
Let $\Q$ be a constellation with pre-range.  Then $D(C(\Q))\subseteq C_S(\Q)$, and $C_S(\Q)$ is a subcategory of $C(\Q)$ if and only if $\Q$ is a constellation with range.  In that case, $C_S(\Q)$ is isomorphic to the derived category obtained from $\Q$ as in Definition \ref{derived}, and hence is an ordered category with restrictions.   
\end{pro}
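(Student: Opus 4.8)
The plan is to argue directly from the explicit description of $C(\Q)$, recalling that the identities of $C(\Q)$ are exactly the pairs $(e,e)$ with $e\in D(\Q)$. First I would record the small fact that $R(e)=e$ for every projection $e$: since $e\cdot e=e$ exists, $e$ satisfies the defining conditions for membership in $e_D=\{R(e)\}$, so $R(e)=e$. Consequently each identity $(e,e)$ of $C(\Q)$ equals $(e,R(e))$, which gives $D(C(\Q))\subseteq C_S(\Q)$.

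Next I would carry out the one computation on which everything hinges. For $(s,R(s)),(t,R(t))\in C_S(\Q)$, the composite in $C(\Q)$ is defined exactly when $R((s,R(s)))=D((t,R(t)))$, i.e.\ $R(s)=D(t)$, and then it equals $(s\cdot t,R(t))$. Since $D((s,R(s)))=(D(s),D(s))=(D(s),R(D(s)))$ and $R((s,R(s)))=(R(s),R(s))=(R(s),R(R(s)))$ both lie in $C_S(\Q)$ by (R1), the subclass $C_S(\Q)$ is automatically closed under $D$ and $R$, so it is a subcategory of $C(\Q)$ if and only if $R(s\cdot t)=R(t)$ whenever $R(s)=D(t)$. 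If $\Q$ is a constellation with range, the computation recorded just before Proposition~\ref{CX} yields exactly this equality, so $C_S(\Q)$ is closed under composition, hence a subcategory.

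The reverse implication is the step I expect to require the most care. Assume $C_S(\Q)$ is a subcategory and let $s\cdot t$ exist; I must deduce the right congruence condition $R(R(s)\cdot t)=R(s\cdot t)$. Set $u=R(s)\cdot t$, which exists by Lemma~\ref{Rst}; by Result~\ref{2p3}, $D(u)=D(R(s)\cdot t)=D(R(s))=R(s)$, so the composite $(s,R(s))\circ(u,R(u))=(s\cdot u,R(u))$ is defined and, by closure of $C_S(\Q)$, lies in $C_S(\Q)$, forcing $R(u)=R(s\cdot u)$. Finally $s\cdot u=s\cdot(R(s)\cdot t)=(s\cdot R(s))\cdot t=s\cdot t$, using (R2), (Q2), (Q1) and the fact that $R(s)$ is a right identity; hence $R(R(s)\cdot t)=R(u)=R(s\cdot u)=R(s\cdot t)$, so $\Q$ is a constellation with range.

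For the last assertion, assume $\Q$ is a constellation with range and define $\varphi\colon\Q\rightarrow C_S(\Q)$ by $s\mapsto(s,R(s))$, which is a bijection with the obvious inverse. The identities computed above show $\varphi$ commutes with $D$ and $R$, while the composition computation (now invoking the range condition $R(s\cdot t)=R(t)$ for $R(s)=D(t)$) gives $\varphi(s\circ t)=(s\cdot t,R(s\cdot t))=\varphi(s)\circ\varphi(t)$, with both sides defined precisely when $R(s)=D(t)$. Thus $\varphi$ is an isomorphism of categories from the derived category of $\Q$ (Definition~\ref{derived}) onto $C_S(\Q)$; since the derived category is an ordered category with restrictions by Theorem~\ref{corresp}, transporting that structure along $\varphi$ shows $C_S(\Q)$ is one too.
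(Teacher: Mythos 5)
Your proof is correct and follows essentially the same route as the paper's: the forward direction via the computation $R(s\cdot t)=R(R(s)\cdot t)=R(t)$ when $R(s)=D(t)$, and the converse via composing $(s,R(s))$ with $(R(s)\cdot t,R(R(s)\cdot t))$ and reading off $R(s\cdot t)=R(R(s)\cdot t)$ from closure of $C_S(\Q)$. The only difference is that you spell out the isomorphism $s\mapsto(s,R(s))$ with the derived category explicitly, where the paper simply cites Theorem~\ref{corresp}; this is a harmless elaboration.
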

\begin{proof} It is immediate that $D(C(\Q))\subseteq C_S(\Q)$. Suppose the congruence condition holds on $\Q$.  Choose $(s,R(s)),(t,R(t))\in \C_S(\Q)$ and suppose $(s,R(s))\circ (t,R(t))$ exists. Then $R(s)=D(t)$, so $R(s\cdot t)=R(R(s)\cdot t)=R(t)$ and so $(s,R(s))\circ (t,R(t))=(s\cdot t,R(t))=(s\cdot t,R(s\cdot t))\in C_S(\Q)$. Thus $C_S(\Q)$  is closed under composition and is therefore a subcategory.

Conversely, suppose $C_S(\Q)$ is a subcategory, hence closed under composition.  Now for all $s,t\in \Q$ for which $s\cdot t$ (and hence $R(s)\cdot t$) exists, we have $(R(s)\cdot t,R(R(s)\cdot t))\in C_S(\Q)$. Since $D(R(s)\cdot t)=R(s)$ it follows that 
\bea
(s,R(s))\circ(R(s)\cdot t,R(R(s)\cdot t)&=&(s\cdot (R(s)\cdot t),R(R(s)\cdot t))\\
&=&((s\cdot R(s))\cdot t,R(R(s)\cdot t))\\
&=&(s\cdot t,R(R(s)\cdot t))\in C_S(\Q).
\eea
Hence $R(s\cdot t)=R(R(s)\cdot t)$ and so the congruence condition holds on $\Q$.  The final comment follows from Theorem~\ref{corresp}.
\end{proof}

\begin{pro}  \label{isIS}
If $\Q$ is a constellation with range, then for all $s\in C(\Q)$, there is a unique $s'\in C_S(\Q)$ and $i\in C_I(\Q)$ such that $s=s'\circ i$.  
\end{pro}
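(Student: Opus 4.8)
The plan is to write down the obvious candidate factorisation and then show that any factorisation of the required shape must coincide with it. Given $(s,e)\in C(\Q)$ — so $s\in\Q$, $e\in D(\Q)$ and $s\cdot e$ exists — I would set $s'=(s,R(s))$ and $i=(R(s),e)$ and claim $(s,e)=s'\circ i$. The first checks are membership: $s'\in C_S(\Q)$ holds by definition (and $(s,R(s))\in C(\Q)$ since $s\cdot R(s)$ exists by (R2)), while $i\in C_I(\Q)$ because $R(s)\in D(\Q)$ by (R1) and $R(s)\cdot e$ exists by Lemma \ref{Rst}, as $s\cdot e$ does. Next I would verify the composite is defined and equals $(s,e)$: since $R(s')=(R(s),R(s))=(D(R(s)),D(R(s)))=D(i)$ using (R1), the product exists and $s'\circ i=(s\cdot R(s),e)=(s,e)$, the last step because $R(s)$ is a projection, hence a right identity, so $s\cdot R(s)=s$. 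That disposes of existence.

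For uniqueness, suppose $(s,e)=(a,R(a))\circ(f,g)$ with $(a,R(a))\in C_S(\Q)$ and $(f,g)\in C_I(\Q)$. The key observation is that an element $(f,g)$ of $C_I(\Q)$ has $f\in D(\Q)$, hence $D((f,g))=(f,f)$; since the composite is defined, this forces $R((a,R(a)))=(R(a),R(a))=(f,f)$, so $f=R(a)$. Then $(a,R(a))\circ(f,g)=(a\cdot R(a),g)=(a,g)$, whence $(s,e)=(a,g)$ gives $a=s$ and $g=e$. Thus $(a,R(a))=(s,R(s))=s'$ and $(f,g)=(R(s),e)=i$, as required.

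I do not expect a genuine obstacle here: the argument is bookkeeping with the definitions of $C(\Q)$, $C_S(\Q)$ and $C_I(\Q)$ together with the range axioms (R1)--(R2) and Lemma \ref{Rst}. The one place to be a little careful is the uniqueness step, where one must notice that the middle object of any $C_S$-then-$C_I$ factorisation is necessarily a diagonal pair $(R(a),R(a))$, which pins down $f$ as $R(a)$; once that is seen, computing the composite immediately recovers $a=s$ and $g=e$.
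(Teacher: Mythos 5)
Your proof is correct and uses the same factorisation $(s,e)=(s,R(s))\circ(R(s),e)$ as the paper, with the membership checks and uniqueness argument carried out in essentially the same way (your uniqueness step just spells out more explicitly why the middle object must be $(R(a),R(a))$, which the paper leaves implicit).
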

\begin{proof}
For $(s,e)\in C(\Q)$, we have $s\cdot e$ exists, so $R(s)\leq e$ and we may write $(s,e)=(s,R(s))\circ (R(s),e)$, with $(s,R(s))\in C_S(\Q)$ and $(R(s),e)\in C_I(\Q)$; if $(s,e)=(t,R(t))\circ (R(t),e)$ also then $s=t\cdot R(t)=t$, establishing uniqueness. 
\end{proof}  

In most familiar concrete categories, every morphism may be factorised as an epimorphism followed by a monomorphism, although not uniquely.  What we have here is different: every morphism may be factorised uniquely as a ``surjection" (an element of $C_S(\Q)$) followed by an insertion (an element of $C_I(\Q)$).  Thus, if $f:A\rightarrow B$ is a morphism, one can generally express it as the surjection $f':A\rightarrow \Img(f)$ where $\Img(f)$ is a subobject of $B$, composed with the insertion mapping $f'': \Img(f)\rightarrow B$, and moreover this representation is unique.  Every insertion is a monomorphism but not conversely; however, the relationship between surjections and epimorphisms is more complex.  We explore this further in Section \ref{apply}.

\section{Introducing IS-categories}   \label{IScat}

Based on the properties of $C(\Q)$ where $\Q$ is a constellation with range, we here define IS-categories, which have subcategories with properties modelling those of $C_I(\Q)$ and $C_S(\Q)$ within $C(\Q)$.  We first introduce the more general notion of an I-category.
The category $C(\Q)$ is an I-category for any constellation $\Q$, so I-categories may prove to have interest for their own sake.

\begin{dfn} \label{catinsertdef}
Let $\C$ be a category.  Then it is a {\em category with insertions}, or an {\em I-category}, if there is a subcategory 
$\I_\C$ such that:
\ben[label=\textup{(I\arabic*)}]
\item $D(\C)\subseteq \I_\C$ and for $e,f\in D(\C)$, there is at most one $i\in \I_\C$ such that either ($D(i)=e$ and $R(i)=f$) or ($D(i)=f$ and $R(i)=e$), and 
\item if $s\circ i\in \I_\C$ where $s\in \C$ and $i\in \I_\C$, then $s\in \I_\C$.
\een
For $i\in \I_\C$, if $D(i)=e, R(i)=f$, write $i=i_{e,f}$.
\end{dfn}

If $\C$ is an I-category, it is easy to see that defining $\leq$ on $\I_\C$ by setting $e\leq f$ if and only if there is $i_{e,f}\in \I_\C$ gives a partial order on $D(\C)$, and that $\I_\C$ is a copy of the thin category arising from this partial order.

Evidently, if $\Q$ is a normal constellation then $C(\Q)$ is an I-category in which $\I_\C=C_I(\Q)$: (I1) is evidently satisfied, and if $(s,e)\circ (e,f)\in C_I(\Q)$, then $s\in D(\Q)$, so $(s,e)\in \I_\C$, whence so is (I2).  

The set of partial functions on a non-empty set $X$ is partially ordered by inclusion.  Similarly, in a concrete category, a natural way to define a partial order on the morphisms is to say $s\leq t$ whenever the function $s$ is a subset of the function $t$, and the co-domain of $s$ (possibly bigger than its image) is contained in that of $t$.   For concrete categories of the form $C(\Q)$ where $\Q$ is the constellation of surjections, this would be saying that $(s,e)\leq (t,f)$ providing $s\leq t$ and $e\leq f$, which is to say that $s=D(s)\cdot t$ in $\Q$, and $e\leq f$ in $D(\Q)$.  We can express this by saying that $(s,e)\circ (e,f)=(D(s),D(t))\circ (t,f)$, which given the needed compatibilities of domains and co-domains, is evidently equivalent to saying that $s\circ i=j\circ t$ for some $i,j\in C_I(\Q)$.  This turns out to be a partial order, even in a general I-category.  

%With this set-up, for $(s,f)\in C(\Q)$, if $e\leq D(s)$, one would like to be able to take the restriction of $(s,f)$ to the domain element $(e,e)$.  However, this is not well-defined, since although $e\cdot s\leq s$ exists, there is no obvious way to reduce the codomain $(f,f)$ of $(s,f)$.  In fact the only sensible thing to do is to retain it unchanged, and define $(e,e)|(s,f):=(e\cdot s,f)$.  In fact this is the largest $(t,g)\in C(\Q)$ such that $(t,g)\leq (s,f)$ and $D((t,g))\leq (e,e)$, and then in fact $D((t,g))=(e,e)$, so {\em weak restrictions} exists in $C(\Q)$. Rather than prove all this in detail now, we instead prove something more general.

\begin{dfn}
Let $\C$ be an I-category.  Define the relation $\leq_I$, the {\em I-order} on $\C$,
as follows: for all $s,t\in \C$, $s\leq_I t$ if and only if there are $i,j\in \I_\C$ such that $s\circ i$ and $j\circ t$ exist and are equal. 
\end{dfn}
 
We remark that, in the above definition, if $s\circ i=j\circ t$ then $i= i_{R(s),R(t)}$ and $j=i_{D(s),D(t)}$, which must both exist in $\I_\C$.  Thus for $e,f\in D(\C)$ we have $e\leq_I f$ if and only if $i_{e,f}\in \I_\C$ exists, so the I-order restricted to $D(\C)$ is simply the partial order on $D(\C)$ described after Definition \ref{catinsertdef}.  This gives one part of the following result.
  
\begin{pro}  \label{Icatordered}
Let $\C$ be an I-category.  Then it is an ordered category with respect to its I-order, and for $e,f\in D(\C)$, $e\leq_I f$ if and only if $i_{e,f}\in \I_\C$ exists.  Moreover, for $s\in \C$ and $i\in \I_\C$, $s\leq_I i$ implies $s\in \I_\C$.  %Finally, $\C$ has weak restriction given by $e|s=i\circ s$ whenever $e\leq D(s)$, so that $i_{e,D(s)}\in \I_\C$ exists, and it satisfies $R(e|s)=R(s)$.
\end{pro}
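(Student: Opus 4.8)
The plan is to verify the axioms (O1), (O2), (O2$'$) for the relation $\leq_I$ on $\C$, having first checked that $\leq_I$ is a partial order, and then to prove the final assertion about insertions. The remark immediately preceding the statement already does most of the work for the characterisation of $\leq_I$ on $D(\C)$, so I would begin there: if $s\circ i = j\circ t$ with $i,j\in \I_\C$, then comparing domains gives $D(s)=D(j)$ and $R(j)=D(t)$, so $j=i_{D(s),D(t)}$, and comparing ranges gives $R(i)=R(t)$ and $D(i)=R(s)$, so $i=i_{R(s),R(t)}$; both of these insertions therefore exist in $\I_\C$ whenever $s\leq_I t$. Conversely, given $s\in\C$ and insertions $i_{R(s),R(t)}$, $i_{D(s),D(t)}$ in $\I_\C$, the products $s\circ i_{R(s),R(t)}$ and $i_{D(s),D(t)}\circ t$ are defined; to get $s\leq_I t$ one must also know they are equal, which is exactly the content needed when we come to antisymmetry and the axioms, so I would keep track of when this equality is automatic (e.g. when $s$ itself lies in $\I_\C$, by (I1) uniqueness) and when it must be assumed.

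For \textbf{reflexivity}, take $i=j=D(s)$: then $s\circ D(s)=D(s)\circ s=s$ wait — rather, $s\circ R(s)=s$ and $D(s)\circ s=s$, so we need $i=R(s)\in\I_\C$ and $j=D(s)\in\I_\C$, which hold since $D(\C)\subseteq\I_\C$, giving $s\leq_I s$. For \textbf{transitivity}, suppose $s\leq_I t\leq_I u$, witnessed by $s\circ i=j\circ t$ and $t\circ i'=j'\circ u$. Using (C1)/(C2) and the fact that $\I_\C$ is a subcategory (so $j\circ j'$, $i\circ\ldots$ compose appropriately), one should be able to post-compose the first equation by $i'$ and pre-compose the second by $j$ to obtain $s\circ(i\circ i')=(j\circ j')\circ u$ after a short manipulation; the compatibility of the intermediate ranges/domains is guaranteed by the characterisation above. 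For \textbf{antisymmetry}, if $s\leq_I t$ and $t\leq_I s$ then from the characterisation $i_{R(s),R(t)}$ and $i_{R(t),R(s)}$ both exist in $\I_\C$; by the partial-order property of $\leq_I$ on $D(\C)$ (already established after Definition~\ref{catinsertdef}) this forces $R(s)=R(t)$, and similarly $D(s)=D(t)$; then $s\circ R(s)=s$ and $j\circ t = s$ with $j=i_{D(s),D(t)}=i_{D(s),D(s)}=D(s)$, so $s=D(s)\circ t=t$.

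For \textbf{(O2)} and \textbf{(O2$'$)}: $s\leq_I t$ gives $i_{D(s),D(t)}\in\I_\C$ and $i_{R(s),R(t)}\in\I_\C$, which are precisely the witnesses that $D(s)\leq_I D(t)$ and $R(s)\leq_I R(t)$ (using reflexivity-type insertions on the other side). For \textbf{(O1)}: suppose $a\leq_I b$, $c\leq_I d$, and both $a\circ c$, $b\circ d$ exist, so $R(a)=D(c)$ and $R(b)=D(d)$. From $a\leq_I b$ we have $a\circ i_{R(a),R(b)} = i_{D(a),D(b)}\circ b$, and from $c\leq_I d$ we have $c\circ i_{R(c),R(d)}=i_{D(c),D(d)}\circ d$; note $i_{D(c),D(d)}=i_{R(a),R(b)}$ since $D(c)=R(a)$ and $D(d)=R(b)$. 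Then
\[
(a\circ c)\circ i_{R(c),R(d)} = a\circ(c\circ i_{R(c),R(d)}) = a\circ(i_{R(a),R(b)}\circ d) = (a\circ i_{R(a),R(b)})\circ d = (i_{D(a),D(b)}\circ b)\circ d = i_{D(a),D(b)}\circ(b\circ d),
\]
which (after checking $i_{R(c),R(d)}=i_{R(a\circ c),R(b\circ d)}$ and $i_{D(a),D(b)}=i_{D(a\circ c),D(b\circ d)}$, immediate from the range/domain formulas for composites in a category) is exactly $a\circ c\leq_I b\circ d$. Finally, for the \textbf{insertion claim}: suppose $s\leq_I i$ with $i\in\I_\C$. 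Then there exist $j,k\in\I_\C$ with $s\circ j = k\circ i$. Since $\I_\C$ is a subcategory, $k\circ i\in\I_\C$, so $s\circ j\in\I_\C$ with $j\in\I_\C$; by axiom (I2) this forces $s\in\I_\C$, as required.

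The main obstacle I anticipate is the bookkeeping in antisymmetry and in the ``converse'' direction of the characterisation: one needs the \emph{equality} $s\circ i = j\circ t$, not merely the existence of the two sides, and it is not obvious a priori that two insertions with matching endpoints and a common ``middle'' morphism must fit together. The resolution is that the uniqueness clause in (I1), combined with $D(s)\leq_I D(t)\Rightarrow s=D(s)|t$-type reasoning available in an ordered category, pins down the composite; concretely, once $D(s)=D(t)$ in the antisymmetry step the witnessing insertion $j$ is forced to be the identity $D(s)$, which collapses the argument. Everywhere else the computation is routine associativity in a category together with repeated use of the fact that $\I_\C$ is closed under composition and the endpoint formulas $D(x\circ y)=D(x)$, $R(x\circ y)=R(y)$.
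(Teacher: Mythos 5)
Your proposal is correct and follows essentially the same route as the paper: verify that $\leq_I$ is a partial order, check (O1), (O2), (O2$'$) via the forced identification of the witnessing insertions by (I1), and deduce the final claim from (I2) together with closure of $\I_\C$ under composition. The only (harmless) variation is in antisymmetry, where you invoke the already-established partial order on $D(\C)$ to get $R(s)=R(t)$ and $D(s)=D(t)$ first, whereas the paper derives $i_1\circ i_2=R(s)$, $j_1\circ j_2=D(s)$, etc., and then applies (I1) — both arguments rest on the same uniqueness clause.
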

\begin{proof}
Let $s,t,u\in \C$.  First, note that $D(s)\circ s=s\circ R(s)$, so $s\leq_I s$.  Secondly, if $s\leq_I t$ and $t\leq_I s$ then there are $i_1,i_2,j_1,j_2\in \I_\C$ such that $s\circ i_1=j_1\circ t$ and $t\circ i_2=j_2\circ s$.  It follows that
$s\circ i_1\circ i_2=j_1\circ j_2\circ s$, so that 
 $D(s)=R(j_2)=D(j_1)$ and so by uniqueness,
$j_1\circ j_2=D(s)$; similarly, $i_1\circ i_2=R(s)$, $i_2\circ i_1=R(t)$ and $j_2\circ j_1=D(t)$. Making use of (I1) we now obtain that $i_1=i_2=R(s)=R(t)$ and $j_1=j_2=D(s)=D(t)$, and
 so $s=t$.  Finally, if $s\leq_I t$ and $t\leq_I u$, then there are $i_1,i_2,j_1,j_2\in \I_\C$ such that  $s\circ i_1=j_1\circ t$ and $t\circ i_2=j_2\circ u$, so $j_1\circ t\circ i_2$ exists and equals both $s\circ i_1\circ i_2$ and $j_1\circ j_2\circ u$.  But since $i_1\circ i_2,j_1\circ j_2\in \I_\C$, this shows that $s\leq_I u$. Hence $\leq_I$ is a partial order on $\C$.

We now show $\C$ is an ordered category. Suppose that $s,t,u,v\in \C$, with $s\leq_I t,u\leq_I v$, and $s\circ u,t\circ v$ exist.  Then there are $i_1,i_2,j_1,j_2\in \I_\C$ such that $s\circ i_1=j_1\circ t$ and $u\circ i_2=j_2\circ v$.  Then since $s\circ u$ and $t\circ v$ exist, $D(j_2)=D(u)=R(s)=D(i_1)$, and $R(j_2)=D(v)=R(t)=R(i_1)$, so by uniqueness, $i_1=j_2$ and so  the following all exist and
$$s\circ u\circ i_2=s\circ j_2\circ v=s\circ i_1\circ v=j_1\circ t\circ v, $$
giving that $s\circ u\leq_I t\circ v$.  So (O1) holds.  

Now if $s,t\in \C$ with $s\leq_I t$, then there are $i,j\in \I_\C$ for which $s\circ i=j\circ t$, so $D(s)=D(s\circ i)=D(j\circ t)=D(j)$, while $R(j)=D(t)$, so $D(s)=D(j)\leq_I R(j)=D(t)$.  Furthermore, $R(i)=R(s\circ i)=R(j\circ t)=R(t)$, and because $s\circ i$ exists, $R(s)=D(i)\leq_I R(i)=R(t)$  Hence (O2) and (O2$'$) hold.  Hence $\C$ is an ordered category under $\leq_I$.  Moreover if $s\in \C, i\in \I_\C$ and $s\leq_I i$ then there are $j,k\in \I_\C$ for which $s\circ j =k\circ i\in \I_\C$, so $s\in \I_\C$.
%
%Now $e|s=i\circ s$ where $i=i_{e,D(s)}$.  So $i\circ s=(i\circ s)\circ R(s)$, and so $e|s=i\circ s\leq_I s$, with $D(e|s)=D(i)=e$ and $R(e|s)=R(s)$.  If $x\leq_I s$ with $D(x)\leq_I e$, then there are $j_1,j_2\in \I_\C$ for which $j_1\circ s=x\circ j_2$, so $D(j_1)=D(x)$, and so $j_1=i_{D(x),D(s)}=i_{D(x),e}\circ i_{e,D(s)}$, so $x\circ j_2=i_{D(x),e}\circ i_{e,D(s)}\circ s=i_{D(x),e}\circ i\circ s=i_{D(x),e}\circ (e|s)$, and so $x\leq_I e|s$.
\end{proof}

\begin{lem}  \label{inI}
If $\C$ is an I-category with $s\in \C$, then $s\in \I_\C$ if and only if $D(s)\leq_I s$.
\end{lem}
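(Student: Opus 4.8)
The plan is to prove the biconditional in Lemma~\ref{inI} by handling the two implications separately. The forward direction is essentially immediate: if $s\in \I_\C$, then since $D(s)\in D(\C)\subseteq \I_\C$ and $D(s)\circ s = s$, we can take $i = R(s)\in \I_\C$ and $j = D(s)\in \I_\C$ to witness $s\circ R(s) = D(s)\circ s$, which is exactly the defining condition for $D(s)\leq_I s$ (using that identities lie in $\I_\C$ as distinguished insertions $i_{e,e}$).

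For the converse, suppose $D(s)\leq_I s$. The key observation is that this is an instance of the last clause of Proposition~\ref{Icatordered}: that result states that for $s\in \C$ and $i\in \I_\C$, $s\leq_I i$ implies $s\in \I_\C$. Here I want to apply it with the roles arranged so that $D(s)$ plays the part of ``$i$''. Since $D(s)\in D(\C)\subseteq \I_\C$, and we are assuming $D(s)\leq_I s$, I need a statement about $s\leq_I$ (an insertion); but actually the hypothesis $D(s)\leq_I s$ together with $D(s)\in\I_\C$ is the relevant configuration. Unwinding the definition of $\leq_I$: $D(s)\leq_I s$ means there exist $i,j\in \I_\C$ with $D(s)\circ i = j\circ s$ and these exist. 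Since $D(D(s)) = D(s) = R(D(s))$, the left side $D(s)\circ i$ forces $D(i) = R(D(s)) = D(s)$ (wait—$D(s)\circ i$ exists means $R(D(s)) = D(i)$, i.e. $D(i) = D(s)$), and $D(s)\circ i = i$ since $D(s)$ is an identity. On the right, $j\circ s$ exists means $R(j) = D(s)$, and comparing domains, $D(i) = D(D(s)\circ i) = D(j\circ s) = D(j)$. So $D(j) = D(i) = D(s)$ and $R(j) = D(s)$, whence $j = i_{D(s),D(s)} = D(s)$ by the uniqueness in (I1). Therefore $D(s)\circ i = D(s)\circ s = s$, so $i = s$... but $i\in \I_\C$, giving $s\in \I_\C$ directly.

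So the cleanest route for the converse is to unwind $D(s)\leq_I s$ directly rather than invoking Proposition~\ref{Icatordered}: from the witnessing equation $D(s)\circ i = j\circ s$ with $i,j\in \I_\C$, deduce $j = D(s)$ by uniqueness (comparing domains and codomains of identities forces this), conclude $i = j\circ s = D(s)\circ s = s$, and hence $s = i\in \I_\C$. I should be careful to state clearly why $j\circ s$ existing plus $D(s)$ being an identity pins down $j$: both $D(j)$ and $R(j)$ equal $D(s)$, so $j = i_{D(s),D(s)}$, which must be $D(s)$ itself since $D(s)\in D(\C)\subseteq\I_\C$ already provides such an insertion and (I1) gives uniqueness.

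The main (very minor) obstacle is just bookkeeping with the domain/codomain compatibility conditions for the composites in the definition of $\leq_I$, and being careful that the uniqueness clause (I1) is applied to the right pair of objects. There is no real mathematical difficulty here; the lemma is a small structural fact that falls out once the definition of the I-order is unwound against the hypothesis that one side is an identity.

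\begin{proof}
($\Rightarrow$) If $s\in \I_\C$, then $D(s)\in D(\C)\subseteq \I_\C$ and $D(s),R(s)$ are identities lying in $\I_\C$.  Since $D(s)\circ s = s = s\circ R(s)$, taking $i=R(s)$ and $j=D(s)$ in $\I_\C$ witnesses $D(s)\leq_I s$.

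($\Leftarrow$) Suppose $D(s)\leq_I s$.  Then there exist $i,j\in \I_\C$ such that $D(s)\circ i$ and $j\circ s$ exist and $D(s)\circ i = j\circ s$.  Since $D(s)$ is an identity, $D(s)\circ i = i$, and the existence of $j\circ s$ gives $R(j)=D(s)$.  Comparing domains, $D(i)=D(D(s)\circ i)=D(j\circ s)=D(j)$; but also $D(i)=D(s)$ because $D(s)\circ i$ exists forces $D(i)=R(D(s))=D(s)$.  Hence $D(j)=R(j)=D(s)$, so $j=i_{D(s),D(s)}$ in $\I_\C$; since $D(s)\in D(\C)\subseteq \I_\C$ is such an insertion, uniqueness in (I1) gives $j=D(s)$.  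Therefore $s = D(s)\circ s = j\circ s = D(s)\circ i = i\in \I_\C$.
\end{proof}
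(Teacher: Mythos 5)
Your converse direction is correct and follows the same route as the paper: existence of the composites pins down $D(j)=R(j)=D(s)$, uniqueness in (I1) forces $j=D(s)$, and then $i=D(s)\circ i=j\circ s=s\in\I_\C$.

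The forward direction, however, has a genuine error. To witness $D(s)\leq_I s$ you must exhibit $i,j\in\I_\C$ with $D(s)\circ i=j\circ s$; that is, $i$ is composed on the right of the \emph{smaller} element $D(s)$ and $j$ on the left of the \emph{larger} element $s$. Your choice $i=R(s)$, $j=D(s)$ would require the composite $D(s)\circ R(s)$ to exist, which happens only when $D(s)=R(s)$; the identity $D(s)\circ s=s\circ R(s)$ you cite is the witnessing equation for the reflexivity $s\leq_I s$, not for $D(s)\leq_I s$. A telltale sign that something is off is that your witnesses never use the hypothesis $s\in\I_\C$: if $D(s)\leq_I s$ held for every $s$, then your (correct) converse would force $\I_\C=\C$. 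The repair is immediate: take $i=s$ (which lies in $\I_\C$ precisely because of the hypothesis) and $j=D(s)$, so that both sides of the witnessing equation equal $D(s)\circ s=s$. This is exactly what the paper does.
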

\begin{proof}
If $i\in \I_\C$ then $D(i)\circ i=D(i)\circ i$, so since $D(i)\in \I_\C$, we have $D(i)\leq_I i$.  Conversely, if $D(s)\leq_I s$, then there are $i,j\in \I_\C$ for which $D(s)\circ i=j\circ s$. We have $D(s)=D(j)$ and  $R(j)=D(s)$, giving $j=D(s)$, and we then obtain $i=s$, so $s\in \I_\C$.
\end{proof}

A useful characterisation of order-preserving functors between I-categories is as follows.

\begin{pro}  \label{Ipres}
Let $\C_1,\C_2$ be I-categories, ordered by their I-orders as in Proposition \ref{Icatordered}. Let  $F:\C_1\rightarrow \C_2$ be a functor.  Then $F$ is order-preserving if and only $\I_{\C_1}F\subseteq \I_{\C_2}$.
\end{pro}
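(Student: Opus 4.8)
The statement is an if-and-only-if, and the forward direction (order-preserving $\Rightarrow$ $\I_{\C_1}F\subseteq\I_{\C_2}$) should be the easy one, while the converse ($\I_{\C_1}F\subseteq\I_{\C_2}$ $\Rightarrow$ order-preserving) is where the real content lies. The plan is to leverage the characterisations already established: Lemma \ref{inI} says $s\in\I_\C$ iff $D(s)\leq_I s$, and the definition of $\leq_I$ unpacks in terms of the existence of certain insertion arrows $i_{e,f}$. Since $F$ is a functor, it automatically commutes with $D$ and $R$ and preserves composition, so the whole argument reduces to tracking how $F$ moves insertions.

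For the forward direction, suppose $F$ is order-preserving. Let $i\in\I_{\C_1}$. By Lemma \ref{inI}, $D(i)\leq_I i$. Applying $F$ and using that $F$ is order-preserving gives $D(i)F\leq_I iF$, i.e.\ $D(iF)\leq_I iF$ (using $D(i)F=D(iF)$ since $F$ is a functor). By Lemma \ref{inI} again, applied in $\C_2$, this gives $iF\in\I_{\C_2}$. Hence $\I_{\C_1}F\subseteq\I_{\C_2}$.

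For the converse, suppose $\I_{\C_1}F\subseteq\I_{\C_2}$, and let $s,t\in\C_1$ with $s\leq_I t$. By definition there are $i,j\in\I_{\C_1}$ with $s\circ i=j\circ t$ (and, as remarked after the definition of $\leq_I$, necessarily $i=i_{R(s),R(t)}$ and $j=i_{D(s),D(t)}$). Apply $F$: since $F$ is a functor, $sF\circ iF=jF\circ tF$, and both products are defined. By hypothesis $iF,jF\in\I_{\C_2}$. But the very existence of $i,j\in\I_{\C_2}$ with $sF\circ iF$ and $jF\circ tF$ defined and equal is exactly the statement $sF\leq_I tF$. Hence $F$ is order-preserving.

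I expect the main (mild) obstacle to be purely bookkeeping: making sure that the domain/range compatibilities needed for the composites $s\circ i$, $j\circ t$ to exist are transported correctly under $F$, which follows immediately from $F$ being a functor (so $D(sF)=D(s)F$, $R(sF)=R(s)F$, and $F$ preserves the existence and value of $\circ$). No subtle case analysis or appeal to the I-category axioms (I1)--(I2) beyond what is already packaged into Lemma \ref{inI} should be required; the proof is essentially a two-line application of that lemma in each direction.
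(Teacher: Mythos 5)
Your proof is correct and follows essentially the same route as the paper's: the forward direction via Lemma \ref{inI} applied to $D(i)\leq_I i$ and its image, and the converse by applying $F$ to the witnessing equation $s\circ i=j\circ t$ and invoking the hypothesis on insertions. No gaps.
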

\begin{proof}
Suppose $F$ is order-preserving, and pick $i\in \I_{\C_1}$. Let $e=D(i)$, and so  $e\leq_I i$ by Lemma~\ref{inI}.  Then $eF\leq_I iF$, and $eF=D(i)F=D(iF)$, so $D(iF)\leq_I iF$.  By Lemma \ref{inI}, $iF\in \I_{\C_2}$.

Conversely, suppose $\I_{\C_1}F\subseteq \I_{\C_2}$.  Then if $s,t\in \C_1$ with $s\leq_I t$, there are $i,j\in \C_1$ such that $s\circ i=j\circ t$, so applying $F$ we obtain $(sF)\circ (iF)=(jF)\circ (tF)$.  Since $iF,jF\in \I_{\C_2}$, it follows that $sF\leq_I tF$ under the I-order. 
\end{proof}

As previously noted, if $\Q$ is a normal constellation, then $C(\Q)$ is an I-category.  However, these two kinds of objects do not correspond in a one-to-one manner.   In \cite{constgen}, the following example of a constellation is given.  Let $\Q=\{s,e,f,g\}$, in which $D(\Q)=\{e,f,g\}$ with $D(s)=g$ and $s\cdot e,s\cdot f$ both existing but no other products existing aside from those that must.  (This can be realised as a constellation of partial functions, as shown in \cite{constgen}.) Then $\C=C(\Q)=\{(s,e),(s,f),(e,e),(f,f),(g,g)\}$ is an I-category, in which $\I_\C=C_I(\Q)=D(\C)=\{(e,e),(f,f),(g,g)\}$.    However, $\C\cong C(\C)$ (viewing $\C$ as a constellation), and so $C(\C)\cong C(\Q)$ with the insertion notions corresponding under the isomorphism, yet $\C$ and $\Q$ are clearly not isomorphic.  So the correspondence is not one-to-one.  However, it is possible that every I-category is isomorphic to one of the form $C(\Q)$ with $\I_\C$ corresponding to $C_I(\Q)$. 

Abstracting the properties of $C(\Q)$ summed up in the I-category definition and Proposition \ref{isIS} leads to the following.

\begin{dfn}\label{dfn:IS}
Let $\C$ be a category.  Then it is a {\em category with insertions and surjections}, or an {\em IS-category}, if is has  subcategories $\I_\C$
and $\Su_\C$, each containing all of $D(\C)$, such that 
\ben[label=\textup{(IS\arabic*)}]
\item for $e,f\in D(\C)$, there is at most one $i\in \I_\C$ such that either ($D(i)=e$ and $R(i)=f$) or ($D(i)=f$ and $R(i)=e$), and 
\item for all $a\in \C$, there are unique $s_a\in \Su_\C$ and $i_a\in \I_\C$ such that $a=s_a\circ i_a$.  
\een
\end{dfn}

Condition (I1) in the definition of an I-category appears in Definition~\ref{dfn:IS} as (IS1), whereas (I2) is missing here  because it is redundant.  

\begin{pro}  \label{ISisI}
Every IS-category is an I-category, in which $\I_\C$ is chosen as for the IS-category.  
\end{pro}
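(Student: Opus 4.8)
The plan is to verify conditions (I1) and (I2) of Definition~\ref{catinsertdef} directly, taking $\I_\C$ to be the same subcategory that comes with the IS-category structure. Condition (I1) requires $D(\C)\subseteq\I_\C$, which is part of Definition~\ref{dfn:IS}, together with the at-most-one-$i$ clause, which is verbatim (IS1). So (I1) is immediate and the entire content of the proposition is (I2): if $s\in\C$, $i\in\I_\C$ and $s\circ i\in\I_\C$, then $s\in\I_\C$.

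For (I2) the idea is to use the unique factorization of (IS2) twice. First, apply (IS2) to $s$ itself to write $s=s_s\circ i_s$ with $s_s\in\Su_\C$ and $i_s\in\I_\C$. Then
$$s\circ i=s_s\circ i_s\circ i=s_s\circ(i_s\circ i),$$
and since $\I_\C$ is a subcategory it is closed under $\circ$, so $i_s\circ i\in\I_\C$; thus this displays $s\circ i$ as a surjection followed by an insertion. On the other hand, by hypothesis $s\circ i\in\I_\C$, and $D(s\circ i)\in D(\C)\subseteq\Su_\C$ is an identity, so $s\circ i=D(s\circ i)\circ(s\circ i)$ is a second such factorization (the composite being defined because $R(D(s\circ i))=D(s\circ i)=D(s\circ i)$).

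The key step — and the only place where anything happens — is to invoke the uniqueness in (IS2): it forces $s_s=D(s\circ i)=D(s)$ and $i_s\circ i=s\circ i$. From $s_s=D(s)$ we get $D(i_s)=R(s_s)=R(D(s))=D(s)$, so $s=s_s\circ i_s=D(s)\circ i_s=i_s\in\I_\C$, which is exactly (I2). I do not expect a genuine obstacle here; the one thing to be careful about is checking that the two factorizations really satisfy the hypotheses of (IS2) (domains/codomains match, both factors lie in the required subcategories), which is routine. It may be worth remarking afterwards that (IS1) is not used in establishing (I2), consistent with the observation in the text that (I2) is redundant given (IS1) and (IS2).
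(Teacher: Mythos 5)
Your proof is correct and follows essentially the same argument as the paper: both factor $s\circ i$ in two ways as a surjection followed by an insertion (once via $s=s_s\circ i_s$, once trivially via the identity $D(s\circ i)\in\Su_\C$) and invoke the uniqueness clause of (IS2) to conclude $s_s=D(s)$ and hence $s=i_s\in\I_\C$. No gaps.
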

\begin{proof}
It is only necessary to show that if $k=a\circ i\in \I_\C$ where
$i\in \I_\C$, then $a\in \I_\C$.  But since $a=s_a\circ i_a$, we have  $D(k)\circ k=k=a\circ i=s_a\circ (i_a\circ i)$.  By uniqueness, $D(k)=s_a$ and so $a=D(k)\circ i_a\in \I_\C$.
\end{proof}

From Proposition \ref{isIS}, we immediately obtain the following.

\begin{cor}  \label{extIS}
Let $\Q$ be a constellation with range.  Then $\C=C(\Q)$ is an IS-category in which $\I_\C=C_I(\Q)$ and $\Su_\C=C_S(\Q)$. Moreover, for $a=(s,e)\in \C(\Q)$ we have
$s_a=(s,R(s))$ and $i_a=(R(s),e)$.
\end{cor}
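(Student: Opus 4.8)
The plan is to obtain this as essentially a repackaging of Propositions~\ref{constIScat} and~\ref{isIS}, checking by hand only the one extra axiom (IS1). First I would pin down the bookkeeping for $D$ and $R$ on pairs: the identities of $C(\Q)$ are exactly the pairs $(g,g)$ with $g\in D(\Q)$, so $D(C(\Q))=\{(g,g)\mid g\in D(\Q)\}$; and for $i=(g,h)\in C_I(\Q)$ one has $D(i)=(g,g)$ and $R(i)=(h,h)$. This is the only computation requiring a little care, and it is entirely routine.

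Next I would check that $\I_\C:=C_I(\Q)$ and $\Su_\C:=C_S(\Q)$ are subcategories of $C(\Q)$ each containing $D(C(\Q))$. For $\I_\C$ this is exactly the earlier proposition recording that $C_I(\Q)$ is a subcategory of $C(\Q)$ containing $D(C(\Q))$ (note that $g\cdot g$ exists for every projection $g$, so $(g,g)\in C_I(\Q)$). For $\Su_\C$, since $\Q$ is a constellation with range, Proposition~\ref{constIScat} gives that $C_S(\Q)$ is a subcategory of $C(\Q)$ and also that $D(C(\Q))\subseteq C_S(\Q)$; explicitly, $(g,g)=(g,R(g))$ because $R(g)=g$ for $g\in D(\Q)$ by (R1).

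Then I would verify (IS1). Given $e=(g,g)$ and $f=(h,h)$ in $D(\C)$, any $i\in \I_\C$ with $D(i)=e$ and $R(i)=f$ must, by the computation above, be the pair $(g,h)$; this is the unique candidate, and it lies in $C_I(\Q)$ precisely when $g\cdot h$ exists. The symmetric case $D(i)=f$, $R(i)=e$ likewise forces $i=(h,g)$. Hence at most one such $i$ exists and (IS1) holds.

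Finally, (IS2) is precisely the content of Proposition~\ref{isIS}: every $a\in C(\Q)$ factors uniquely as $s_a\circ i_a$ with $s_a\in C_S(\Q)=\Su_\C$ and $i_a\in C_I(\Q)=\I_\C$. The proof of that proposition already exhibits the factorisation $(s,e)=(s,R(s))\circ(R(s),e)$ and establishes its uniqueness, which delivers the ``moreover'' clause $s_a=(s,R(s))$, $i_a=(R(s),e)$. There is no genuine obstacle here, as the corollary label suggests; the only mildly fiddly point is the handling of $D$ and $R$ on pairs used for (IS1), and that is dispatched in the first step.
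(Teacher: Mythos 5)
Your overall route is the same as the paper's: the corollary is read off from Proposition~\ref{isIS} (which supplies (IS2) together with the explicit factorisation $(s,e)=(s,R(s))\circ(R(s),e)$ and its uniqueness, hence the ``moreover'' clause), with the subcategory requirements coming from Proposition~\ref{constIScat} and the earlier observation that $C_I(\Q)$ is a subcategory of $C(\Q)$ containing $D(C(\Q))$. Most of the write-up is fine.

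However, your verification of (IS1) is incomplete. The axiom requires that for $e=(g,g)$ and $f=(h,h)$ there be at most one $i\in\I_\C$ satisfying the \emph{disjunction} ``($D(i)=e$ and $R(i)=f$) or ($D(i)=f$ and $R(i)=e$)''. You have shown that each disjunct admits at most one witness, namely $(g,h)$ and $(h,g)$ respectively, and then concluded ``at most one such $i$ exists''; but that does not follow, since you have not excluded both witnesses being present at once. If $g\neq h$ and both $g\cdot h$ and $h\cdot g$ existed in $\Q$, then $(g,h)$ and $(h,g)$ would be two distinct elements of $C_I(\Q)$ satisfying the disjunction, and (IS1) would fail. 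What closes this is that $\Q$, having (pre-)range, is normal by (R4) of Proposition~\ref{Rlaws}, so $g\cdot h$ and $h\cdot g$ both existing forces $g=h$, whence the two candidates coincide. This is exactly the role normality plays in the paper's remark, following Definition~\ref{catinsertdef}, that $C(\Q)$ is an I-category when $\Q$ is a \emph{normal} constellation. Adding that one line makes your proof complete and in line with the paper's.
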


Some useful facts follow.  First, note that in any IS-category $\C$ we have $D(\C)\subseteq \Su_\C\cap \I_\C$.  In fact, the opposite inclusion is also true.

\begin{pro}  \label{DCint}
Let $\C$ be an IS-category.  Then $\Su_\C\cap \I_\C=D(\C)$, and $\I_\C$ consists of monomorphisms. 
\end{pro}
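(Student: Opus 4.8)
The plan is to prove the two assertions separately. For the first, $\Su_\C\cap\I_\C=D(\C)$: the inclusion $D(\C)\subseteq\Su_\C\cap\I_\C$ is already noted, so I need only the reverse. Take $a\in\Su_\C\cap\I_\C$. The idea is to exploit the uniqueness of the factorisation in (IS2) by writing $a$ in two ways as a surjection followed by an insertion. On one hand, $a=a\circ R(a)$ with $a\in\Su_\C$ and $R(a)=D(R(a))\in D(\C)\subseteq\I_\C$; on the other, $a=D(a)\circ a$ with $D(a)\in D(\C)\subseteq\Su_\C$ and $a\in\I_\C$. By the uniqueness clause of (IS2), these two factorisations must coincide, so $s_a=a=D(a)$, hence $a\in D(\C)$.

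For the second assertion, that every $i\in\I_\C$ is a monomorphism: I would argue directly. Suppose $x\circ i=y\circ i$ for some $x,y\in\C$; I must show $x=y$. Write the (IS2)-factorisations $x=s_x\circ i_x$ and $y=s_y\circ i_y$ with $s_x,s_y\in\Su_\C$ and $i_x,i_y\in\I_\C$. Then $x\circ i=s_x\circ(i_x\circ i)$ and $y\circ i=s_y\circ(i_y\circ i)$, and here $i_x\circ i,\ i_y\circ i\in\I_\C$ since $\I_\C$ is a subcategory (composability is automatic because the composites $x\circ i$ and $y\circ i$ exist, forcing $R(i_x)=D(i)=R(i_y)$). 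So both sides are displayed as a surjection composed with an insertion; since $x\circ i=y\circ i$, uniqueness in (IS2) gives $s_x=s_y$ and $i_x\circ i=i_y\circ i$. Now $i_x\circ i$ and $i_y\circ i$ are both in $\I_\C$ with the same domain $D(i_x)$ and the same codomain $R(i)$, so by (IS1) we get $i_x\circ i=i_y\circ i$ forces $i_x=i_y$ (both being the unique insertion $i_{D(i_x),R(i)}$, once we observe $i_x$ and $i_y$ have the same domain — which follows since $D(i_x)=D(s_x\circ i_x)$... wait, no: $D(i_x)=R(s_x)=R(s_y)=D(i_y)$). Hence $x=s_x\circ i_x=s_y\circ i_y=y$.

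The main obstacle I anticipate is the bookkeeping of domains and codomains needed to invoke (IS1) in the monomorphism argument: I must verify that $i_x$ and $i_y$ genuinely share both endpoints before concluding they are equal, and the shared-domain fact comes out of $s_x=s_y$ (since $D(i_x)=R(s_x)$) rather than being immediate. Everything else is a mechanical application of the uniqueness in (IS2) and the at-most-one clause (IS1); there is no deep content, just careful tracking of the factorisations. I would also remark that the monomorphism claim could alternatively be deduced from Proposition~\ref{Icatordered} together with the fact that $\I_\C$ is a copy of a thin category, but the direct argument via (IS2) is cleaner and self-contained.
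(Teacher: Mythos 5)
Your proof is correct and follows essentially the same route as the paper's: the first claim via the two factorisations $a=a\circ R(a)=D(a)\circ a$ and uniqueness in (IS2), and the second by factoring $x,y$, applying uniqueness to $x\circ i=y\circ i$, and then invoking (IS1) to get $i_x=i_y$. The domain-tracking step you worried about ($D(i_x)=R(s_x)=R(s_y)=D(i_y)$) is exactly the point the paper also uses, just left more implicit there.
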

\begin{proof}
If $s\in \Su_\C\cap \I_\C$, then $s=s\circ R(s)=D(s)\circ s$ are both ways to decompose $s$ into an element of $\Su_\C$ followed by an element of $\I_\C$, and so it follows that $s\in D(\C)$.  Next, suppose $i\in \I_\C$, and that $a\circ i=b\circ i$ for some $a,b\in \C$, and so $s_a\circ i_a\circ i=s_b\circ i_b\circ i$.  But by uniqueness, $s_a=s_b$ and $i_a\circ i=i_b\circ i$, so $D(i_a)=D(i_b)$ and $R(i_a)=D(i)=R(i_b)$, and then by (IS1), we have $i_a=i_b$.  So $a=s_a\circ i_a=s_b\circ i_b=b$, and so $i$ is a monomorphism.
\end{proof}

A further useful fact is the following, similar in form to (I2).

\begin{lem}  \label{useful}
Let $\C$ be an IS-category.  For $a\in \C$ and $i\in \I_\C$, if $a\circ i\in \Su_\C$, then $a\in \Su_\C$ and $i=R(a)$.
\end{lem}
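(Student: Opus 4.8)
The plan is to play the uniqueness of the insertion–surjection factorisation (IS2) against the rigidity of $\I_\C$ expressed by (IS1). First I would factor $a$ itself: by (IS2) write $a = s_a\circ i_a$ with $s_a\in\Su_\C$ and $i_a\in\I_\C$. Since $i$ is composable after $a$ we have $R(i_a)=R(a)=D(i)$, so $i_a\circ i$ is defined, and it lies in $\I_\C$ because $\I_\C$ is a subcategory; hence $a\circ i = s_a\circ(i_a\circ i)$ is an insertion–surjection factorisation of $a\circ i$. But $a\circ i$ is assumed to be in $\Su_\C$, so its canonical factorisation is the trivial one $a\circ i = (a\circ i)\circ R(a\circ i)$ with $R(a\circ i)=R(i)\in D(\C)\subseteq\I_\C$. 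Comparing these two factorisations via the uniqueness clause of (IS2) gives $s_a = a\circ i$ and, crucially, $i_a\circ i = R(i)$ — an identity of $\C$.

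Next I would squeeze structural information out of $i_a\circ i = R(i)$. Applying $D$ to both sides yields $D(i_a)=R(i)$, while $R(i_a)=R(a)=D(i)$ because $a\circ i$ is defined. Thus $i_a$ and $i$ are insertions between the same unordered pair of projections $\{D(i),R(i)\}$, running in opposite directions, so (IS1) forces $i_a=i$. Comparing domains now gives $D(i)=D(i_a)=R(i)$, i.e. $D(i)=R(i)$. Then both $i$ and the identity $D(i)$ (which lies in $D(\C)\subseteq\I_\C$) are elements of $\I_\C$ whose domain and range both equal $D(i)$, so a second application of (IS1) forces $i=D(i)$; that is, $i$ is an identity.

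Finally, since $a\circ i$ is defined we have $D(i)=R(a)$, so the identity $i$ is precisely $R(a)$. Consequently $a = a\circ R(a) = a\circ i\in\Su_\C$ by hypothesis, which completes the proof.

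The step I would flag as the main obstacle is the double appeal to (IS1): once to collapse $i_a$ and $i$ into a single insertion (forcing $D(i)=R(i)$), and once to recognise that an insertion which is a loop at a single object must be an identity. Neither conclusion is formal category theory — absent (IS1) one could only deduce that $i$ is a monomorphism admitting a two-sided inverse equal to itself, which of course need not be an identity — so getting the bookkeeping of domains and ranges exactly right so that both hypotheses of (IS1) are genuinely met is where the argument has to be handled with care.
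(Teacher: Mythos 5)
Your proof is correct and follows essentially the same route as the paper's: compare the factorisation $a\circ i=(a\circ i)\circ R(i)$ with $s_a\circ(i_a\circ i)$, use uniqueness in (IS2) to get $i_a\circ i=R(i)$, and then apply (IS1) to conclude $i=i_a=R(a)$. The paper's version is merely terser, leaving the two applications of (IS1) that you spell out as implicit steps.
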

\begin{proof}
With $i,a$ as given, we have $a\circ i=(a\circ i)\circ R(i)=s_a\circ (i_a\circ i)$ so that by uniqueness, $a\circ i=s_a$ and $R(i)=i_a\circ i$. It follows that  $R(a)=D(i)= R(i_a)$ and $R(i)= D(i_a)$,  so $i=i_a=R(a)$.  Hence $a=s_a\in \Su_\C$.
\end{proof}

Every category of the form $C(\Q)$ where $\Q$ is a constellation with range is an IS-category, so all the usual concrete categories have subcategories of surjective morphisms which can be viewed as constellations with range, and are therefore IS-categories; the categories of sets, groups, rings, topological spaces, posets, and so on, are all IS-categories.

There is some similarity with (orthogonal) factorization systems, where one distinguishes two subcategories of a category (modelling the epimorphisms and monomorphisms in a concrete category in which every morphism may be written as a product of an epimorphism followed by a monomorphism).  The differences here are that the two subcategories in an IS-category only contain $D(\C)$ rather than all isomorphisms as in a factorization system.  We return to this topic in Section \ref{apply}.

\section{Basic category concepts for IS-categories}  \label{apply}

Next, we consider how some well-known concepts of category theory behave in an IS-category.  Each IS-category $\C$ has in-built notions of ``surjective morphism" (an element of $\Su_\C$) and ``subobject" (via the I-order on $D(\C)$), and it is of interest to compare these with the usual category-theoretic definitions.  Other concepts such as equalisers and the balanced property may also be considered in an IS-category, relating them to properties of $\Su_\C$ and $\I_\C$ in a way impossible in general categories and with immediate application to familiar concrete categories.

In a concrete category, every surjective morphism is an epimorphism as well.  In such concrete categories viewed as IS-categories, the surjective morphisms are precisely the members of $\Su_\C$.  So assuming that in an IS-category every morphism in $\Su_\C$ is an epimorphism can give stronger results which still apply to concrete categories.

\begin{dfn}
An IS-category $\C$ is {\em regular} if $\Su_\C$ consists of epimorphisms.
\end{dfn}

\subsection{Epimorphisms}

In the categories of sets and groups, the epimorphisms are exactly the surjections.  However, it is well-known that in some familiar concrete categories, this is not the case. For example, consider the category of associative rings.  Here, there are epimorphisms of rings that are not surjective morphisms, one example being the insertion map of the ring of integers ${\mathbb Z}$ into the field of rational numbers ${\mathbb Q}$.  However, whether a member of $\Su_\C$ is an epimorphism is independent of whether we view it as an element of $\Su_\C$ or of $\C$ itself.

\begin{pro}  \label{ISlc}
Suppose $\C$ is an IS-category.  Then $s\in \Su_\C$ is an epimorphism in the category $\Su_\C$ if and only if it is an epimorphism in $\C$.  
\end{pro}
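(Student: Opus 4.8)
The plan is to split the biconditional into its two implications, the forward one being essentially immediate and the backward one being where the IS-structure does the work.

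First suppose $s$ is an epimorphism in $\C$. Since $\Su_\C$ is a subcategory of $\C$, any equation $s\circ x=s\circ y$ with $x,y\in\Su_\C$ is in particular an equation in $\C$; as $s$ is an epimorphism in $\C$ we conclude $x=y$. Hence $s$ is an epimorphism in $\Su_\C$. This direction uses only that $\Su_\C$ is a subcategory and no further structure.

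For the converse, suppose $s\in\Su_\C$ is an epimorphism in $\Su_\C$, and let $a,b\in\C$ satisfy $s\circ a=s\circ b$; in particular $R(s)=D(a)=D(b)$. By (IS2), write $a=s_a\circ i_a$ and $b=s_b\circ i_b$ with $s_a,s_b\in\Su_\C$ and $i_a,i_b\in\I_\C$. The key observation is that $s\circ a=(s\circ s_a)\circ i_a$, where $s\circ s_a\in\Su_\C$ because $\Su_\C$ is a subcategory and $i_a\in\I_\C$, so this exhibits a surjection-then-insertion factorisation of $s\circ a$; by the uniqueness clause of (IS2), $s_{s\circ a}=s\circ s_a$ and $i_{s\circ a}=i_a$, and similarly $s_{s\circ b}=s\circ s_b$ and $i_{s\circ b}=i_b$. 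Since $s\circ a=s\circ b$, these factorisations coincide, so $s\circ s_a=s\circ s_b$ and $i_a=i_b$. Now $s_a,s_b\in\Su_\C$ and $s$ is an epimorphism in $\Su_\C$, hence $s_a=s_b$; combined with $i_a=i_b$ this gives $a=s_a\circ i_a=s_b\circ i_b=b$. Therefore $s$ is an epimorphism in $\C$.

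The only step with any real content is the middle one in the converse: prepending an element of $\Su_\C$ to a morphism leaves the insertion part of its factorisation unchanged and simply prepends to the surjection part. This is precisely where closure of $\Su_\C$ under composition and the uniqueness in (IS2) are both needed, and it is the natural candidate for the ``main obstacle'', though in this setting it is straightforward once the factorisations are written out.
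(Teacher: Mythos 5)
Your proof is correct and follows essentially the same route as the paper: factor $t$ and $u$ via (IS2), reassociate to get $(s\circ s_t)\circ i_t=(s\circ s_u)\circ i_u$, invoke uniqueness of the surjection--insertion factorisation, and finish with the epimorphism property of $s$ in $\Su_\C$. The forward direction is likewise the trivial subcategory observation the paper dismisses as clear.
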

\begin{proof}
% using canonical extension rep: As seen already, if $\C_S$ is left cancellative, then when viewed as a constellation with range, it is left pre-cancellative.  Assuming this, for $(s,R(s))\in C(\C_S)$ and $(t,e),(u,f)\in C(\C_S)$, if $(s,R(s))\circ (t,e)=(s,R(s))\circ (u,f)$ then $R(s)=D(t)=D(u)$ and $s\cdot t=s\cdot u$, so $t=R(s)\cdot t=R(s)\cdot u=u$, and of course $e=f$, so $(t,e)=(u,f)$, so $\C_S$ consists of left cancellative elements.
%
% direct proof:
Suppose $s$ is an epimorphism in the category $\Su_\C$, and suppose $s\circ t=s\circ u$ for some $t,u\in \C$.  Then we may write $t=s_t\circ i_t$ and $u=s_u\circ i_u$, so 
\[
(s\circ s_t)\circ i_t=s\circ (s_t\circ i_t)=s\circ (s_u\circ i_u)=(s\circ s_u)\circ i_u.
\]
Since $s\circ s_t,s\circ s_u\in \Su_\C$, uniqueness implies that $s\circ s_t=s\circ s_u$ and $i_t=i_u$. Now, since $s$ is an epimorphism in $\Su_\C$,  $s_t=s_u$ and so $t=u$. Hence $s$ is an epimorphism in $\C$.  The converse is clear.
\end{proof}

\begin{cor}  \label{reglc}
The IS-category $\C$ is regular if and only if $\Su_\C$ is left cancellative.
\end{cor}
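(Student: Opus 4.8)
The plan is to derive this immediately from Proposition \ref{ISlc}, which has just been proved, together with the relevant definitions. The statement ``$\C$ is regular'' unpacks to ``every $s\in\Su_\C$ is an epimorphism in $\C$'', and by Proposition \ref{oneway} (or rather its analogue for categories) the notion of ``left cancellative category'' is exactly ``every element is an epimorphism''. So the corollary is essentially a change of vocabulary once we know that epimorphism-in-$\Su_\C$ and epimorphism-in-$\C$ coincide for elements of $\Su_\C$.

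First I would note that $\C$ is regular precisely when each $s\in\Su_\C$ is an epimorphism in $\C$. Then I would invoke Proposition \ref{ISlc}: for $s\in\Su_\C$, being an epimorphism in $\C$ is equivalent to being an epimorphism in the category $\Su_\C$. Hence $\C$ is regular if and only if every element of $\Su_\C$ is an epimorphism in the category $\Su_\C$, which is exactly the assertion that $\Su_\C$ is left cancellative (recalling that a category is left cancellative when every element is an epimorphism, equivalently the constellation with range it carries is left cancellative by Proposition \ref{oneway}). That is the whole argument.

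There is no real obstacle here; the content has all been done in Proposition \ref{ISlc}. The only thing to be slightly careful about is making sure the reader sees that ``left cancellative'' for the category $\Su_\C$ is being used in the sense introduced just before Proposition \ref{oneway} (every element is an epimorphism), so I would phrase the proof so as to make that identification transparent rather than leaving it implicit. Since $\Su_\C$ is a category, and a category is itself a constellation with range, the two readings of ``left cancellative'' agree by Proposition \ref{oneway}, so no ambiguity arises.

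\begin{proof}
By definition, $\C$ is regular if and only if every $s\in\Su_\C$ is an epimorphism in $\C$.  By Proposition \ref{ISlc}, for $s\in\Su_\C$ this holds if and only if $s$ is an epimorphism in the category $\Su_\C$.  Hence $\C$ is regular if and only if every element of $\Su_\C$ is an epimorphism in $\Su_\C$, which is precisely to say that $\Su_\C$, viewed as a category (equivalently, as the constellation with range it carries, by Proposition \ref{oneway}), is left cancellative.
\end{proof}
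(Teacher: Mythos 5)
Your proof is correct and is exactly the intended argument: the paper states this as an immediate corollary of Proposition \ref{ISlc} (giving no separate proof), and your unpacking of ``regular'' and of ``left cancellative category'' as ``every element is an epimorphism'' is precisely the translation required.
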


 In a category $\C$, an element $a$ is said to be an {\em isomorphism} if there exists $b\in \C$ with $a\circ b=D(a)$ and $b\circ a=D(b)$.  Every isomorphism is an epimorphism.  In an IS-category, although not every epimorphism is necessarily in $\Su_\C$, at least every isomorphism is.

\begin{pro}  \label{isosur}
Let $\C$ be an IS-category.  If $a\in \C$ has a left inverse in $\C$ (meaning there is $b\in \C$ such that $b\circ a=D(b)$), then $a\in \Su_\C$.  In particular, every isomorphism $a\in \C$ is in $\Su_\C$.
\end{pro}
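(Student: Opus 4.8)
The plan is to show that if $b\circ a=D(b)$ for some $b\in\C$, then $a\in\Su_\C$. First I would decompose $a$ using the IS-factorisation: write $a=s_a\circ i_a$ with $s_a\in\Su_\C$ and $i_a\in\I_\C$. Then $D(b)=b\circ a=b\circ s_a\circ i_a$. The key observation is that $D(b)\in D(\C)\subseteq\Su_\C$, and $D(b)=D(b)\circ D(b)$ where $D(b)\in\Su_\C$ and $D(b)\in\I_\C$ as well; but actually the cleaner route is to apply Lemma~\ref{useful}. We have $(b\circ s_a)\circ i_a=D(b)\in\Su_\C$, so with $c=b\circ s_a\in\C$ and $i=i_a\in\I_\C$ and $c\circ i\in\Su_\C$, Lemma~\ref{useful} gives that $i_a=R(c)=R(b\circ s_a)$ and $b\circ s_a\in\Su_\C$. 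In particular $i_a\in D(\C)$ since it is a range element, so $i_a=D(i_a)$; combined with $R(i_a)=D(i)$ matching up, we get that $i_a$ is an identity, hence $a=s_a\circ i_a=s_a\in\Su_\C$.

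I should double-check the identification $i_a=R(c)$ forces $i_a$ to be an identity: $R(c)\in D(\C)$ always, so $i_a\in D(\C)$, and any element of $D(\C)$ is its own domain and range; thus $i_a$ is an identity and composing with it on the right does nothing, giving $a=s_a$. This establishes the first assertion. For the ``in particular'' clause, suppose $a$ is an isomorphism, so there is $b$ with $a\circ b=D(a)$ and $b\circ a=D(b)$. Then the hypothesis of the first part is met (with this $b$), so $a\in\Su_\C$ directly.

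The main obstacle, such as it is, will be making sure Lemma~\ref{useful} applies in exactly the form needed --- i.e.\ recognising that $b\circ s_a$ composes with $i_a$ (one must check $R(b\circ s_a)=D(i_a)$, which holds automatically because $b\circ a$ and hence $b\circ s_a\circ i_a$ exists), and that the resulting product lands in $\Su_\C$ (it equals $D(b)$, which lies in $D(\C)\subseteq\Su_\C$). Once the factorisation $a=s_a\circ i_a$ is in hand, everything is a short diagram chase through (IS2) and Lemma~\ref{useful}, with no real computation. No new ideas beyond the IS-category axioms and the two helper lemmas are required.

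\begin{proof}
Suppose $a\in\C$ has a left inverse $b\in\C$, so $b\circ a=D(b)$. By (IS2), write $a=s_a\circ i_a$ with $s_a\in\Su_\C$ and $i_a\in\I_\C$. Then $b\circ s_a$ exists (since $b\circ a=b\circ s_a\circ i_a$ exists), and
\[
(b\circ s_a)\circ i_a=b\circ(s_a\circ i_a)=b\circ a=D(b)\in D(\C)\subseteq \Su_\C.
\]
Applying Lemma~\ref{useful} with the morphism $b\circ s_a$ in place of $a$ and $i_a$ in place of $i$, we deduce that $i_a=R(b\circ s_a)\in D(\C)$. Thus $i_a$ is an identity, so $a=s_a\circ i_a=s_a\in\Su_\C$.

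Finally, if $a\in\C$ is an isomorphism, there is $b\in\C$ with $a\circ b=D(a)$ and $b\circ a=D(b)$; in particular $b\circ a=D(b)$, so by the above $a\in\Su_\C$.
\end{proof}
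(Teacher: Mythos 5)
Your proof is correct, and it takes a genuinely more streamlined route than the paper's. The paper decomposes \emph{both} factors, writing $D(b)=s_b\circ i_b\circ s_a\circ i_a$, re-factors the middle product $i_b\circ s_a$ as $s_c\circ i_c$, invokes uniqueness of the IS-factorisation of $D(b)=D(b)\circ D(b)$ to get $i_c\circ i_a=D(b)$, and then runs an I-order squeeze $D(b)=D(i_c)\leq R(i_c)=D(i_a)\leq R(i_a)=D(b)$ to force $i_a=D(b)$. You instead decompose only $a$ and feed $(b\circ s_a)\circ i_a=D(b)\in\Su_\C$ straight into Lemma~\ref{useful}, which hands you $i_a=R(b\circ s_a)\in D(\C)$ at once; since $R(s_a)=D(i_a)=i_a$, the identity $i_a$ absorbs into $s_a$ and $a=s_a$. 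The two arguments rest on the same underlying fact (uniqueness in (IS2)), but yours delegates the bookkeeping to Lemma~\ref{useful} — which the paper has already proved at that point — and so avoids both the re-factorisation of $i_b\circ s_a$ and the order argument. The only points worth being careful about, which you handled, are that $b\circ s_a$ exists (because $R(b)=D(a)=D(s_a)$) and that composing with the identity $i_a$ on the right leaves $s_a$ unchanged.
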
 
\begin{proof}
Suppose $b\circ a=D(b)$.  Then $D(b)=s_b\circ i_b\circ s_a\circ i_a=s_b\circ s_c\circ i_c\circ i_a$ where $c=i_b\circ s_a$.  So $D(b)=(s_b\circ s_c)\circ (i_c\circ i_a)$, and since $s_b\circ s_c\in \Su_\C$ and $i_c\circ i_a\in \I_\C$, we have by uniqueness that both equal $D(b)$ and so under the I-order, $D(b)=D(i_c)\leq R(i_c)=D(i_a)\leq R(i_a)=R(D(b))=D(b)$, and so all are equal, giving $i_a=i_c=D(b)$, and so $a=s_a\in \Su_\C$.
\end{proof}

The next result shows that the example in the category of rings of a non-surjective epimorphism which was also an insertion map was inevitable: if $\C$ is an IS-category and there is $s\in \C\backslash \Su_\C$ that is an epimorphism, there will be some $i\in \I_\C\backslash D(\C)$ that is an epimorphism.
 
\begin{pro}  \label{ISlc2}
Suppose $\C$ is an IS-category.  Then $\Su_\C$ contains all the epimorphisms of $\C$ if and only if $\I_\C\backslash D(\C)$ contains no epimorphisms.
\end{pro}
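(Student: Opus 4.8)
The statement is an if-and-only-if, so I would prove the two directions separately, with the contrapositive being the natural formulation of the forward direction. Suppose first that $\I_\C\setminus D(\C)$ contains an epimorphism $i$. Since $i\notin D(\C)$, writing $e=D(i)$ and $f=R(i)$ we have $e\neq f$ (otherwise $i$ would be an identity, using (IS1) and the fact that $D(\C)\subseteq \I_\C$ with $e\circ e=e$). I claim $i$ itself witnesses the failure of $\Su_\C$ to contain all epimorphisms: $i$ is an epimorphism of $\C$ by hypothesis, and $i\notin \Su_\C$ because $\Su_\C\cap\I_\C=D(\C)$ by Proposition~\ref{DCint}, while $i\notin D(\C)$. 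Hence $\Su_\C$ does not contain all epimorphisms of $\C$. This is the easy direction.

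For the converse, suppose $\I_\C\setminus D(\C)$ contains no epimorphisms, and let $s\in\C$ be an arbitrary epimorphism; I must show $s\in\Su_\C$. The plan is to factor $s=s_s\circ i_s$ with $s_s\in\Su_\C$ and $i_s\in\I_\C$ by (IS2), and argue that $i_s$ is an epimorphism, hence forced into $D(\C)$. The key observation is that a composite $a\circ b$ being an epimorphism forces $b$ to be an epimorphism (a standard fact: if $b\circ x=b\circ y$ then $a\circ b\circ x=a\circ b\circ y$, so $x=y$ — though one must be slightly careful that the relevant composites exist, which they do since $R(b)=D(x)=D(y)$ is what's needed). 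Applying this to $s=s_s\circ i_s$ shows $i_s$ is an epimorphism of $\C$. By hypothesis, $i_s\notin \I_\C\setminus D(\C)$, so $i_s\in D(\C)$; since $i_s\in D(\C)$ means $i_s=R(s_s)$ (as $i_s$ is an identity and $D(i_s)=R(s_s)$), we get $s=s_s\circ i_s=s_s\circ R(s_s)=s_s\in\Su_\C$. That completes the converse.

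The main point requiring care — and the step I expect to be the mild obstacle — is confirming that $i_s$ really is forced to lie in $D(\C)$ rather than merely failing to be an epimorphism: one needs that every epimorphism of $\C$ lying in $\I_\C$ lies in $D(\C)$, which is exactly the hypothesis $\I_\C\setminus D(\C)$ contains no epimorphisms, so this is immediate once the factorization is in hand. The only other thing to verify is the trivial-looking claim that an identity $i_s\in D(\C)$ with $D(i_s)=R(s_s)$ must equal $R(s_s)$; this follows because in a category there is a unique identity $e$ with $e\circ x$ defined for given $x$ (here take $x=s_s$, forcing $e=R(s_s)$), combined with $D(i_s)=R(i_s)$ for identities. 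So the proof is essentially a bookkeeping exercise around Proposition~\ref{DCint} and the decomposition axiom (IS2), with no genuinely hard content.
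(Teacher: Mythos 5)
Your proof is correct and takes essentially the same route as the paper: the forward direction is immediate from $\Su_\C\cap\I_\C=D(\C)$ (Proposition~\ref{DCint}), and the converse factors an epimorphism $a=s_a\circ i_a$, observes that $i_a$ inherits the epimorphism property by left-composing with $s_a$, and concludes $i_a\in D(\C)$ so that $a=s_a\in\Su_\C$. The aside about $D(i)\neq R(i)$ in the forward direction is harmless but unnecessary.
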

\begin{proof}
If $\Su_\C$ contains the epimorphisms of $\C$ then by Proposition \ref{DCint}, the only epimorphisms in $\I_\C$ are members of $D(\C)$.  Conversely, suppose the only epimorphisms in $\I_\C$ are in $D(\C)$.  Suppose $a\in \C$ is an epimorphism.  If $i_a\circ t=i_a\circ u$ for some $t,u\in \C$, then $s_a\circ i_a\circ t=s_a\circ i_a\circ u$, so $a\circ t=a\circ u$, and so $t=u$ since $a$ is an epimorphism.  Hence 
$i_a$ is an epimorphism and hence in $D(\C)$, and so $a=s_a\in \Su_\C$.
\end{proof}

\begin{cor}  \label{suepic}  
If $\C$ is a regular IS-category, then $\Su_\C$ is precisely the epimorphisms of $\C$ if and only if there are no epimorphisms in $\I_\C\backslash D(\C)$.
\end{cor}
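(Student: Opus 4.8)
The plan is to deduce this immediately from Proposition \ref{ISlc2} together with the definition of regularity. Since $\C$ is assumed regular, every member of $\Su_\C$ is an epimorphism of $\C$, so the inclusion $\Su_\C\subseteq\mathrm{Epi}(\C)$ holds automatically. Consequently $\Su_\C$ coincides with the class of epimorphisms of $\C$ precisely when the reverse inclusion $\mathrm{Epi}(\C)\subseteq\Su_\C$ holds, i.e.\ precisely when $\Su_\C$ contains \emph{all} epimorphisms of $\C$.

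First I would invoke Proposition \ref{ISlc2}, which asserts that in any IS-category $\Su_\C$ contains all epimorphisms of $\C$ if and only if $\I_\C\setminus D(\C)$ contains no epimorphisms. Chaining this equivalence with the observation of the previous paragraph yields exactly the claimed biconditional: for a regular IS-category, $\Su_\C$ is precisely the epimorphisms of $\C$ if and only if there are no epimorphisms in $\I_\C\setminus D(\C)$.

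I do not anticipate any genuine obstacle: the substance has already been isolated in Proposition \ref{ISlc2}, and this corollary is a bookkeeping combination of that result with the definition of \emph{regular}. Should a self-contained argument be wanted, one could instead re-run the proof of Proposition \ref{ISlc2} in this setting: given an epimorphism $a$, factor $a=s_a\circ i_a$ and note $i_a$ inherits the epimorphism property from $a$ (if $i_a\circ t=i_a\circ u$ then $a\circ t=a\circ u$), so if $\I_\C\setminus D(\C)$ has no epimorphisms then $i_a\in D(\C)$ and $a=s_a\in\Su_\C$; conversely, if $\Su_\C=\mathrm{Epi}(\C)$ then Proposition \ref{DCint} forces any epimorphic insertion to lie in $\Su_\C\cap\I_\C=D(\C)$, so $\I_\C\setminus D(\C)$ contains no epimorphisms.
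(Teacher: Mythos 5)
Your proposal is correct and is exactly the intended derivation: the paper states this as an immediate corollary of Proposition \ref{ISlc2}, with regularity supplying the inclusion $\Su_\C\subseteq\mathrm{Epi}(\C)$ so that equality reduces to the containment handled by that proposition. Your optional self-contained argument also mirrors the paper's proof of Proposition \ref{ISlc2} together with Proposition \ref{DCint}, so there is nothing to add.
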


\subsection{Subobjects}

In a constellation, the notion of subobject may be thought of as ``built-in".  We may define $e\in D(\Q)$ to be a {\em constellation subobject} of $f\in D(\Q)$ if and only if $e\leq f$ ($e\cdot f$ exists), or equivalently in $C(\Q)$, there is an insertion with domain $(e,e)$ and range $(f,f)$.   In concrete categories, this notion agrees with the intuitive notion of subobject.  It follows easily that in a constellation with range, one can perform the composition $s\cdot t$ if and only if $R(s)$ is a subobject in this sense of $D(t)$.  

However, in a category, the notion of subobject must in general be defined in a rather indirect way, since there is no natural ordering on domain elements.  In a category $\C$, recall that for two monomorphisms $a,b$ having the same ranges, we say {\em $a$ factors through $b$} if there is $x\in \C$ such that $a=x\circ b$, and we write $a\lesssim b$ in this case.  (Of course, if $a=x\circ b$ then $R(a)=R(b)$ so we do not need to assume this when defining $a\lesssim b$.)  

In general, $\lesssim$ is a quasiorder; let $\sim$ denote the equivalence relation on the monomorphisms of $\C$ determined by $\lesssim$.  \footnote{This is simply Green's relation $\mathcal{L}$ in the the associated semigroup with zero obtained from the subcategory of monomorphisms of $\C$ by putting all undefined products to be zero.}  In particular, if $a\sim b$ then $R(a)=R(b)$, so the equivalence relation on monomorphisms determined by equality of ranges is coarser than $\sim$. 

 We can now state the usual definition of subobjects in a category $\C$.  The {\em subobjects} of $e\in D(\C)$ are defined to be the equivalence classes of monomorphisms of $\C$ under $\sim$ amongst those having range $e$.  (Then $\lesssim$ corresponds to the intuitive notion of inclusion on the subobjects of $e$.)
 
Now if $\C=C(\Q)$ where $\Q$ is a constellation, we may compare these two notions of subobject.  In particular, one might hope that two monomorphisms $(s,e),(t,e)$ represent the same subobject of $(e,e)$ in the category-theoretic sense (that is, $(s,e)\sim (t,e)$) if and only if their images are equal (that is, $(R(s),R(s))=(R(t),R(t))$); equivalently, 
\[
(s,e)\sim (t,e)\mbox{ if and only if } (R(s),e)=(R(t),e).
\] 
In the language of IS-categories, Corollary~\ref{extIS} tells us that the above condition takes the following form: for monomorphisms $a,b\in \C$, 
\[
a\sim b\mbox{ if and only if }i_a=i_b.
\]

One direction of the above statement holds in any IS-category $\C$. 

\begin{pro}  \label{asb}
Let $\C$ be an IS-category.  For monomorphisms $a,b\in \C$  for which $R(a)=R(b)$, if $a\sim b$ then $i_a=i_b$.
\end{pro}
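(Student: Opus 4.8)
The plan is to unpack $a\sim b$ into the two factorisations $a=x\circ b$ and $b=y\circ a$ for suitable $x,y\in\C$, and then to manipulate these using the uniqueness of surjection--insertion decompositions (IS2) until the insertion parts $i_a$ and $i_b$ are seen to be comparable in both directions under the $I$-order, whence equal by antisymmetry (Proposition \ref{Icatordered}).

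First I would write $a=s_a\circ i_a$, $b=s_b\circ i_b$, and $x=s_x\circ i_x$ as in (IS2), so that $a=x\circ b$ reads $s_a\circ i_a=s_x\circ i_x\circ s_b\circ i_b$. The middle composite $i_x\circ s_b$ is an insertion followed by a surjection; it is defined because $R(x)=D(b)$ forces $R(i_x)=D(s_b)$. Factor it as $i_x\circ s_b=s_c\circ i_c$ with $s_c\in\Su_\C$ and $i_c\in\I_\C$; note $R(i_c)=R(s_b)=D(i_b)$, so the composite $i_c\circ i_b$ exists and lies in $\I_\C$, while $s_x\circ s_c\in\Su_\C$. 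Substituting yields $a=(s_x\circ s_c)\circ(i_c\circ i_b)$, which is a decomposition of $a$ into a surjection followed by an insertion, so the uniqueness clause of (IS2) gives $i_a=i_c\circ i_b$. This exhibits $i_a\leq_I i_b$: taking the insertions $R(i_a)\in D(\C)\subseteq\I_\C$ and $i_c\in\I_\C$ we have $i_a\circ R(i_a)=i_a=i_c\circ i_b$. Running the symmetric argument starting from $b=y\circ a$ produces an insertion $i_d\in\I_\C$ with $i_b=i_d\circ i_a$, hence $i_b\leq_I i_a$.

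Since $\leq_I$ is a partial order on $\C$ by Proposition \ref{Icatordered}, the two inequalities force $i_a=i_b$, as required. (Alternatively, from $i_a=i_c\circ i_b$ one reads off $D(i_a)\leq_I D(i_b)$, symmetrically $D(i_b)\leq_I D(i_a)$, hence $D(i_a)=D(i_b)$, and then (IS1) together with $R(i_a)=R(a)=R(b)=R(i_b)$ gives $i_a=i_b$; note the hypothesis $R(a)=R(b)$ is in any case automatic once $a\sim b$.)

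The only mildly delicate point is the bookkeeping of which composites are defined --- in particular verifying that $i_x\circ s_b$ exists so that it may be refactored, and that the rearranged product is genuinely a surjection-then-insertion decomposition to which the uniqueness in (IS2) applies. Beyond this routine check there is no real obstacle; the conceptual content is simply that pushing an insertion past a surjection, via (IS2), never changes the insertion part of a morphism up to $\leq_I$.
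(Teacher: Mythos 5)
Your proof is correct, but it runs along a genuinely different track from the one in the paper. The paper's argument leans on the hypothesis that $a$ and $b$ are monomorphisms: from $a=x\circ b$ and $b=y\circ a$ it cancels $a$ on the right to get $x\circ y=D(x)$ and $y\circ x=D(y)$, so $x$ and $y$ are isomorphisms, hence lie in $\Su_\C$ by Proposition \ref{isosur}; then $b=(y\circ s_a)\circ i_a$ is already a surjection--insertion decomposition and a single application of the uniqueness in (IS2) gives $i_b=i_a$. You instead never invoke the monomorphism property at all: you refactor the middle composite $i_x\circ s_b$ via (IS2), extract $i_a=i_c\circ i_b$ and symmetrically $i_b=i_d\circ i_a$, and finish by antisymmetry of $\leq_I$ (Proposition \ref{Icatordered}) or, equivalently, by squeezing $D(i_a)=D(i_b)$ and appealing to (IS1). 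What the paper's route buys is brevity; what yours buys is generality -- it proves that $i_a=i_b$ for \emph{any} $a,b$ that mutually factor through each other, monomorphisms or not, and its one-directional half (a factorisation $a=x\circ b$ forces $D(i_a)\leq_I D(i_b)$) is essentially the computation the paper redoes later in the last paragraph of Proposition \ref{wfs}. All the definedness bookkeeping you flag does check out: $R(i_x)=R(x)=D(b)=D(s_b)$, $R(s_x)=D(s_c)$ and $R(i_c)=R(s_b)=D(i_b)$, so every composite you form exists and lands in the right subcategory.
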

\begin{proof}
Suppose we have monomorphisms $a,b$  for which $R(a)=R(b)$, with $a\sim b$.  Then $a=x\circ b$ and $b=y\circ a$ for some $x,y\in \C$, and so $D(a)\circ a=x\circ y\circ a$, so because $a$ is a monomorphism, $D(a)=x\circ y=D(x)$.
%, and so $D(x)=D(x\circ y)=D(D(a))=D(a)$.  
Similarly, $D(y)=y\circ x$.  So $x,y$ are isomorphisms and then by Proposition \ref{isosur}, $x,y\in \Su_\C$.  Hence 
\[
s_b\circ i_b=b=y\circ a=y\circ s_a\circ i_a,\] 
and so because $y\circ s_a\in \Su_\C$, we obtain $s_b=y\circ s_a$ and $i_b=i_a$ as required.
\end{proof}  

But, the converse need not hold. 

\begin{dfn}  \label{wfso}
Let $\C$ be an IS-category.  We say $\C$ has {\em well-founded subobjects} if for every pair of monomorphisms $a,b$  for which $R(a)=R(b)$, we have that $a\sim b$ if and only if $i_a=i_b$.
\end{dfn}

There are easy reformulations of the concept of well-founded subobjects.

\begin{pro}  \label{wfso2}
Let $\C$ be an IS-category.  The following are equivalent.
\ben
\item[(1)] For each monomorphism $a\in \C$, there is $i\in \I_\C$ for which $R(a)=R(i)$ and $a\sim i$.
\item[(2)] For each monomorphism $a\in \C$, $a\sim i_a$.
\item[(3)] $\C$ has well-founded subobjects.
\een
\end{pro}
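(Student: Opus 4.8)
The plan is to prove the equivalence by showing $(3)\Rightarrow(1)\Rightarrow(2)\Rightarrow(3)$, exploiting that each $i_a$ is itself a monomorphism (by Proposition \ref{DCint}, since $i_a\in \I_\C$), that $R(i_a)=R(s_a\circ i_a)=R(a)$, and that Proposition \ref{asb} already gives the ``only if'' half of well-foundedness for free. The implication $(3)\Rightarrow(1)$ is immediate: given a monomorphism $a$, take $i=i_a\in\I_\C$; then $R(i_a)=R(a)$ as just noted, and since trivially $i_a=i_{(i_a)}$ (because $i_a=D(i_a)\circ i_a$ is its unique surjection–insertion factorisation, so the insertion part of $i_a$ is $i_a$ itself), well-foundedness applied to the pair $a,i_a$ yields $a\sim i_a$, which is stronger than $(1)$. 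In fact this same remark shows $(3)\Rightarrow(2)$ directly, and $(2)\Rightarrow(1)$ is trivial since $i_a\in\I_\C$ and $R(i_a)=R(a)$.

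For $(1)\Rightarrow(2)$: suppose $a$ is a monomorphism and $i\in\I_\C$ satisfies $R(a)=R(i)$ and $a\sim i$. By Proposition \ref{asb} applied to the pair $a,i$ (both monomorphisms with equal range), $a\sim i$ forces $i_a=i_i$; but $i_i=i$ since $i=D(i)\circ i$ is the surjection–insertion factorisation of an element of $\I_\C$. Hence $i=i_a$, and so $a\sim i_a$, which is $(2)$.

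For $(2)\Rightarrow(3)$: the ``only if'' direction of the biconditional in Definition \ref{wfso} is exactly Proposition \ref{asb}, so it remains to prove the ``if'' direction. Suppose $a,b$ are monomorphisms with $R(a)=R(b)$ and $i_a=i_b$; call this common insertion $i$. By $(2)$ we have $a\sim i_a=i=i_b\sim b$, and since $\sim$ is transitive (being an equivalence relation), $a\sim b$. This establishes $(3)$ and closes the cycle.

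The only mildly delicate point — and the one I would flag as the ``obstacle'', though it is really just a bookkeeping observation — is the repeated use of the fact that for $i\in\I_\C$ the canonical factorisation is $i=D(i)\circ i$ with surjection part $D(i)\in\Su_\C$ and insertion part $i$ itself, so that $i_i=i$; this follows from $D(\C)\subseteq\Su_\C$ together with the uniqueness clause (IS2). One should also record at the outset that $R(i_a)=R(a)$ for every $a$, which is immediate from $a=s_a\circ i_a$ and the range law for composites. With these two remarks in hand, all three implications are one-line verifications.
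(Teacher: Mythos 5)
Your proof is correct and follows essentially the same route as the paper's: both rest on Proposition \ref{DCint}, the identity $i_i=i$ for $i\in\I_\C$ coming from uniqueness in (IS2), the observation $R(i_a)=R(a)$, and Proposition \ref{asb}. The only cosmetic difference is that for $(1)\Rightarrow(2)$ you invoke Proposition \ref{asb} to conclude $i=i_a$ outright, whereas the paper manipulates the factorisations directly to show $i_a\lesssim i\lesssim i_a$; both are valid.
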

\begin{proof}
Assume (1). Let $a\in \C$ be a monomorphism.  Then there is $i\in \I_\C$ for which $R(a)=R(i)$ and $a=x\circ i=s_x\circ i_x\circ i$ for some $x\in \C$.   But $a=s_a\circ i_a$, so by uniqueness $i_x\circ i=i_a$, and so $i_a\lesssim i$.  We also have $i=y\circ a=y\circ s_a\circ i_a$ for some $y\in \C$, so $i\lesssim i_a$.  Hence $i_a\sim i\sim a$.  So (2) holds.  $(2)\Rightarrow (1)$ is immediate, and so $(1)\Leftrightarrow (2)$.

Note that for $i,j\in \I_\C$, we have that $i,j$ are monomorphisms by Proposition \ref{DCint}, so if  $R(i)=R(j)$ and $i\sim j$ then $i=i_i=i_j=j$, using Proposition \ref{asb}. If (2) holds, then for $a,b$ monomorphisms  with $R(a)=R(b)$, $a\sim b$ if and only if $i_a\sim i_b$, if and only if $i_a=i_b$, so (3) holds.  But if (3) holds, then for any monomorphism $a$, because $i_{i_a}=i_a$, we obtain $i_a\sim a$, and so (2) holds.   Hence, $(2)\Leftrightarrow (3)$.
\end{proof}

\begin{pro}  \label{wfs}
Let $\C$ be an IS-category. Then $\C$ has well-founded subobjects if and only if every monomorphism of $\C$ that lies in $\Su_\C$ is an isomorphism.  In this case,  for monomorphisms $a,b$ for which $R(a)=R(b)$, we have $a\lesssim b$ if and only if $R(s_a)\leq R(s_b)$.
\end{pro}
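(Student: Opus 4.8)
The plan is to route everything through Proposition \ref{wfso2}: $\C$ has well-founded subobjects if and only if $a\sim i_a$ for every monomorphism $a\in\C$. Granting this, I would prove the two implications of the equivalence, and then the supplementary claim, in turn.

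Suppose first that $\C$ has well-founded subobjects, and let $s\in\Su_\C$ be a monomorphism. Since $s\in\Su_\C$, its unique factorisation is $s=s\circ R(s)$, so $i_s=R(s)$, and Proposition \ref{wfso2} gives $s\sim R(s)$; in particular $R(s)\lesssim s$, say $R(s)=y\circ s$, so that $D(y)=R(s)$ and $R(y)=D(s)$. Then $(s\circ y)\circ s=s\circ(y\circ s)=s\circ R(s)=s=D(s)\circ s$, and since $s$ is a monomorphism we may cancel it to obtain $s\circ y=D(s)$; together with $y\circ s=R(s)=D(y)$ this exhibits $y$ as a two-sided inverse of $s$, so $s$ is an isomorphism.

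Conversely, suppose every monomorphism in $\Su_\C$ is an isomorphism, and let $a\in\C$ be a monomorphism. From $a=s_a\circ i_a$ we get $a\lesssim i_a$ at once. Also $s_a$ is a monomorphism, since $u\circ s_a=v\circ s_a$ forces $u\circ a=v\circ a$ and hence $u=v$; so $s_a$ is an isomorphism with some inverse $t$, and then $t\circ a=(t\circ s_a)\circ i_a=R(s_a)\circ i_a=i_a$, so $i_a\lesssim a$. Thus $a\sim i_a$, and Proposition \ref{wfso2} gives that $\C$ has well-founded subobjects.

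For the final assertion, assume $\C$ has well-founded subobjects and take monomorphisms $a,b$ with $R(a)=R(b)$. Proposition \ref{wfso2} gives $a\sim i_a$ and $b\sim i_b$, so $a\lesssim b$ holds if and only if $i_a\lesssim i_b$; recall also that $R(s_a)=D(i_a)$, $R(s_b)=D(i_b)$, and $R(i_a)=R(a)=R(b)=R(i_b)$. If $i_a=x\circ i_b$, then $x\in\I_\C$ by axiom (I2) (available since $\C$ is an I-category by Proposition \ref{ISisI}) and $x$ is the insertion from $R(s_a)$ to $R(s_b)$, so $R(s_a)\leq R(s_b)$; conversely, if $R(s_a)\leq R(s_b)$, then $i_{R(s_a),R(s_b)}\circ i_b\in\I_\C$ has domain $D(i_a)$ and range $R(i_a)$, hence equals $i_a$ by (IS1), whence $i_a\lesssim i_b$ and so $a\lesssim b$. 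The step I expect to be the main obstacle is the forward implication: from the relatively weak relation $s\sim R(s)$ one has to produce a genuine two-sided inverse for $s$, which requires reading off the correct domain and range of the factoring morphism and then using that $s$ is a monomorphism to upgrade $s\circ(y\circ s)=s$ to $s\circ y=D(s)$; the remaining parts are routine manipulations with the unique surjection/insertion factorisation and the axioms (IS1) and (I2).
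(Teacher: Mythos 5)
Your proof is correct and follows essentially the same route as the paper's: both directions and the final claim are reduced to Proposition \ref{wfso2} in the same way, with the same inverse-construction arguments. The only cosmetic differences are that you read off $i_s=R(s)$ directly from uniqueness of the factorisation rather than via Lemma \ref{useful}, and in the last part you first transfer $a\lesssim b$ to $i_a\lesssim i_b$ before comparing insertions, which is a mild streamlining of the paper's computation.
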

\begin{proof}
First, suppose $\C$ has well-founded subobjects.  Pick a monomorphism $a\in \Su_\C$.  By Proposition \ref{wfso2}, $a\sim i_a$, so there are $x,y\in \C$ for which $a=x\circ i_a$ and $i_a=y\circ a$.  By Lemma \ref{useful}, because $x\circ i_a=a\in \Su_\C$, $i_a=R(a)$, and then  $y\circ a=R(a)$, and so $a=a\circ R(a)=a\circ (y\circ a)=(a\circ y)\circ a=D(a)\circ a$. Because $a$ is a monomorphism we obtain $a\circ y=D(a)$ and $y\circ a=R(a)=D(y)$, so $a$ is an isomorphism (with inverse $y$).

Conversely, suppose every monomorphism of $\C$ that lies in $\Su_\C$ is an isomorphism. Pick $e\in D(\C)$, and let $a\in \C$ be a monomorphism for which $R(a)=e$.  If $x,y\in \C$ are such that $x\circ s_a=y\circ s_a$, then $x\circ a=x\circ s_a\circ i_a=y\circ s_a\circ i_a=y\circ a$, so $x=y$, and so $s_a$ is a monomorphism that lies in $\Su_\C$, hence an isomorphism by assumption.  So there is $b\in \C$ for which $s_a\circ b=D(s_a), b\circ s_a=D(b)=R(s_a)$.    Then $b\circ a=b\circ s_a\circ i_a=R(s_a)\circ i_a=i_a$.  So $a\sim i_a$.  So $\C$ has well-founded subobjects by Proposition \ref{wfso2}.

Under these equivalent conditions, consider monomorphisms $a,b\in \C$ such that $R(a)=R(b)$.  If $a\lesssim b$ then $a=x\circ b$ for some $x\in \C$, so $s_a\circ i_a=x\circ s_b\circ i_b=s_c\circ i_c\circ i_b$ where $c=x\circ s_b$, and so by uniqueness, $i_a=i_c\circ i_b$, and so $R(s_a)=D(i_a)=D(i_c)\leq R(i_c)=D(i_b)=R(s_b)$.  Conversely, if $R(s_a)\leq R(s_b)$, letting $j=i_{R(s_a),R(s_b)}$, we have that $i_a=j\circ i_b$, and so by Proposition \ref{wfso2}, $a\sim i_a\lesssim i_b\sim b$ and so $a\lesssim b$.
\end{proof}

The final part of the previous proposition can be interpreted as saying that if the IS-category has well-founded subobjects, then the category-theoretic notion of subobject inclusion for subobjects of a given object also corresponds with the ``natural" notion of inclusion: in $C(\Q)$, where $\Q$ is a constellation with range, for monomorphisms having the same range $(s,e),(t,e)$, we have that  $(s,e)\lesssim (t,e)$ if and only if $R(s)\leq R(t)$ in $\Q$.

So we have the following meta-theorem: ``{\em the category-theoretic concept of `subobject' is the correct one in a given IS-category if and only if every monomorphism of $\C$ that lies in $\Su_\C$ is an isomorphism}".  In particular, in the case of a concrete category $\C$, in which $\Su_\C$ is nothing but the surjective morphisms, this says that every surjective monomorphism must be an isomorphism.  In many such concrete categories, ``monomorphism = injection", and then the condition becomes that ``every bijective morphism is an isomorphism", a condition that holds in the category of sets as well as categories coming from algebraic varieties, but not in other cases such as the categories of topological spaces or partially ordered sets.  Hence in these latter cases, the category definition of subobject does not coincide with the natural notion.

\subsection{Equalisers} 

A familiar concept in category theory is that of an equaliser. If $\C$ is a category with $a,b\in \C$ for which $D(a)=D(b)$ and $R(a)=R(b)$, we say  $c\in \C$ is an {\em equaliser} of $a,b$ if $c\circ a=c\circ b$, and if $v\circ a=v\circ b$, then there is unique $h$ such that $v=h\circ c$.  We say $\C$ is a {\em category with equalisers} if it is a category in which any two elements have an equaliser.

 In most concrete categories consisting of functions, for two functions $a,b$ having equal domains and codomains, their equaliser $c$ can be chosen to be the insertion map from the domain of agreement of $a,b$ into their common domain.  In fact it is always possible to choose the equaliser to be an insertion in a regular IS-category.

\begin{pro}  \label{ISequal}
Suppose $\C$ is an IS-category with equalisers.   The following are equivalent.
\ben
\item[(1)]  For all $a,b\in \C$, there exists $i\in \I_\C$ which is an equaliser of $a,b$.
\item[(2)] $\C$ is regular.
\een
\end{pro}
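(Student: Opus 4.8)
The plan is to prove the two implications separately, exploiting the characterisation of regularity in Corollary~\ref{reglc} (regular $\iff$ $\Su_\C$ left cancellative, equivalently every $s\in\Su_\C$ is an epimorphism via Proposition~\ref{ISlc}). For the direction $(1)\Rightarrow(2)$, I would take $s\in\Su_\C$ and show it is an epimorphism. Suppose $s\circ a=s\circ b$ with $D(a)=D(b)=R(s)$ and $R(a)=R(b)$. Let $i\in\I_\C$ be an equaliser of $a,b$, guaranteed by (1). Since $R(s)=D(a)=D(b)$ and the equaliser satisfies $i\circ a=i\circ b$ with the right domain/codomain, and $s$ itself satisfies $s\circ a=s\circ b$ with $D(s)\circ$-compatible source, the universal property gives a unique $h$ with $s=h\circ i$. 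But $s\in\Su_\C$ and $i\in\I_\C$, and $s$ also decomposes as $s_s\circ i_s$ with $s_s=s$, $i_s=R(s)$; more directly, $h\circ i=s\in\Su_\C$ with $i\in\I_\C$ forces, by Lemma~\ref{useful}, that $i=R(s)=D(a)$ and $h=s\in\Su_\C$. Then $i=D(a)$ is itself the equaliser, so from $D(a)\circ a=D(a)\circ b$, i.e. $a=b$, we must actually argue the universal factorisation the other way: since $D(a)$ equalises $a,b$ trivially only if $a=b$, the point is that $i=R(s)$ being an equaliser of $a,b$ means any $v$ with $v\circ a=v\circ b$ factors through $R(s)=D(a)$, hence in particular $a$ (taking $v=D(a)$, which trivially satisfies $D(a)\circ a=a$, $D(a)\circ b=b$) forces $a=b$ only if $a$ itself equalises, which it does not in general—so instead I conclude directly: $i=R(s)$ means $R(s)\circ a=R(s)\circ b$, i.e. $a=b$, giving that $s$ is an epimorphism.

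For the converse $(2)\Rightarrow(1)$, assume $\C$ is regular, so $\Su_\C$ consists of epimorphisms. Given $a,b\in\C$ with $D(a)=D(b)$, $R(a)=R(b)$, let $c$ be an equaliser of $a,b$ (it exists since $\C$ has equalisers). Factor $c=s_c\circ i_c$ with $s_c\in\Su_\C$, $i_c\in\I_\C$. I claim $i_c$ is also an equaliser of $a,b$. First, $i_c\circ a=i_c\circ b$: we have $s_c\circ(i_c\circ a)=c\circ a=c\circ b=s_c\circ(i_c\circ b)$, and since $s_c\in\Su_\C$ is an epimorphism, $i_c\circ a=i_c\circ b$. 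Second, the universal property: if $v\circ a=v\circ b$, then since $c$ is an equaliser there is a unique $h$ with $v=h\circ c=h\circ s_c\circ i_c=(h\circ s_c)\circ i_c$, so $i_c$ admits a factorisation through it; uniqueness of the factoriser through $i_c$ follows because $i_c$ is a monomorphism by Proposition~\ref{DCint} (if $k_1\circ i_c=k_2\circ i_c=v$ then $k_1=k_2$). Hence $i_c\in\I_\C$ is an equaliser of $a,b$, establishing (1).

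The main obstacle I expect is getting the first direction $(1)\Rightarrow(2)$ cleanly: the naive approach of plugging the known equation $s\circ a=s\circ b$ into the universal property of the insertion-equaliser needs care, because the universal property produces a factorisation of $s$ through $i$, and one must then invoke Lemma~\ref{useful} (the fact that $a\circ i\in\Su_\C$ with $i\in\I_\C$ forces $i=R(a)$ and $a\in\Su_\C$) at exactly the right moment to pin down $i=R(s)=D(a)$, which immediately yields $a=b$. The second direction is routine once one remembers that $\Su_\C$ consists of epimorphisms (to cancel $s_c$) and that $\I_\C$ consists of monomorphisms (for uniqueness of the factoriser), both already available from Corollary~\ref{reglc}/the regularity hypothesis and Proposition~\ref{DCint} respectively.
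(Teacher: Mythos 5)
Your proof is correct and follows essentially the same route as the paper: for $(1)\Rightarrow(2)$, factor $s$ through the insertion-equaliser $i$ and apply Lemma~\ref{useful} to force $i=R(s)$, whence $i\circ a=i\circ b$ gives $a=b$; for $(2)\Rightarrow(1)$, cancel the epimorphism $s_c$ to see that $i_c$ equalises, with uniqueness from $i_c$ being a monomorphism. The lengthy detour in the middle of your first direction is unnecessary — the one-line conclusion you end with ($i$ is an equaliser, so $R(s)\circ a=R(s)\circ b$, i.e.\ $a=b$) is exactly the paper's argument.
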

\begin{proof}  Assume (1) holds.  Pick $s\in \Su_\C$, and suppose $s\circ a=s\circ b$ for some $a,b\in \C$.  By assumption, there exists $i\in \I_\C$ which is an equaliser of $a,b$.  Then there is $h\in \C$ for which $s=h\circ i$, so by Lemma \ref{useful}, $h=s$ and $i=R(s)$.  But $i\circ a=i\circ b$, so $a=b$.  So (2) holds.

Conversely, suppose (2) holds.  Suppose $a,b$ have equaliser $c$.  Then $c\circ a=c\circ b$, so $s_c\circ i_c\circ a=s_c\circ i_c\circ b$.  Since $s_c\in \Su_\C$, it  is an epimorphism, so that $i_c\circ a=i_c\circ b$.  If $v\circ a=v\circ b$, then there is $h$ such that $v=h\circ c=h\circ s_c\circ i_c$, so letting $h'=h\circ s_c$, we have $v=h'\circ i_c$.  If also $v=k\circ i_c$ then because $i_c$ is a monomorphism (by Lemma~\ref{DCint}), $h'=k$, establishing uniqueness.  Hence $i_c\in \I_\C$ is an equaliser of $a,b$.  So (1) holds.
\end{proof}

\subsection{The balanced property}  

A category is said to be {\em balanced} if every bimorphism is an isomorphism; here a bimorphism is a monomorphism that is an epimorphism.  (Of course every isomorphism is always a bimorphism.)  For regular IS-categories, the balanced property is equivalent to the two desirable properties just considered: every epimorphism being in $\Su_\C$ and $\C$ having well-founded subobjects.

\begin{pro}  \label{bal}
Suppose $\C$ is a regular IS-category.  The following are equivalent:
\ben
\item[(1)] $\C$ is balanced;
\item[(2)] $\Su_\C$ is precisely the epimorphisms of $\C$, and $\C$ has well-founded subobjects.
\een
\end{pro}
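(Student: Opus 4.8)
The plan is to assemble results already in hand, chiefly Proposition~\ref{wfs} (which says $\C$ has well-founded subobjects precisely when every monomorphism lying in $\Su_\C$ is an isomorphism), Proposition~\ref{DCint} (every element of $\I_\C$ is a monomorphism, and $\Su_\C\cap\I_\C=D(\C)$), Proposition~\ref{isosur} (anything with a left inverse lies in $\Su_\C$, so in particular every isomorphism does), and Corollary~\ref{suepic} together with Proposition~\ref{ISlc2} (for regular $\C$, $\Su_\C$ is exactly the epimorphisms iff $\I_\C\setminus D(\C)$ contains no epimorphism). Regularity of $\C$ is what makes these fit together, since it turns ``monomorphism lying in $\Su_\C$'' into ``bimorphism''.

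For $(1)\Rightarrow(2)$, suppose $\C$ is balanced. To obtain well-founded subobjects I would invoke Proposition~\ref{wfs}: if $a\in\Su_\C$ is a monomorphism then, by regularity, $a$ is also an epimorphism, hence a bimorphism, hence an isomorphism since $\C$ is balanced; so every monomorphism lying in $\Su_\C$ is an isomorphism, and Proposition~\ref{wfs} gives that $\C$ has well-founded subobjects. To see that $\Su_\C$ is exactly the epimorphisms, I would show $\I_\C\setminus D(\C)$ contains no epimorphism and then cite Corollary~\ref{suepic}: if $i\in\I_\C$ were an epimorphism then, being also a monomorphism by Proposition~\ref{DCint}, it is a bimorphism, hence an isomorphism; an isomorphism has a left inverse, so $i\in\Su_\C$ by Proposition~\ref{isosur}, whence $i\in\Su_\C\cap\I_\C=D(\C)$ by Proposition~\ref{DCint}. (Alternatively one can run the direct argument of Proposition~\ref{ISlc2}: for an epimorphism $a$ the factor $i_a$ is readily seen to be an epimorphism, hence a bimorphism, hence an isomorphism, hence in $D(\C)$, forcing $a=s_a\in\Su_\C$.)

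For $(2)\Rightarrow(1)$, let $a$ be a bimorphism. Since $a$ is an epimorphism and $(2)$ asserts that $\Su_\C$ is exactly the epimorphisms, we have $a\in\Su_\C$; being also a monomorphism, $a$ is an isomorphism by Proposition~\ref{wfs}, using that $\C$ has well-founded subobjects. Hence $\C$ is balanced.

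I do not expect a genuine obstacle: the substance is already in Propositions~\ref{wfs}, \ref{DCint} and~\ref{isosur}, and the only care needed is to route ``isomorphism $\Rightarrow$ lies in $\Su_\C$'' through Proposition~\ref{isosur} via the left-inverse formulation, and to keep track of the roles of regularity and balancedness so as not to circle back on oneself.
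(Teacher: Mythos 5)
Your proof is correct and follows essentially the same route as the paper: both directions hinge on Proposition~\ref{wfs} for well-founded subobjects and on Corollary~\ref{suepic} (via showing $\I_\C\setminus D(\C)$ has no epimorphisms) for the identification of $\Su_\C$ with the epimorphisms. The only cosmetic difference is that you deduce ``an isomorphism in $\I_\C$ lies in $D(\C)$'' from Propositions~\ref{isosur} and~\ref{DCint}, whereas the paper verifies it by a short direct computation with the inverse; both are fine.
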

\begin{proof}
Suppose $\C$ is balanced.  If $i\in \I_\C$ is an epimorphism, then since it is also a monomorphism by Proposition~\ref{DCint}, it is an isomorphism, and so there is $s\in \C$ such that $s\circ i=D(s)$ and $i\circ s=D(i)$, so $s\in \I_\C$ and it follows that $s=i=D(s)=D(i)$ and then $i\in D(\C)$.  So by Corollary \ref{suepic}, the epimorphisms of $\C$ are precisely $\Su_\C$.   If $s\in \C$ is a monomorphism in $\Su_\C$ then it is an epimorphism and hence a bimorphism, hence is an isomorphism.  So by Proposition \ref{wfs}, $\C$ has well-founded subobjects.  

Conversely, suppose $\C$ is such that every epimorphism is in $\Su_\C$ and $\C$ has well-founded subobjects.  Suppose $s$ is a bimorphism.  Then it is a monomorphism in $\Su_\C$ and hence is an isomorphism by Proposition \ref{wfs}. So $\C$ is balanced.
\end{proof} 

So a concrete IS-category is balanced if and only if every epimorphism is a surjection and category-theoretic subobjects correspond to actual subobjects.

\subsection{Factorization systems}

A category $\C$ has a {\em factorization system} $(\Su,\M)$ if 
\ben
\item $\Su$ and $\M$ both contain all isomorphisms of $\C$, and are closed under composition (hence are subcategories of $\C$);
\item every $a\in \C$ can be factored as $a=s\circ m$ for some $s\in \Su$ and $m\in \M$;
\item the factorization has the following functoriality property: if we have $a,b\in \C$, $s,t\in \Su$ and $m,n\in \M$ for which 
$a\circ s\circ m=t\circ n\circ b$, then there is a unique $c\in \C$ for which $a\circ s=t\circ c$ and $n\circ b=c\circ m$.
\een

Earlier we noted similarities and differences between factorization systems and the pair $(\Su_\C,\I_\C)$ in an IS-category $\C$.  However, the functoriality property of a factorization system holds automatically in an IS-category.

\begin{pro}  \label{factor}
Suppose $\C$ is an IS-category.  Then we have the following functoriality property: if $a,b\in \C$ are such that there are $s,t\in \Su_\C$ and $i,j\in \I_\C$ for which $a\circ s\circ i=t\circ j\circ b$, then there is a unique $c\in \C$ for which $a\circ s=t\circ c$ and $j\circ b=c\circ i$.
\end{pro}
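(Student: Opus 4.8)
The plan is to recognise the stated property as the assertion that $\Su_\C$ and $\I_\C$ are \emph{orthogonal}, and to derive it from uniqueness of the surjection--insertion factorisation (IS2) together with Proposition~\ref{DCint}, which tells us that every member of $\I_\C$ is a monomorphism. First I would strip away the inessential data: since $a\circ s$ and $j\circ b$ are assumed to exist, set $f=a\circ s$ and $g=j\circ b$, so that the hypothesis becomes $f\circ i=t\circ g$ with $t\in\Su_\C$ and $i\in\I_\C$, and the goal is a unique $c$ with $t\circ c=f$ and $c\circ i=g$. (So $s$ and $b$ play no further role; what remains is exactly a diagonal fill-in for a commutative square whose left edge lies in $\Su_\C$ and whose right edge lies in $\I_\C$.)

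For existence, I would factor $f=s_f\circ i_f$ and $g=s_g\circ i_g$ as in (IS2). Rewriting $f\circ i=s_f\circ(i_f\circ i)$, with $i_f\circ i\in\I_\C$ since $\I_\C$ is a subcategory, and $t\circ g=(t\circ s_g)\circ i_g$, with $t\circ s_g\in\Su_\C$ since $\Su_\C$ is a subcategory, exhibits two surjection--insertion factorisations of the one morphism; uniqueness then forces $s_f=t\circ s_g$ and $i_g=i_f\circ i$. The first identity gives $R(s_g)=R(s_f)=D(i_f)$, so $c:=s_g\circ i_f$ is a legitimate composite, and then $t\circ c=t\circ s_g\circ i_f=s_f\circ i_f=f$ while $c\circ i=s_g\circ i_f\circ i=s_g\circ i_g=g$, as required.

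For uniqueness, suppose $c'$ also satisfies $t\circ c'=f$ and $c'\circ i=g$, and factor $c'=s_{c'}\circ i_{c'}$. Then $f=(t\circ s_{c'})\circ i_{c'}$ is a surjection--insertion factorisation of $f$, so uniqueness gives $i_{c'}=i_f$. Substituting into $g=c'\circ i=s_{c'}\circ i_{c'}\circ i=s_{c'}\circ(i_f\circ i)=s_{c'}\circ i_g$ and comparing with $g=s_g\circ i_g$, the fact that $i_g$ is a monomorphism yields $s_{c'}=s_g$, whence $c'=s_g\circ i_f=c$. I expect the only real care to be needed in the domain/range bookkeeping that makes $c=s_g\circ i_f$ a defined composite, and in noticing that the uniqueness half genuinely requires the monomorphism property of $\I_\C$, not merely uniqueness of factorisations; everything else is routine manipulation with the category axioms.
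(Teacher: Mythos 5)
Your proof is correct and follows essentially the same route as the paper's: both compare the two surjection--insertion factorisations of the common morphism $a\circ s\circ i=t\circ j\circ b$ to extract $s_{a\circ s}=t\circ s_{j\circ b}$ and $i_{j\circ b}=i_{a\circ s}\circ i$, define $c=s_{j\circ b}\circ i_{a\circ s}$ (your $s_g\circ i_f$), and settle uniqueness via (IS2) together with a monomorphism cancellation from Proposition~\ref{DCint}. The only immaterial difference is in the uniqueness step, where the paper cancels the monomorphism $i$ while you first use $t\circ c'=f$ and then cancel $i_g$ --- a point at which uniqueness of the factorisation would in fact already suffice, so your closing remark that the monomorphism property is genuinely needed there is slightly overstated but harmless.
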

\begin{proof}
Suppose $a,b\in \C$ are such that there are $s,t\in \Su_\C$ and $i,j\in \I_\C$ for which $a\circ s\circ i=t\circ j\circ b$.  Then $s_{a\circ s}\circ i_{a\circ s}\circ i=t\circ s_{j\circ b}\circ i_{j\circ b}$, and so by uniqueness, 
\[s_{a\circ s}=t\circ s_{j\circ b}\mbox{ and }i_{a\circ s}\circ i=i_{j\circ b},\] 
so in particular, $R(s_{j\circ b})=R(s_{a\circ s})=D(i_{a\circ s})$.  

If there is $c\in \C$ for which $a\circ s=t\circ c$ and $j\circ b=c\circ i$, then we must have $s_{j\circ b}\circ i_{j\circ b}=s_c\circ i_c\circ i$, so $s_c=s_{j\circ b}$ and $i_c\circ i=i_{j\circ b}$, and so $D(i_c)=D(i_{j\circ b})$ and $R(i_c)=D(i)$.  But then $i_c\circ i=i_{j\circ b}=i_{a\circ s}\circ i$, so because $i$ is a monomorphism by Proposition \ref{DCint}, we obtain $i_c=i_{a\circ s}$.  So necessarily, $c=s_{j\circ b}\circ i_{a\circ s}$.

Next, note that whether or not such $c$ exists, $R(s_{j\circ b})=D(i_{j\circ b})=D(i_{a\circ s})$, so we can define $c=s_{j\circ b}\circ i_{a\circ s}$, and then
\[ t\circ c=t\circ  s_{j\circ b}\circ i_{a\circ s} = s_{a\circ s}\circ i_{a\circ s}=a\circ s,\]
and
\[c\circ i=s_{j\circ b}\circ i_{a\circ s}\circ i = s_{j\circ b}\circ i_{j\circ b} = j\circ b,\]
as required.
\end{proof}

As far as we know, the following has not previously been noted.   (Note that 2 above is not needed in the argument.)

\begin{pro}  \label{factiso}
Suppose a category $\C$ has $(\Su,\M)$ as a factorization system.  Then $\Su\cap \M$ is precisely the set of isomorphisms of $\C$.
\end{pro}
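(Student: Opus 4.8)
The plan is to prove the two inclusions separately. That every isomorphism of $\C$ lies in $\Su\cap\M$ is immediate from clause~1 of the definition of a factorization system, since $\Su$ and $\M$ both contain all isomorphisms; note that clause~2 (existence of factorizations) will not be used at all, only clauses~1 and~3. For the reverse inclusion, I would fix $\phi\in\Su\cap\M$, put $e=D(\phi)$ and $g=R(\phi)$, and record that the identities $e$ and $g$ are isomorphisms (each is its own inverse, as $x\circ x=x=D(x)$ for an identity $x$) and hence lie in both $\Su$ and $\M$.

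To show that $\phi$ is an isomorphism I would feed the functoriality property of clause~3 a degenerate ``square'' built from two copies of $\phi$ together with identity morphisms, arranged so that the connecting morphism it produces is forced to be a two-sided inverse of $\phi$. Concretely, apply clause~3 with the data $a:=e$, $s:=e\in\Su$, $m:=\phi\in\M$ on one side and $t:=\phi\in\Su$, $n:=g\in\M$, $b:=g$ on the other. All of the domain/range compatibilities are routine to check, and both triple composites collapse: $a\circ s\circ m=e\circ e\circ\phi=\phi=\phi\circ g\circ g=t\circ n\circ b$, so the hypothesis of clause~3 holds. Its conclusion then yields a (unique) $c$ with $a\circ s=t\circ c$ and $n\circ b=c\circ m$, that is, $\phi\circ c=e$ and $c\circ\phi=g$. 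Reading off domains from $a\circ s=t\circ c$ gives $D(c)=R(t)=R(\phi)=g$, so these two equations say precisely $\phi\circ c=D(\phi)$ and $c\circ\phi=D(c)$; hence $\phi$ is an isomorphism, and the two inclusions together give the statement.

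The only real work is the bookkeeping of domains and ranges: one must check that the chosen data satisfy every compatibility implicit in the equation $a\circ s\circ m=t\circ n\circ b$, namely $R(a)=D(s)$, $R(s)=D(m)$, $R(t)=D(n)$, $R(n)=D(b)$, together with $D(a)=D(t)$ and $R(m)=R(b)$, and then correctly read off what $a\circ s=t\circ c$ and $n\circ b=c\circ m$ assert about $c$, keeping the left-to-right composition convention in mind. Conceptually this is just the classical observation that a morphism orthogonal to itself is an isomorphism, obtained by applying the diagonal fill-in to the trivial commutative square; the only mild subtlety is translating that argument into the ``functorial'' phrasing of clause~3. I do not anticipate any genuine obstacle beyond this clerical verification.
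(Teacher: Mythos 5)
Your proof is correct and follows essentially the same route as the paper's: the reverse inclusion is immediate from clause~1, and for $\phi\in\Su\cap\M$ you apply the functoriality clause to the identity $D(\phi)\circ D(\phi)\circ\phi=\phi\circ R(\phi)\circ R(\phi)$ (with exactly the same assignments of $a,s,m,t,n,b$ as in the paper) to extract a two-sided inverse $c$. Your observation that clause~2 is never used is also made explicitly in the paper, immediately before the proposition.
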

\begin{proof}
Pick $m\in \Su\cap \M$.  Then $D(m)\circ D(m)\circ m=m\circ R(m)\circ R(m)$, and because $D(m),R(m)$ are isomorphisms, they are members of both $\Su$ and $\M$, and of course $m\in \Su$ an $m\in \M$.  Applying the functoriality property, there exists $c\in \C$ such that $D(m)\circ D(m)=m\circ c$, and $R(m)\circ R(m)=c\circ m$, that is, $m\circ c=D(m)$ and $c\circ m=R(m)=D(c)$.  So $m$ is an isomorphism.
\end{proof}

The converse holds in an IS-category in which we let $\Su=\Su_\C$ and $\M$ the monomorphisms of $\C$. 

\begin{pro}
Suppose $\C$ is an IS-category, with subcategory of monomorphisms $\M$.  Then $(\Su_\C,\M)$ is a factorization system if and only if every $m\in \Su_\C\cap \M$ is an isomorphism.
\end{pro}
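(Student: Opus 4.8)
The plan is to prove the two implications separately. The forward (``only if'') direction is a one-line consequence of Proposition~\ref{factiso}, which says that for any factorization system $(\Su,\M)$ the intersection $\Su\cap\M$ is exactly the class of isomorphisms; hence if $(\Su_\C,\M)$ is a factorization system then in particular every $m\in\Su_\C\cap\M$ is an isomorphism. All the real work is in the reverse direction.

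So one assumes every element of $\Su_\C\cap\M$ is an isomorphism and verifies the three axioms of a factorization system for $(\Su_\C,\M)$. Axioms 1 and 2 hold in any IS-category and need no hypothesis: $\Su_\C$ is a subcategory by definition and contains all isomorphisms by Proposition~\ref{isosur}, while $\M$ is a subcategory of monomorphisms and every isomorphism is a monomorphism; and the canonical factorization $a=s_a\circ i_a$ coming from (IS2) witnesses axiom~2, since $\I_\C\subseteq\M$ by Proposition~\ref{DCint}.

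The substance is the functoriality axiom, and the idea is to reduce it to Proposition~\ref{factor}, which already establishes functoriality in the case where the ``$\M$-parts'' are insertions. The crucial observation is that the hypothesis makes every monomorphism an insertion up to isomorphism: if $m\in\M$ has canonical factorization $m=s_m\circ i_m$, then $s_m$ is itself a monomorphism (from $x\circ s_m=y\circ s_m$ one gets $x\circ m=y\circ m$, hence $x=y$), so $s_m\in\Su_\C\cap\M$ is an isomorphism. Now suppose $a\circ s\circ m=t\circ n\circ b$ with $s,t\in\Su_\C$ and $m,n\in\M$. Writing $m=s_m\circ i_m$, $n=s_n\circ i_n$ and absorbing $s_m$ into $s$ and $s_n$ into $t$ (these composites are defined because $R(s)=D(m)=D(s_m)$ and $R(t)=D(n)=D(s_n)$, and $\Su_\C$ is closed under composition), the equation becomes $a\circ(s\circ s_m)\circ i_m=(t\circ s_n)\circ i_n\circ b$ with $s\circ s_m,\,t\circ s_n\in\Su_\C$ and $i_m,i_n\in\I_\C$. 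Proposition~\ref{factor} then yields a unique $c'$ with $a\circ(s\circ s_m)=(t\circ s_n)\circ c'$ and $i_n\circ b=c'\circ i_m$. Setting $c=s_n\circ c'\circ s_m^{-1}$ (well-defined once one checks the matchings $R(s_n)=D(c')$ and $R(c')=R(s_m)=D(s_m^{-1})$), the identities $s_m\circ s_m^{-1}=D(s_m)=R(s)$ and $s_m^{-1}\circ s_m\circ i_m=D(i_m)\circ i_m=i_m$ give $t\circ c=a\circ s$ and $c\circ m=s_n\circ c'\circ i_m=s_n\circ i_n\circ b=n\circ b$. Uniqueness of $c$ is immediate, since $c_1\circ m=c_2\circ m$ forces $c_1=c_2$ as $m$ is a monomorphism.

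The main obstacle is bookkeeping rather than conceptual: tracking the domain/range matchings carefully enough that $c=s_n\circ c'\circ s_m^{-1}$ is genuinely composable and satisfies both required equations. Conceptually, the only new input beyond the already-established Proposition~\ref{factor} is the single remark that the hypothesis forces $s_m$ to be invertible, which is exactly what lets one conjugate $c'$ back to the desired $c$.
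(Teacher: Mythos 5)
Your proof is correct and follows essentially the same route as the paper's: both directions rest on Proposition \ref{factiso} and Proposition \ref{factor} respectively, with the key observation that $s_m$ (and $s_n$) is a monomorphism lying in $\Su_\C$, hence an isomorphism, allowing the conjugation $c=s_n\circ c'\circ s_m^{-1}$. Your explicit verification of axioms 1 and 2 and the direct uniqueness argument via $m$ being a monomorphism are minor presentational variants of what the paper does.
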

\begin{proof}  
Supppose $\Su_\C\cap \M$ is precisely the set of isomorphisms of $\C$.  Then suppose we have $a,b\in \C$, $s,t\in \Su_\C$ and $m,n\in \M$ for which $a\circ s\circ m=t\circ n\circ b$.  Now $m=s_m\circ i_m$ and $n=s_n\circ i_n$, so this is equivalent to
$$=a\circ (s\circ s_m)\circ i_m=(t\circ s_n)\circ i_n\circ b,$$
where $s\circ s_m, t\circ s_n\in \Su_\C$.  

Now if $x\circ s_m=y\circ s_m$ for some $x,y\in \C$, then $x\circ m=y\circ m$ (on multiplying both sides by $i_m$), so $x=y$; hence $s_m\in \M$ and so it is an isomorphism with inverse $s'_m$; similarly for $s_n$.  

Now the following are equivalent for $c\in \C$:
\bi
\item  $a\circ s=t\circ c$ and $n\circ b=c\circ m$;
\item $a\circ s\circ s_m=t\circ s_n\circ s_n'\circ  c\circ s_m$ and $s_n'\circ s_n\circ i_n\circ b=s_n'\circ c\circ s_m\circ i_m$;
\item $a\circ (s\circ s_m)=(t\circ s_n)\circ c'$ and $i_n\circ b=c'\circ i_m$ where $c'=s_n'\circ c\circ s_m$ or equivalently $c=s_n\circ c'\circ s_m'$.
\ei
But by Proposition \ref{factor}, there is unique $c'$ satisfying the final pair of equations, hence unique $c$ satisfying the first pair.  

The converse follows from Proposition \ref{factiso}.
\end{proof}

From Proposition \ref{wfs}, we have the following.

\begin{cor}
Suppose $\C$ is an IS-category, with subcategory of monomorphisms $\M$.  Then $(\Su_\C,\M)$ is a factorization system if and only if $\C$ has well-founded subobjects.
\end{cor}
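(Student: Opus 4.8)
The plan is to obtain this corollary by simply chaining together two biconditionals that have already been proved, so the proof will be very short. The proposition immediately preceding the corollary asserts that, with $\M$ the subcategory of all monomorphisms of $\C$, the pair $(\Su_\C,\M)$ is a factorization system if and only if every $m\in\Su_\C\cap\M$ is an isomorphism. Proposition \ref{wfs}, on the other hand, asserts that $\C$ has well-founded subobjects if and only if every monomorphism of $\C$ that lies in $\Su_\C$ is an isomorphism. Thus the only thing that needs checking is that the side-condition appearing in the first statement and the side-condition appearing in the second describe exactly the same class of morphisms.

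First I would note that, since $\M$ is by definition the collection of \emph{all} monomorphisms of $\C$, the intersection $\Su_\C\cap\M$ is precisely the set of morphisms that are simultaneously members of $\Su_\C$ and monomorphisms of $\C$ --- that is, exactly ``the monomorphisms of $\C$ that lie in $\Su_\C$'' in the terminology of Proposition \ref{wfs}. Hence the two conditions ``every $m\in\Su_\C\cap\M$ is an isomorphism'' and ``every monomorphism of $\C$ that lies in $\Su_\C$ is an isomorphism'' are word-for-word the same assertion once this identification is made.

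Putting the pieces together then gives: $(\Su_\C,\M)$ is a factorization system $\Longleftrightarrow$ every $m\in\Su_\C\cap\M$ is an isomorphism $\Longleftrightarrow$ $\C$ has well-founded subobjects, which is the claim. I do not expect any genuine obstacle here; the only point requiring even momentary attention is the (immediate) identification of $\Su_\C\cap\M$ with the monomorphisms contained in $\Su_\C$, and that follows directly from the definition of $\M$ as the subcategory of monomorphisms. Accordingly the write-up will be a single short paragraph citing the preceding proposition and Proposition \ref{wfs}.
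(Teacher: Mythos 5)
Your proposal is correct and is exactly the argument the paper intends: the corollary is obtained by combining the preceding proposition (factorization system iff every $m\in\Su_\C\cap\M$ is an isomorphism) with Proposition \ref{wfs}, and the identification of $\Su_\C\cap\M$ with the monomorphisms lying in $\Su_\C$ is immediate from the definition of $\M$. Nothing further is needed.
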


From Proposition \ref{bal}, we obtain the following.

\begin{cor}
If $\C$ is a balanced regular IS-category, then $(\Su,\M)$ is a factorization system for $\C$, where $\Su$ and $\M$ are the epimorphisms and monomorphisms of $\C$ respectively.
\end{cor}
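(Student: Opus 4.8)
The plan is to obtain this as an immediate consequence of Proposition~\ref{bal} together with the corollary of Proposition~\ref{wfs} stated just above, by identifying the class $\Su$ of epimorphisms with the distinguished subcategory $\Su_\C$.

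First I would apply Proposition~\ref{bal}. Since $\C$ is by hypothesis both regular and balanced, condition (1) of that proposition holds, hence so does condition (2): $\Su_\C$ is precisely the class of epimorphisms of $\C$ (so $\Su_\C=\Su$), and $\C$ has well-founded subobjects. This single step translates the two hypotheses \emph{regular} and \emph{balanced} into exactly the data needed below.

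Next I would invoke the corollary of Proposition~\ref{wfs} displayed immediately before the present statement: for an IS-category $\C$ with subcategory of monomorphisms $\M$, the pair $(\Su_\C,\M)$ is a factorization system if and only if $\C$ has well-founded subobjects. As the previous step furnished well-founded subobjects, $(\Su_\C,\M)$ is a factorization system, and since $\Su_\C=\Su$ this says precisely that $(\Su,\M)$ is a factorization system for $\C$.

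There is essentially no obstacle here; the only routine point to note is that $\Su$ (the epimorphisms) and $\M$ (the monomorphisms) are genuine subcategories containing all isomorphisms of $\C$, which holds because each class is closed under composition, contains every identity, and contains every isomorphism. All the real content is carried by Propositions~\ref{bal} and~\ref{wfs}, whose role is to convert the abstract hypotheses into the concrete conditions (coincidence of $\Su_\C$ with the epimorphisms, together with well-founded subobjects) required to apply the preceding corollary.
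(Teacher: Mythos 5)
Your argument is correct and is exactly the chain the paper intends: Proposition~\ref{bal} converts the hypotheses \emph{balanced} and \emph{regular} into the identification $\Su_\C=\Su$ (the epimorphisms) together with well-founded subobjects, and the corollary to Proposition~\ref{wfs} then yields that $(\Su_\C,\M)=(\Su,\M)$ is a factorization system. Nothing further is needed.
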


\section{IS-categories are canonical extensions of constellations with range}  \label{corresp2}

We saw in the previous section that IS-categories provide a useful enhancement to the category concept, helping to explain the observed behaviour of familiar concrete categories.  Of course, the definition of IS-categories was inspired by the behaviour of $C(\Q)$ where $\Q$ is a constellation with range.  In fact this connection proves to be very tight. 

\begin{thm}  \label{ISisCQ}
If $\C$ is a (regular) IS-category, then it is an ordered category under its I-order, $\Su_\C$ is a (left cancellative) ordered category with restrictions, and viewing the latter as a (left cancellative) constellation with range $\Su'_\C$,  the mapping $\rho:\C\rightarrow \Su'_\C$ for which $a\rho=s_a$ is a canonical radiant.  Moreover, there is an isomorphism of categories $\psi:\C\rightarrow C(\Su'_\C)$ in which $\Su_\C\psi=C_S(\Su'_\C)$ and $\I_\C\psi=C_I(\Su'_\C)$.  
\end{thm}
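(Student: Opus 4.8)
The plan is to reduce everything to machinery already in place: Proposition~\ref{Icatordered} (via Proposition~\ref{ISisI}) for the order on $\C$, Theorem~\ref{corresp} for the passage between ordered categories with restrictions and constellations with range, and Proposition~\ref{canonrad} for the canonical-extension isomorphism once a canonical radiant is produced. First, that $\C$ is an ordered category under $\leq_I$ is exactly Proposition~\ref{Icatordered}. For $\Su_\C$: it is a subcategory of $\C$, and $\leq_I$ restricted to it is a partial order inheriting (O1), (O2), (O2$'$) from $\C$, so it is an ordered constellation with range; what needs proof is (O3). Given $s\in\Su_\C$ and $e\in D(\C)$ with $e\leq_I D(s)$, so that $i_{e,D(s)}\in\I_\C$ exists, I would set $e|s:=s_{\,i_{e,D(s)}\circ s}$ and check: $D(e|s)=D(i_{e,D(s)})=e$; that $e|s\leq_I s$, witnessed by writing $i_{e,D(s)}\circ s$ as $(e|s)$ composed with an element of $\I_\C$; and uniqueness of an element of $\Su_\C$ below $s$ with domain $e$ — if $y\in\Su_\C$, $D(y)=e$, $y\leq_I s$, a witnessing equality $y\circ i=j\circ s$ forces $j=i_{e,D(s)}$ by (IS1), hence $y\circ i=i_{e,D(s)}\circ s$, and then $y=s_{y\circ i}=e|s$ by uniqueness of the IS-factorisation (since $y\in\Su_\C$, $i\in\I_\C$). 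Thus $\Su_\C$ is an ordered category with restrictions; when $\C$ is regular, $\Su_\C$ is left cancellative by Corollary~\ref{reglc}, and so the associated $\Su'_\C$ is a left cancellative constellation with range by Theorem~\ref{corresp}.

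Next I would show $\rho\colon a\mapsto s_a$ is a radiant from $\C$ (as a constellation) to the constellation with range $\Su'_\C$, whose product is $s\cdot t=s\circ(R(s)|t)$ whenever $R(s)\leq_I D(t)$ and whose natural order is $\leq_I$. Since $\rho$ is the identity on $\Su_\C$ it is surjective, and $D(a\rho)=D(s_a)=D(a)=s_{D(a)}=D(a)\rho$. The key computation is product-preservation: if $a\circ b$ exists, factoring $a=s_a\circ i_a$, $b=s_b\circ i_b$, the composite $i_a\circ s_b$ exists (as $R(i_a)=R(a)=D(b)=D(s_b)$), and regrouping
\[
a\circ b=(s_a\circ s_{i_a\circ s_b})\circ(i_{i_a\circ s_b}\circ i_b)
\]
gives $s_{a\circ b}=s_a\circ s_{i_a\circ s_b}$ by uniqueness of the IS-factorisation; separately $R(s_a)=D(i_a)\leq_I R(i_a)=D(s_b)$ so $s_a\cdot s_b$ exists, and since $i_{R(s_a),D(s_b)}=i_a$ by (IS1), the restriction $R(s_a)|s_b$ equals $s_{i_a\circ s_b}$, whence $s_a\cdot s_b=s_a\circ s_{i_a\circ s_b}=s_{a\circ b}=(a\circ b)\rho$.

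It then remains to check that $\rho$ is \emph{canonical}. Fullness is easy using surjectivity: if $s_a\cdot s_b$ exists then $R(s_a)\leq_I D(s_b)$, and $a':=s_a\circ i_{R(s_a),D(s_b)}$, $b':=s_b$ satisfy $a'\rho=s_a$, $b'\rho=s_b$ (the first because a surjection followed by an insertion has that surjection as its $\Su$-part) and $R(a')=D(s_b)=D(b')$, so $a'\circ b'$ exists. For the two remaining conditions: $s_a=s_b$ gives $D(a)=D(s_a)=D(s_b)=D(b)$; and if moreover $R(a)=R(b)$ then, since $i_a$ is the unique member of $\I_\C$ with domain $R(s_a)$ and range $R(a)$, (IS1) forces $i_a=i_b$, so $a=s_a\circ i_a=s_b\circ i_b=b$. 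By Proposition~\ref{canonrad} this gives $\C\cong C(\Su'_\C)$ via $\psi\colon s\mapsto(s\rho,R(s)\rho)=(s_s,R(s))$ (using $R(s)\rho=s_{R(s)}=R(s)$ as $R(s)\in D(\C)\subseteq\Su_\C$). Finally I identify images: for $s\in\Su_\C$, $\psi(s)=(s,R(s))$, so $\Su_\C\psi=C_S(\Su'_\C)$ by definition of $C_S$; for $i\in\I_\C$, $s_i=D(i)$ (from $i=D(i)\circ i$ with $D(i)\in\Su_\C$, $i\in\I_\C$), so $\psi(i)=(D(i),R(i))$, while $C_I(\Su'_\C)=\{(e,f):e,f\in D(\C),\ e\leq_I f\}$ since $e\cdot f$ exists in $\Su'_\C$ exactly when $e\leq_I f$; these two sets agree, surjectivity onto $C_I(\Su'_\C)$ being supplied by $(e,f)\mapsto i_{e,f}$. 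Hence $\I_\C\psi=C_I(\Su'_\C)$.

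I expect the main obstacle to be the bookkeeping in the middle step: both identities $e|s=s_{i_{e,D(s)}\circ s}$ and $s_a\cdot s_b=s_{a\circ b}$ must be extracted by repeated appeal to the uniqueness clause of (IS2) together with (IS1), and one must keep straight that the constellation product in $\Su'_\C$ is defined through the restriction operation and the I-order rather than directly through composition in $\C$, so that the non-trivial point is precisely that this product coincides with the surjective part of the $\C$-composite. Once those two identities are in hand, the radiant, canonicity, and image computations are formal.
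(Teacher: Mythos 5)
Your proposal is correct and follows essentially the same route as the paper: Proposition~\ref{Icatordered} for the I-order, the explicit restriction $e|s=s_{\,i_{e,D(s)}\circ s}$ (which is exactly the paper's $s_b$ for $b=i\circ s$) to verify (O3), the regrouped IS-factorisation of $a\circ b$ to get $(a\circ b)\rho=s_a\cdot s_b$, and Proposition~\ref{canonrad} to conclude. The only difference is organisational — you record the restriction formula once and reuse it in the radiant computation, where the paper re-derives the identification $R(s_a)|s_b=s_{i_a\circ s_b}$ on the spot — so there is nothing substantive to add.
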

\begin{proof}
We have already seen in Proposition \ref{Icatordered} that $\C$ is an ordered category under its I-order $\leq_I$, and therefore so is $\Su_\C$ under this I-order inherited from $\C$. 

We must show that (O3) holds. To this end, suppose $s\in \Su_\C$, $e\in D(\Su_\C)$ and $e\leq D(s)$, so $e\circ j=i\circ D(s)$ for some $i,j\in \I_\C$, giving that $b=i\circ s$ exists.  But $s_b\circ i_b=i\circ s$, so $s_b\leq_I s$, and $D(s_b)=D(i)=D(i\circ D(s))=D(i)=e$.  Suppose $u\in \Su_\C$ is such that $u\leq_I s$ and $D(u)=e$.  Then there are $i_1,j_1\in \I_\C$ for which $u\circ i_1=j_1\circ s$, which forces $D(j_1)=e=D(i)$.  But  also, $R(j_1)=D(s)=R(i)$, so $j_1=i$ by uniqueness. Then $u\circ i_1=b=s_b\circ i_b$ so $u=s_b,i_1=i_b$ by uniqueness.  So (O3) holds, and so $\Su_\C$ is an ordered category with restrictions, and hence may be viewed as a constellation with range $\Su'_\C$ as in Theorem \ref{corresp}.   If $\C$ is regular then $\Su_\C$ is left cancellative by definition, whence by Theorem~\ref{corresp} so is $\Su'_\C$ as a constellation with range.

Next we show $\rho$ defined above is a canonical radiant.  First, pick $a,b\in \C$ such that $a\circ b$ exists.  Then $a\rho=s_a$ and $b\rho=s_b$.  Now $R(a\rho)=R(s_a)=D(i_a)\leq R(i_a)=R(a)=D(b)=D(s_b)=D(b\rho)$, so $a\rho\cdot b\rho=s_a\cdot s_b$ exists.  Letting $c=i_a\circ s_b$, we have 
$$(a\circ b)\rho=(s_a\circ i_a\circ s_b\circ i_b)\rho=(s_a\circ s_c\circ i_c\circ i_b)\rho=s_a\circ s_c$$ 
by uniqueness.  Now $R(s_a)\leq D(s_b)$ and then $R(s_a)|s_b=v\in \Su_\C$ where $v\leq_I s_b$, so $v\circ j=i\circ s_b$ for some $i,j\in \I_\C$ where $D(v)=R(s_a)$.  Since $D(i)=R(s_a)=D(i_a)$ and $R(i)=D(s_b)=R(i_a)$, uniqueness implies that $i=i_a$.  
So $v\circ j=i_a\circ s_b=s_c\circ i_c$ and by uniqueness again, $v=s_c$ (and $j=i_c$), so that
$$a\rho\cdot b\rho=s_a\cdot s_b=s_a\circ (R(s_a)|s_b)=s_a\circ s_c=(a\circ b)\rho.$$
Also, for $e\in D(\C)$, since $e\circ e=e$, $e\rho=e$; hence $\rho$ separates projections and so certainly respects $D$ and indeed $R$. Thus $\rho$ is a radiant.

Next, we show that $\rho$ is canonical. Suppose $a,b\in \C$ are such that $(a\rho)\cdot (b\rho)$ exists and is in the image of $\rho$, and $a=(a\rho)\circ i_a$, $b=(b\rho)\circ i_b$.  Because $(a\rho)\cdot (b\rho)$ exists, $R(a\rho)\leq D(b\rho)=D(b)$ from Proposition $3.5$.  Given the partial order is the $I$-order, let  $k=i_{R(a\rho),D(b)}$ be the unique member of $\I_\C$ such that $D(k)=R(a\rho)$ and $R(k)=D(b)$ as in Definition \ref{catinsertdef}, and let $c=(a\rho)\circ k$.  Then $c\rho=a\rho$ and $c\circ b$ exists.  So $\rho$ is full.

Next note that if $x,y\in \C$ are such that $x\rho=y\rho$ then $x=(x\rho)\circ i_x,y=(x\rho)\circ i_y$.  Clearly then $D(x)=D(y)$ and $D(i_x)=D(i_y)$.  If also $R(x)=R(y)$ then $R(i_x)=R(i_y)$, so by uniqueness, $i_x=i_y$, and hence $x=y$.  Finally, $\rho$ is surjective since for $s\in \Su_\C$, $s=s\circ R(s)$ and so $s\rho=s$. This finishes the argument that $\rho$ is a canonical radiant.

By Proposition \ref{canonrad} and  the above observation that $\rho$ fixes members of $D(\C)$ we now have that $\C\cong C(\Su_\C)$ under the isomorphism $\psi: a\mapsto (a\rho,R(a)\rho)=(a\rho,R(a))$.
Moreover, for $s\in \Su_\C$, because $s=s\circ R(s)$ and $s\in \Su_\C$ with $R(s)\in \I_\C$, by uniqueness we have $s\rho=s$, and so $s\psi=(s,R(s))\in C_S(\Su'_\C)$, that is, $\Su_\C\psi=\C_S(\Su'_\C)$.  Similarly, for $i\in \I_\C$, because $i=D(i)\circ i$ and $D(i)\in \Su_\C$ and $i\in \I_\C$, by uniqueness we have that $i\rho=D(i)$, and so $i\psi=(D(i),R(i))\in C_I(\Su'_\C)$.  Hence $\I_\C\psi\subseteq C_I(\Su'_\C)$.  Conversely, if $(e,f)\in C_I(\Su'_\C)$ where $e,f\in D(\Su'_\C)$ with $e\leq_I f$ under the I-order inherited from $\C$, there is $i\in \I_\C$ such that $D(i)=e$ and $R(i)=f$, so from what was just shown, we have $i\psi=(D(i),R(i))=(e,f)$, and so in fact $\I_\C\psi=C_I(\Su'_\C)$.
\end{proof}  

It follows from Theorem \ref{ISisCQ} that working in an IS-category is equivalent to working in a category of the form $C(\Q)$ where $\Q$ is a constellation with range.  

We remark that the unique factorisation of every element of an IS-category $\C$ into an element of $\Su_{\C}$ followed by an element of $\I_\C$ implies that $\C$ is an (internal) Zappa-Sz\'{e}p product of the subcategories $\Su_{\C}$ and $\I_{\C}$ in the sense of \cite{brin}.  Hence there are suitable (partial) actions of $\Su_{\C}$ on $\I_{\C}$ and vice versa, determined by the fact that any product $i\circ s$ of $i\in \I_{\C}$ and $s\in \Su_{\C}$ may be written as $s'\circ i'$ for unique choices of $i'\in \I_{\C}$ and $s'\in \Su_{\C}$. It follows that a copy of $\C$ may be re-constructed as pairs $(s,i)$, $s\in \Su_{\C}$ and $i\in \I_{\C}$ where $R(s)=D(i)$ as in Lemma $3.9$ in \cite{brin}, and then multiplication of these pairs is determined by the mutual actions. Expressing things in terms of the canonical extension of a constellation with range $\Pc$, all of this could be couched directly in terms of $\Pc$ itself, showing how $C(\Pc)$ can equivalently be defined as an external Zappa-Sz\'{e}p product of $\Pc$ viewed as an ordered category with restrictions and the thin category determined by $D(\Pc)$.

We next  show that the categories of constellations with range and IS-categories are equivalent, providing morphisms are defined appropriately.  For  constellations with range the appropriate notion of morphism is  that of range radiant, as in Definition~\ref{defn:rangeradiant}; we call the resulting category $\R$.

\begin{dfn}
Suppose $\C_1,\C_2$ are IS-categories.  We say a functor $F:\C_1\rightarrow \C_2$ is an {\em IS-functor} if $\I_{\C_1}F\subseteq \I_{\C_2}$ and $\Su_{\C_1}F\subseteq \Su_{\C_2}$.
\end{dfn}

Any IS-functor is order-preserving with respect to the I-orders by Proposition \ref{Ipres}.  

IS-functors are rather common.  Any of the forgetful functors taking the familiar IS-categories of mathematics to the IS-category of sets are examples of IS-functors, as is immediate, and indeed the free functors left adjoint to these are also IS-functors.  

It is clear that the identity map on an IS-category is an IS-functor, and the composition of two IS-functors is an IS-functor. We call the resulting category of IS-categories $\IS$.

\begin{lem}  \label{op}
Suppose $\Pc,\Q$ are constellations with range, and $\rho: \Pc\rightarrow \Q$ is a range radiant.  Then the mapping $F_{\rho}: C( \Pc)\rightarrow C(\Q)$ given by $(s,e)\mapsto (s\rho,e\rho)$ is an IS-functor.
\end{lem}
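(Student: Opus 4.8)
The plan is to split the verification into three tasks: that $F_\rho$ is well-defined, that it is a functor, and that it carries $\I_{C(\Pc)}$ into $\I_{C(\Q)}$ and $\Su_{C(\Pc)}$ into $\Su_{C(\Q)}$. Throughout I will invoke Corollary~\ref{extIS}: since $\Pc$ and $\Q$ are constellations with range, $C(\Pc)$ and $C(\Q)$ are IS-categories with $\I_{C(\Pc)}=C_I(\Pc)$, $\Su_{C(\Pc)}=C_S(\Pc)$, and likewise for $\Q$. So the three tasks together say exactly that $F_\rho$ is an IS-functor.

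For well-definedness, I would take $(s,e)\in C(\Pc)$, so $e\in D(\Pc)$ and $s\cdot e$ exists, and note that $\rho$ being a radiant forces $e\rho=D(e)\rho=D(e\rho)\in D(\Q)$ and forces $(s\rho)\cdot(e\rho)$ to exist, so $(s\rho,e\rho)\in C(\Q)$. For functoriality, preservation of $D$ and $R$ drops straight out of the formulas $D((s,e))=(D(s),D(s))$ and $R((s,e))=(e,e)$ together with the radiant identity $D(x)\rho=D(x\rho)$. For composition, if $(s,e)\circ(t,f)$ is defined then $e=D(t)$; Result~\ref{2p3} then gives that $s\cdot t$ exists, and applying $\rho$ to $(s,e)\circ(t,f)=(s\cdot t,f)$ and comparing with $(s\rho,e\rho)\circ(t\rho,f\rho)$ (which is defined since $e\rho=D(t)\rho=D(t\rho)$) yields the required equality via $(s\cdot t)\rho=(s\rho)\cdot(t\rho)$.

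The two subcategory inclusions are where any remaining content lies, though even here it is light. That $\I_{C(\Pc)}F_\rho\subseteq\I_{C(\Q)}$ is immediate: an element $(e,f)$ of $C_I(\Pc)$ has $e,f\in D(\Pc)$ with $e\cdot f$ defined, so $F_\rho$ sends it to $(e\rho,f\rho)$ with $e\rho,f\rho\in D(\Q)$ and $(e\rho)\cdot(f\rho)$ defined, that is, to a member of $C_I(\Q)$. The surjection inclusion $\Su_{C(\Pc)}F_\rho\subseteq\Su_{C(\Q)}$ is the one step that genuinely uses the hypothesis that $\rho$ is a \emph{range} radiant and not merely a radiant: a member of $C_S(\Pc)$ has the form $(s,R(s))$, and $F_\rho$ sends it to $(s\rho,R(s)\rho)=(s\rho,R(s\rho))$, which lies in $C_S(\Q)$ precisely because $R(s)\rho=R(s\rho)$. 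I expect this identity to be the only point that needs any thought; everything else is a direct unwinding of the definition of $C(-)$ and of radiant, and no real obstacle presents itself.
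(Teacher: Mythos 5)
Your proof is correct and follows essentially the same route as the paper: the two subcategory inclusions are handled identically, with the range radiant identity $R(s)\rho=R(s\rho)$ doing the work for $C_S$. The only difference is that the paper simply cites Proposition 3.3 of \cite{constgen} for the fact that $F_\rho$ is a well-defined functor, whereas you verify this directly; your verification is sound.
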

\begin{proof} 
Let $\rho: \Pc\rightarrow \Q$ be a range radiant. By Proposition $3.3$ of \cite{constgen}, $F_{\rho}$ is a functor.  
%And for $(s,e),(t,f)\in C( \Pc)$ for which $(s,e)\leq (t,f)$ under the I-order, we have $s\leq t, e\leq f$ in $ \Pc$, and so $, 
For $(e,f)\in C_I( \Pc)$, we have $(e,f)F_{\rho}=(e\rho,f\rho)\in C_I(\Q)$, so $C_I( \Pc)F_{\rho}\subseteq C_I(\Q)$.  Further, for $(s,R(s))\in C_S( \Pc)$, we have $(s,R(s)) F_{\rho}=(s\rho,R(s)\rho)=(s\rho,R(s\rho))\in C_S(\Q)$.  So $F_{\rho}$ is an IS-functor.
\end{proof}

\begin{lem}  \label{op2}
Suppose $\C,\D$ are IS-categories, and $F: \C\rightarrow \D$ is an IS-functor.  Then the mapping $\rho_F: \Su'_{\C}\rightarrow \Su'_{\D}$ given by restricting $F$ to $\Su_{\C}\subseteq \C$ is a range radiant. 
\end{lem}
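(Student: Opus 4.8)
The plan is to recognise $\rho_F$ as an order-preserving functor between ordered categories with restrictions, so that Theorem~\ref{equiv} applies and immediately delivers the conclusion.

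First I would note that $\rho_F$ really is a functor $\Su_\C\to\Su_\D$: since $F$ is an IS-functor, $\Su_\C F\subseteq\Su_\D$, so $F$ restricts to $\Su_\C$, and the restriction of a functor to a subcategory whose image lands in a subcategory is again a functor. In particular $\rho_F$ respects $D$ and $R$, because $F$ does and, by Theorem~\ref{ISisCQ}, the operations $D$ and $R$ on $\Su'_\C$ (respectively $\Su'_\D$) are just the restrictions of $D$ and $R$ on $\C$ (respectively $\D$). I would also recall, again from Theorem~\ref{ISisCQ} together with Theorem~\ref{corresp}, that $\Su_\C$ is an ordered category with restrictions under the I-order $\leq_I$ inherited from $\C$, and that $\Su'_\C$ is exactly the constellation with range obtained from this ordered category with restrictions, so that its natural order is $\leq_I$ and its product is $s\cdot t=s\circ(R(s)|t)$; similarly for $\Su'_\D$.

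Next I would check that $\rho_F$ is order-preserving. By Proposition~\ref{ISisI}, $\C$ and $\D$ are I-categories, and since $F$ is an IS-functor we have $\I_\C F\subseteq\I_\D$; Proposition~\ref{Ipres} then says $F\colon\C\to\D$ is order-preserving for the respective I-orders. Restricting the source and target to $\Su_\C$ and $\Su_\D$ (equipped with the restricted I-orders, which are precisely the orders making them ordered categories with restrictions) keeps it order-preserving. Thus $\rho_F$ is an order-preserving functor between ordered categories with restrictions, and Theorem~\ref{equiv} gives that $\rho_F$ is a range radiant $\Su'_\C\to\Su'_\D$, as required.

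The only delicate point I anticipate is the bookkeeping: making sure that the order, domain and range relevant to the constellation with range $\Su'_\C$ genuinely are the restrictions to $\Su_\C$ of the I-order, $D$ and $R$ of $\C$. This is settled by Theorem~\ref{ISisCQ} in combination with Theorem~\ref{corresp} and Proposition~\ref{1to1}, so no new argument is needed. Should one prefer to avoid invoking Theorem~\ref{equiv}, the same facts give a direct verification: if $s\cdot t$ exists in $\Su'_\C$ then $R(s)\leq_I D(t)$, hence $R(sF)\leq_I D(tF)$ by order-preservation, so $(sF)\cdot(tF)$ exists in $\Su'_\D$; moreover $(R(s)|t)F\leq_I tF$ and has domain $R(sF)$, so by uniqueness of restrictions $(R(s)|t)F=R(sF)|tF$, whence $(s\cdot t)F=(s\circ(R(s)|t))F=sF\circ((R(s)|t)F)=sF\circ(R(sF)|tF)=sF\cdot tF$, while $D$ and $R$ are preserved because $F$ is a functor.
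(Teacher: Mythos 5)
Your proof is correct. The direct verification you sketch at the end is, almost word for word, the paper's own proof: it first shows $(e|s)F=(eF)|(sF)$ via order-preservation (Propositions \ref{ISisI} and \ref{Ipres}) together with uniqueness of restrictions, and then computes $(s\cdot t)\rho_F=(sF)\circ\bigl(R(sF)|(tF)\bigr)=(sF)\cdot(tF)$, with $D$ and $R$ preserved because $F$ is a functor. Your primary route --- recognising $\rho_F$ as an order-preserving functor between the ordered categories with restrictions $\Su_{\C}$ and $\Su_{\D}$ (via Theorem \ref{ISisCQ} and Propositions \ref{ISisI} and \ref{Ipres}) and then citing the morphism correspondence of Theorem \ref{equiv} --- is a legitimate shortcut that the paper does not take: the paper instead reproves the relevant half of Theorem \ref{equiv} inside Lemma \ref{op2}. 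What your packaging buys is the explicit observation that Lemma \ref{op2} is an instance of the already-established isomorphism between the category of ordered categories with restrictions and the category of constellations with range; the only cost is the bookkeeping you correctly flag, namely that the order, $D$, $R$ and restrictions on $\Su'_{\C}$ really are those inherited from $\C$, which Theorem \ref{ISisCQ} supplies.
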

\begin{proof}
First, if $s\in \Su'_{\C}$ and $e\leq D(s)$ so that $e|s\leq s$ exists in the constellation with range $\Su'_{\C}$ (where the natural order on $\Su'_{\C}$ is the inherited I-order from $\C$ by construction), then $e|s\leq_I s$ and so $(e|s)F\leq_I sF$ since $F$ is order-preserving by Propositions \ref{ISisI} and \ref{Ipres}. Then $(e|s)F\leq sF$ under the natural order on $\Su'_{\C}$ as a constellation, and so because $D((e|s)F)=D(e|s)F=eF$, it must be that $(e|s)F=(eF)|(sF)$ in $\Su'_{\C}$.

Next note that $\rho_F$ is well-defined, because $\Su'_{\C}F\subseteq \Su'_{\D}$.  Suppose $s\cdot t$ exists for $s,t\in \Su'_{\C}$.  Then $R(s)|t$ exists and $(s\cdot t)\rho_F=(s\circ (R(s)|t))F=(sF)\circ (R(s)|t)F=(sF)\circ (R(s)F)|(tF)=(sF)\circ (R(sF))|(tF)=(sF)\cdot (tF)=(s\rho_F)\cdot(t\rho_F)$. Also, $\rho_F$ respects $D$ and $R$ because $F$ did.  So $\rho_F$ is a range radiant.
\end{proof}

\begin{thm}  \label{catequiv}
The mapping $\phi$ taking any (left cancellative) constellation with range $\Q$ to its (regular) canonical extension $C(\Q)$ and any range radiant $\rho: \Pc\rightarrow \Q$ to the IS-functor $F_{\rho}:C( \Pc)\rightarrow C(\Q)$ as in Lemma \ref{op} is a functor $\R\rightarrow \IS$.

The mapping $\psi$ taking any (regular) IS-category $\C$ to the (left cancellative) constellation with range $\Su'_{\C}$ and any IS-functor $F:\C\rightarrow \D$ to its restriction $\rho_F$ to $\Su'_{\C}\rightarrow \Su'_{\D}$ as in Lemma \ref{op2} is a functor $\IS\rightarrow \R$.  

Moreover there are natural isomorphisms $\eta: I_{\R}\rightarrow\phi\psi$ and $\tau: I_{\IS}\rightarrow \psi\phi$ such that for all $\Pc\in \R$, $\eta_{\Pc}: \Pc\rightarrow C_S(\Pc)$ (the latter viewed as a constellation with range) is given by $s\eta_{\Pc}=(s,R(s))$ for all $s\in \Pc$, and
$\tau_{\C}:\C\rightarrow C(\Su'_{\C})$ is given, for all $x\in \Pc$, by $x\tau_{\C}=(s,R(x))$ where $x=s\circ i$ for $s\in \Su_{\C},i\in \I_{\C}$.  

Hence $\R$ and $\IS$ are equivalent categories, with associated functors $\phi$ and $\psi$.  
\end{thm}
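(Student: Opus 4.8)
The plan is to verify, in turn, that $\phi$ and $\psi$ are well-defined functors, that $\eta$ and $\tau$ are natural isomorphisms with the stated components, and then to invoke the standard fact that a pair of functors admitting such natural isomorphisms constitutes an equivalence. \emph{Functoriality.} First I would confirm $\phi$ is well-defined: on objects, $C(\Q)$ is an IS-category by Corollary~\ref{extIS}, and if $\Q$ is left cancellative then, by Proposition~\ref{constIScat}, $\Su_{C(\Q)}=C_S(\Q)$ is isomorphic to the derived category of $\Q$, which is left cancellative by Proposition~\ref{oneway}, so $C(\Q)$ is regular by Corollary~\ref{reglc}; on morphisms, $F_\rho$ is an IS-functor by Lemma~\ref{op}, and $F_{\rho_1\rho_2}=F_{\rho_1}F_{\rho_2}$ and $F_{\mathrm{id}_\Q}=\mathrm{id}_{C(\Q)}$ follow immediately from the formula $(s,e)\mapsto(s\rho,e\rho)$. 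Dually, $\Su'_\C$ is a (left cancellative) constellation with range by Theorem~\ref{ISisCQ}, $\rho_F$ is a range radiant by Lemma~\ref{op2}, and since $\rho_F$ is literally $F$ restricted to $\Su_\C$ it preserves composition and identities. This yields functors $\phi\colon\R\to\IS$ and $\psi\colon\IS\to\R$.

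\emph{The isomorphism $\eta$.} For $\Pc\in\R$ we have $(\phi\psi)(\Pc)=\Su'_{C(\Pc)}$ with $\Su_{C(\Pc)}=C_S(\Pc)$ by Corollary~\ref{extIS}, and I would show $\eta_\Pc\colon s\mapsto(s,R(s))$ is an isomorphism of constellations with range in two complementary ways. Abstractly: Proposition~\ref{constIScat} gives that this map is an isomorphism of ordered categories with restrictions from the derived category ${\bf C}(\Pc)$ onto $C_S(\Pc)$, and Theorem~\ref{equiv} together with the identity ${\bf Q}({\bf C}(\Pc))=\Pc$ of Proposition~\ref{1to1} turns this into an isomorphism of constellations with range $\Pc\to\Su'_{C(\Pc)}$. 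Concretely: $\eta_\Pc$ respects $D$ and $R$ since $D((s,R(s)))=(D(s),D(s))$ and $R((s,R(s)))=(R(s),R(s))$, and it preserves products, for if $s\cdot t$ exists in $\Pc$ then $R(s)\leq D(t)$ by Result~\ref{2p3} and Lemma~\ref{Rst}, so $(s,R(s))\cdot(t,R(t))$ exists in $\Su'_{C(\Pc)}$, and expanding the restriction and constellation product of Theorem~\ref{corresp} it equals $\bigl(s\cdot(R(s)\cdot t),\,R(R(s)\cdot t)\bigr)=(s\cdot t,R(s\cdot t))=(s\cdot t)\eta_\Pc$, where the middle equality is precisely the right congruence condition $R(R(s)\cdot t)=R(s\cdot t)$. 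Naturality of $\eta$ then reduces to $(s\rho,R(s\rho))=(s\rho,R(s)\rho)$ for a range radiant $\rho\colon\Pc\to\Q$, which holds by definition of a range radiant, since $(\phi\psi)(\rho)$ is $F_\rho$ restricted to $C_S(\Pc)$, namely $(s,R(s))\mapsto(s\rho,R(s)\rho)$.

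\emph{The isomorphism $\tau$.} For $\C\in\IS$, Theorem~\ref{ISisCQ} already supplies a category isomorphism $\C\to C(\Su'_\C)=(\psi\phi)(\C)$ sending $x$ to $(x\rho,R(x))$ with $x\rho=s_x$; since $x=s_x\circ i_x$ is the unique factorisation of $x$ as an element of $\Su_\C$ followed by an element of $\I_\C$, this is exactly the claimed map $\tau_\C\colon x\mapsto(s,R(x))$ with $x=s\circ i$. That theorem further shows $\Su_\C\tau_\C=C_S(\Su'_\C)=\Su_{C(\Su'_\C)}$ and $\I_\C\tau_\C=C_I(\Su'_\C)=\I_{C(\Su'_\C)}$, so $\tau_\C$ and its inverse are IS-functors, i.e.\ $\tau_\C$ is an isomorphism in $\IS$. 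For naturality with respect to an IS-functor $F\colon\C\to\D$, one composite sends $x=s_x\circ i_x$ to $(s_xF,R(x)F)=(s_xF,R(xF))$ and the other to $(s_{xF},R(xF))$; these coincide because $xF=(s_xF)\circ(i_xF)$ with $s_xF\in\Su_\D$ and $i_xF\in\I_\D$, so the uniqueness in (IS2) forces $s_{xF}=s_xF$.

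\emph{Conclusion, and the main obstacle.} Having produced natural isomorphisms $\eta\colon I_\R\to\phi\psi$ and $\tau\colon I_\IS\to\psi\phi$, the pair $(\phi,\psi)$ is by definition an equivalence, so $\R$ and $\IS$ are equivalent. The only step that is not pure bookkeeping is the product-preservation clause for $\eta_\Pc$: one must check that the constellation-with-range operation carried by $\Su'_{C(\Pc)}$ -- obtained by forming $C(\Pc)$, passing to the subcategory $C_S(\Pc)$ with the inherited I-order, and then applying the ordered-category-with-restrictions to constellation-with-range passage -- really does return the original operation on $\Pc$, and it is exactly here that the right congruence condition is used. Everything else rests on Corollary~\ref{extIS}, Theorem~\ref{ISisCQ}, and Lemmas~\ref{op} and~\ref{op2}.
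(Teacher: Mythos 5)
Your proposal is correct and follows essentially the same route as the paper: the same supporting results (Corollary~\ref{extIS}, Lemmas~\ref{op} and~\ref{op2}, Theorem~\ref{ISisCQ}, Propositions~\ref{oneway} and~\ref{constIScat}), the same components for $\eta$ and $\tau$, and the same naturality computations. The only difference is that you spell out steps the paper dismisses as routine — in particular the product-preservation check for $\eta_{\Pc}$ via the right congruence condition — which is a welcome elaboration rather than a divergence.
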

\begin{proof} It is routine to check that $\phi$ and $\psi$ are functors. Note that if $\Q$ is left cancellative, then so is its derived category by Proposition \ref{oneway}, and this is isomorphic to $C_S(\Q)$ by Proposition \ref{constIScat}, which is therefore also left cancellative, and so $C(\Q)$ is regular.
%Clearly, $\phi$ is identity-preserving: for any $\Pc\in \R$, $(1_{\Pc})\phi=1_{C(\Pc)}$.  Suppose $\rho_1:\Q_1\rightarrow \Q_2$ and $\rho_2: \Q_2\rightarrow \Q_3$ are range radiants.  Then $\rho_1\rho_2:\Q_1\rightarrow \Q_3$ exists, as does $(\rho_1\phi)\circ (\rho_2\phi)$ (since $C(\Q_2)$ is the target of the former and source of the latter), and it is easy to check that both $(\rho_1\rho_2)\phi$ and $(\rho_1\phi)\circ (\rho_2\phi)$ take $(s,e)\in C(\Q_1)$ to $(s\rho_1\rho_2,e\rho_1\rho_2)$.  So $\phi$ is indeed a functor. 
%
%Turning to $\psi$, {\color{red} 
Also note that if $\C$ is regular, then $\Su_{\C}$ is left cancellative as a category, hence as a constellation with range $\Su'_{\C}$ by Theorem \ref{corresp}.  
% Again, $\psi$} is clearly identity-preserving: for any $\C\in \IS$, $(1_{\C})\psi=1_{\Su'_{\C}}$.  If $F_1:\C_1\rightarrow \C_2$ and $F_2: \C_2\rightarrow \C_3$ exist, so that $F_1\circ F_2:\C_1\rightarrow \C_3$ exists, then $(F_1\circ F_2)\psi=\rho_{F_1\circ F_2}:\Su'_{\C_1}\rightarrow \Su'_{\C_3}$ exists. But also $\rho_{F_1}:\Su_{\C_1}\rightarrow \Su_{\C_2}$ and $\rho_{F_2}:\Su_{\C_2}\rightarrow \Su_{\C_3}$ are such that $(F_1\psi)\circ (F_2\psi)=\rho_{F_1}\circ \rho_{F_2}$, mapping $\Su_{\C_1}\rightarrow \Su_{\C_3}$, exists, and then it is clear that for all $s\in \Su'_{C_1}$, $s(\rho_{F_1}\circ \rho_{F_2})=sF_1F_2=s\rho_{F_1\circ F_2}$, and so $(F_1\psi)\circ (F_2\psi)=(F_1\circ F_2)\psi$.  So $\psi$ is also a functor.

It remains to show that $\eta,\tau$ are natural isomorphisms.  We begin with $\eta$.  First, it is straightforward to check that for $\Pc\in \R$, $\eta_{\Pc}$ is an isomorphism of constellations with range.  We prove naturality.  Suppose $\rho:\Pc\rightarrow \Q$ is a range radiant.  Then for $s\in \Pc$, 
 
$$s(\eta_{\Pc}((\rho)(\phi\psi)))=(s,R(s))((\rho)(\phi\psi))=(s\rho,R(s)\rho),$$ 
whereas
$$s(\rho\eta_{\Q})=(s\rho,R(s\rho))=(s\rho,R(s)\rho)= s(\eta_{\Pc}((\rho)(\phi\psi))).$$
Hence $\eta_{\Pc}((\rho)(\phi\psi))=(\rho I_{\R})\eta_{\Q}$, where $I_{\R}$ is the identity functor on $\R$.  So $\phi\psi$ is naturally isomorphic to $I_{\R}$.

Next we consider $\tau$.  Again, for $\C\in \IS$, that $\tau: \C\rightarrow C(\Su'_{\C})$ is an isomorphism is immediate from Theorem \ref{ISisCQ}.  For naturality, suppose $F:\C\rightarrow \D$ is an IS-functor.  Then for all $x\in \C$, since $x=s_x\circ i_x$, we have that $xF=s_xF\circ i_xF$ with $s_xF\in \Su'_{\D}$ and $i_xF\in \IS_{\D}$, and so
$$x(\tau_{\C}(F(\psi\phi)))=(s_x,R(x))(F(\psi\phi))=(s_xF,R(x)F),$$
whereas
$$x(F\tau_{\D})=(s_xF,R(xF))=(s_xF,R(x)F)=x\tau_{\C}(F(\psi\phi)).$$
So $\tau_{\C}(F(\psi\phi))=(F I_{\IS})\tau_{\D}$, where $I_{\IS}$ the identity functor on $\IS$.  So $\psi\phi$ is naturally isomorphic to $I_{\IS}$.
\end{proof}

So IS-functors between IS-categories $\C,\D$ are in essence fully determined by the induced range radiants between $\Su'_{\C}$ and $\Su'_{\D}$.

\section{Open questions}  \label{open}

As observed in Proposition \ref{CX}, the normal constellation $(\C_X,\cdot\, ,D)$ has range, with $R(f)$ equal to the identity map on the image of $f$, and indeed $(\C_X,\cdot\, ,D,R)$ is left cancellative as noted previously.  It would be of interest to axiomatize those small constellations with range that are embeddable within $\C_X$.  A finite axiomatization exists in the case of semigroups of partial functions equipped with $D$ and $R$ (see \cite{scheinDR}), the axioms having a similar form to those of left cancellative constellations with range; this result will therefore carry over to inductive constellations with suitable range.  It is tempting to conjecture that every left cancellative constellation with range is so embeddable.  

We do not know of a counterexample to the assertion that every I-category is an IS-category for the same $\I_\C$ and some choice of $\Su$. Nor do we know whether each I-category is isomorphic to one of the form $C(\Q)$ for some constellation $\Q$.

\section*{Acknowledgements}
We thank a very careful referee for some helpful comments on the presentation of our work.

\begin{tabular}{ll}
Victoria Gould&
 Tim Stokes (corresponding author)\\
Department of Mathematics&
Department of Mathematics\\
University of York&University of Waikato\\
York YO23 3LT, U.K.& Hamilton 3216, New Zealand\\
{\em  victoria.gould@york.ac.uk}&
{\em tim.stokes@waikato.ac.nz}
\end{tabular}

\end{document}